\def\Label#1{\label{#1}}
\let\phi\varphi
\def\A{{\mathbb A}}
\def\Z{{\mathbb Z}}
\def\R{{\mathbb R}}
\def\HTwo{{\mathbb H}^2}
\def\H{{\bf H}}
\def\HH{{\H}}
\def\delH{{\partial\H}}
\def\half{{1\over 2}}
\def\cal{\mathcal}
\def\cala{{\cal A}}
\def\calg{{\cal G}}
\def\calo{{\cal O}}
\def\cald{{\cal D}}
\def\calp{{\cal P}}
\def\qed{{\hfill$\Box$}}
\def\dehn{Cannon's\ }
\def\dehnMachine{Dehn machine}
\newtheorem{theorem}{Theorem}[section]
\newtheorem{lemma}[theorem]{Lemma}
\newtheorem{proposition}[theorem]{Proposition}
\newtheorem{corollary}[theorem]{Corollary}
\newtheorem{definition}[theorem]{Definition}
\newtheorem{question}[theorem]{Question}
\newtheorem{remark}[theorem]{Remark}
\newtheorem{review}[theorem]{Basic properties}
\def\classno#1{}
\title{On a generalization of Dehn's algorithm}
\author{Oliver Goodman}
\author{Michael Shapiro}
\address{
  Department of Mathematics and Statistics,
  University of Melbourne,
  Parkville, Victoria 3052,
  Australia}
\address{
  Department of Pathology,
  Tufts University,
  Boston, MA, 
  USA}
\thanks{Michael Shapiro wishes to thank David Thorley-Lawson for
support from Public Health Service grant (RO1 AI062989).}
\date{\today}
\begin{document}

\begin{abstract}
  Viewing Dehn's algorithm as a rewriting system, we generalise to
  allow an alphabet containing letters which do not necessarily
  represent group elements. This extends the class of groups for which
  the algorithm solves the word problem to include nilpotent groups,
  many relatively hyperbolic groups including geometrically finite
  groups and fundamental groups of certain geometrically decomposable
  manifolds. The class has several nice closure properties. We also
  show that if a group has an infinite subgroup and one of exponential
  growth, and they commute, then it does not admit such an algorithm.
  We dub these \dehn algorithms.
\end{abstract}

\maketitle

\def\lh{left-hand}
\def\rh{right-hand}
\def\lhs{left-hand side}
\def\rhs{right-hand side}

\section{Introduction}

\subsection{Dehn's algorithm}
Early last century Dehn \cite{D} introduced three problems.  We know
them now as the word problem, the conjugacy problem and the
isomorphism problem.  Given a finitely generated group $G$ and
generating set $\calg$, we have solved the word problem if we can give
a procedure which determines, for each word $w\in\calg^*$ whether or
not $w$ represents the identity.  We have solved the conjugacy problem
if we can give a procedure which determines, for each pair of words
$u, v \in \calg^*$,
whether they represent elements which are conjugate in $G$.  For the
isomorphism problem, Dehn invites us to develop procedures for
determining if two given groups are isomorphic.

Using hyperbolic geometry Dehn proceeded to solve the word and
conjugacy problems for the fundamental groups of closed hyperbolic
surfaces.  Let us take a moment to describe his solution of the word
problem.  For specificity, let us take the two-holed surface group
\[\langle x_1,y_1,x_2,y_2 \mid [x_1,y_1][x_2,y_2]\rangle.\] The Cayley
graph of this group sits in $\HTwo$ as the 1-skeleton of the
tessellation of $\HTwo$ by regular hyperbolic octagons, and the relator
$R=[x_1,y_1][x_2,y_2]$ labels the boundary of each octagon.  A word
$w$ now lies along the boundaries of these octagons and is a closed
curve if and only if it represents the identity.  Dehn then shows that
any reduced closed curve travels around the far side of some
``outermost'' octagon and in doing so contains at least 5 of its 8
edges.  That is, each reduced word representing the identity contains
more than half of a relator.  (Here we are allowing cyclic
permutations of $R$ and $R^{-1}$.)

This solves the word problem, for we can decompose the relator as
$uv^{-1}$ where $u$ appears in $w=xuy$ and $u$ is longer than $v$.
 This allows us to replace $w$ with the shorter word $w'=xvy$.  If
the word $w$ represents the identity and $w'$ is not empty, we can
again shorten $w'$ in similar manner.  This process either ends with
a non-empty word which we cannot shorten, in which case $w$ did not
represent the identity, or with the empty word in which case $w$ did
represent the identity.

Accordingly, we say the the group $G$ has a {\em Dehn's algorithm} if
it has a finite presentation 
\[\langle \calg \mid \cald \rangle\]
such that every word $w \in \calg^*$ representing the identity
contains more than half of some relator in $\cald$.  Equivalently, we
could write $\cald$ as a finite set of relations $u_i = v_i$ so that
for each $i$, $\ell(u_i) > \ell(v_i)$ and every word $w\in\calg^*$
representing the identity contains some $u_i$.

It is a theorem \cite{L}  \cite{C1} \cite{ABC+} that a group has such a
Dehn's algorithm if and only if it is one of those groups which are
variously called Gromov hyperbolic, hyperbolic, negatively curved or
word hyperbolic.

\subsection{A new definition}
Cannon \cite{C2} suggested we take the following viewpoint.  We have a
class of machines designed to carry out\footnote{\ Morally, Dehn's
algorithm represents a linear time solution to the word problem, but
this actually depends on the machine implementation.  If it is
implemented on a classical one-tape Turing machine, the running time
is $\calo(n^2)$ due to the need to exorcize (or traverse) the blanks
left by each replacement.  If it is implemented on a random access
machine, it is $\calo(n\log n)$ due to the size of the words needed to
indicate addresses.  If it is implemented out on a multi-tape machine
it is $\calo(n)$ since here blanks ``evaporate'' between the tapes
\cite{DA}.  Recently, \cite{Ho} has shown that there is a real-time
multi-tape implementation.} Dehn's algorithm.  Such a machine would be
equipped with a finite set of length reducing replacement rules $u_i
\to v_i$.  It would have a window of finite width through which it
would examine a given word.  This window would start at the beginning
of the word.  As the window moved along, the machine would scan the
word looking for occurrences of $u_i$'s.  If it fails to find any
$u_i$ and is not already at the end of the word, it moves forward.  If
it finds a $u_i$ it replaces it with the corresponding $v_i$.  (The
blank spaces magically evaporate.)  The window then moves backwards
one letter less than the length of the longest $u_i$ or to the
beginning of the word if that is closer.  It accepts a word if and
only if it succeeds in reducing that word to the empty word.

The key difference here is that our working alphabet is no longer
restricted to the group generators.  We shall see that there are
several different classes of machines here with some rather
divergent properties.  We do not know if these competing definitions
for the title of ``\dehnMachine'' yield different classes of groups.
Our most restrictive version solves the word problem in a much larger
class of groups than the word hyperbolic groups.

We describe these classes of machines in terms of rewritings that
they carry out.  In each of these, we are supplied with an alphabet
$\A$ and a finite set of pairs $(u_i, v_i) \in \A^*\times\A^*$ where
for each $i$, $\ell(u_i) > \ell(v_i)$.  We call these {\em rewriting
rules} and write $u_i\to v_i$.  We call $u_i$ and $v_i$ the {\em
left-hand side} and the {\em right-hand side} respectively. For
technical reasons we also have to allow the machines to have {\em
anchored} rules: these are rules which only apply when the left-hand
side is an initial segment of the current word. We write \verb'^'$u$ for
the left-hand side of an anchored rule and consider $u$ and \verb'^'$u$ to
be distinct.

\def\ira{incremental rewriting algorithm}

Let $S$ be a finite set of rewriting rules such that each \lh\ side
appears at most once. We say that $w\in\A^*$ is {\em reduced} with
respect to $S$ if it contains none of the \lh\ sides in $S$.  The
following algorithm, which we call the {\em \ira}\ given by $(\A,S)$,
replaces any $w\in\A^*$ by a reduced word in finitely many steps. If
$w$ contains a \lh\ side, find one which ends closest to the start of
$w$; if several end at the same letter, choose the longest; if
possible, choose
an anchored one in preference to a non-anchored one of the same
length. Replace it
by the corresponding  \rh\ side.  Repeat until $w$ is
reduced.

\def\nira{non-incremental rewriting algorithm}

Here is a slightly different definition: the {\em \nira}\ given by
$(\A,S)$, replaces any $w\in\A^*$ by a reduced word in finitely many
steps. Here $S$ may also include end-anchored rules, with left-hand
side $u$\verb'^', and rules anchored at both ends. 
If $w$ contains a \lh\ side, find one which starts closest to
the start of $w$; if several start at the same letter, choose the
longest; prefer anchored rules when there is a choice.  
Replace it by the corresponding  \rh\ side.  Repeat
until $w$ is reduced.

Each of these algorithms gives a reduction map $R=R_S:\A^*\to\A^*$
where $R(w)$ is the reduced word which the algorithm produces
starting with $w$.  The \ira\ gets its name from the following
property: if $R$ is the reduction map of an \ira, then
$R(uv)=R(R(u)v)$. 

We may wish to apply an \ira\ only to words in $\A_0^*$ where
$\A_0\subseteq\A$. We then refer to $\A_0$ as the {\em input} alphabet
and $\A$ as the {\em working} alphabet. The algorithm can then be
given as a triple $(\A_0, \A, S)$.  We say that $\{ w \in \A_0^* \mid
R(w) {\rm ~is~empty}\}$ is the {\em language} of this triple.  The
same can be done for \nira s\begin{footnote}{Since this work first
appeared in preprint form, Mark Kambites and Friedrich Otto \cite{KO}
have shown that the \ira \ lanaguages are contained in the set of
Chrurch-Rosser languages and that a language is a \nira\ language if
and only if it is a Church-Rosser language.}\end{footnote}.

Clearly Dehn's Algorithm can be implemented as an \ira, with $\A_0 =
\A = \calg$ and $S$ obtained from the $u_i$. We generalize this as
follows. (See Section~\ref{endo} for the example which originally
motivated this definition.)

\begin{definition}
A group $G$, with semi-group generators $\calg$, has a \dehn algorithm if
there exists an alphabet $\A\supseteq\calg$, and set of rewriting rules
$S$ over $\A$, such that the \ira\ reduces $g\in \calg^*$ to the empty
word, if and only if $g$ represents the identity in $G$. 
\end{definition}

We have chosen \ira s because of their nice group theoretic
properties.  Using \ira s in the previous definition ensures that the
\dehn algorithm remembers group elements.  That is, if $G$ has a
\dehn algorithm with input alphabet $\calg$ and reduction map $R$,
and there are $x$ and $y$ in $\calg^*$ so that $R(x)=R(y)$, then $x$
and $y$ denote the same element of $G$.  This property does not hold
in general if one uses \nira s.

On the other hand, \nira s have nice language theoretic properties in
that they support composition.  In the following, we will conceal some
technical details in the word ``mimics''.  One can imagine the \nira\ 
as being carried out by a machine with a finite number of internal
states $s_i$ and a list of rewriting rules $S_i$ for each state
$s_i$.  There is a \nira\ which mimics the action of this multi-state
machine.  Consequently, given two \nira s over the same alphabet $\A$
with reduction maps $Q$ and $R$, there is a \nira\ which mimics a
\nira\ whose reduction map is $R\circ Q$.  We will refer to a \dehn
algorithm carried out using a \nira\ as a non-incremental \dehn
algorithm. 

\subsection{Results}
Before describing our results, we note that many of these were
independently rediscovered by Mark Kambites and Friedrich Otto
\cite{K}.  We show here that groups with \dehn algorithms have the
following closure properties:
\begin{enumerate}
\item{} If $G$ has a \dehn algorithm over one finite generating set then
it has a \dehn algorithm over any finite generating set.
\item{} If $G$ has a \dehn algorithm and $G$ is a finite index
subgroup of $H$ then $H$ has a \dehn algorithm.
\item{} If $G$ and $H$ have \dehn algorithms, then $G*H$ has a \dehn
algorithm. 
\item{} If $G$ has a \dehn algorithm and $H$ is a finitely generated
subgroup of $G$ then $H$ has a \dehn algorithm.
\end{enumerate}

This last closure property significantly increases the class of groups
with \dehn algorithms.  Every word hyperbolic group has a \dehn
algorithm, and as Bridson and Miller have pointed out to us, the
finitely generated subgroups of word hyperbolic groups include groups
which are not finitely presented and groups with unsolvable conjugacy
problem \cite{Subgps}.

We also show that groups with \dehn algorithms include
\begin{enumerate}
\item{} finitely generated nilpotent groups, 
\item{} many relatively hyperbolic groups including geometrically
  finite hyperbolic groups, and fundamental groups of graph manifolds
  all of whose pieces are hyperbolic. 
\end{enumerate}

We prove the first of these by means of expanding endomorphisms.  The parade
example of an expanding endomorphism is the endomorphism of the integers $n
\mapsto 10n$.  The facts that this map makes everything larger and
that its image is finite index combine to give us decimal notation.
Our \dehn algorithms for nilpotent groups consist of this sort of
decimalization together with cancellation.  We are then able to
combine these methods with the usual word hyperbolic \dehn
algorithms to produce the second class of results.

We are also able to prove that many groups do not have \dehn
algorithms.  We have the following criterion: suppose $G$ has two
subsets, $S_1$ and $S_2$ and that both of these are infinite and the
growth of $S_2$ is exponential.  Suppose also that these two sets
commute.  Then $G$ does not have a \dehn algorithm.  This allows us
to rule out many classes of groups including Baumslag-Solitar groups,
braid groups, Thompson's group, solvegeometry groups and the
fundamental groups of most Seifert fibered spaces.  In particular, we
are able to say exactly which graph manifolds have fundamental groups
which have \dehn algorithms.

We have discussed \dehn algorithms which are carried out by \ira s and
\nira s.  They can also be carried out non-deterministically.  Given a
finite set of length reducing rewriting rules, these solve the word
problem nondeterministically if for each word $w$, $w$ represents the
identity if and only if it can be rewritten to the empty word by some
application of these rules.  All of these competing versions are
closely related to the family of {\em growing context sensitive
languages}.  A growing context-sensitive grammar is one in which all
the productions are strictly length increasing.  It is a theorem that
a language $L$ is a growing context-sensitive language if and only if
there is a symbol $s$ and a set of length reducing rewriting rules
such that a word $w$ is in $L$ if and only if it can be rewritten to
$s$ by some application of these rules.  While the family of languages
with non-deterministic \dehn algorithms and the family of growing
context-sensitive languages may not be exactly the same, our criterion
for showing that a group does not have a \dehn algorithm also seems
likely to show that its word problem is not growing context-sensitive.
Now all automatic groups (and their finitely generated subgroups) have
context-sensitive word problems \cite{Sh}.  Thus extending this result
to the non-deterministic case would show that the class of groups with
growing context-sensitive word problem is a proper subclass of those
with context-sensitive word problem\begin{footnote}{Examples of groups
with context-sensitive word problem, but not growing context-sensitive
word problem are given in \cite{KO}. In work in progress (joint with
Derek Holt and Sarah Rees) we show that a language is growing
constext-sensitive if and only if it is the language of a
non-deterministic \dehn algorithm.  In addition, we show that the
methods of Sections \ref{splicingEtc} and \ref{noDehn} extend to these
non-deterministic \dehn algorithms. This has additional
language-theoretic consequences.}\end{footnote}.

\subsection{Thanks} We wish to thank Gilbert Baumslag, Jason
Behrstock, Brian Bowditch, Martin Bridson, Bill Floyd, Swarup Gadde,
Bob Gilman, Susan Hermiller, Craig Hodgson, Chuck Miller, Walter
Neumann and Kim Ruane for helpful conversations.  We also wish to give
special thanks to Jim Cannon for suggesting the key idea of this work
to us during a conference at the ANU in 1996, and for working with us
during the evolution of this paper.

\def\cent{\vert\!\! c }
\def\bl{{\rm b}}

\section{Basic Properties} \label{defs}

\noindent
Let us start by justifying the term \ira.

\begin{lemma} \label{incremental}
Let $R:\A^*\rightarrow \A^*$ denote reduction by a fixed \ira. Then
for all $u,v\in\A^*$, $R(uv) = R(R(u)v).$
\end{lemma}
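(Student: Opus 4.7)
The plan is to run the \ira\ on $uv$ and the \ira\ on $u$ in parallel and show that, step for step, they select the same rule occurrences until the \ira\ on $u$ terminates; continuing the \ira\ on $uv$ past that point is then, by determinism, the same as running the \ira\ starting from $R(u)v$. Let $u = u_0, u_1, \dots, u_k = R(u)$ denote the sequence of intermediate words produced by the \ira\ applied to $u$. I will argue by induction on $i$ that after $i$ reduction steps, the \ira\ applied to $uv$ has reached the word $u_iv$.

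The inductive step is the heart of the argument. Suppose the \ira\ applied to $u_i$ selects an occurrence of a \lhs\ $\ell$ ending at position $p$; since $i<k$, this occurrence lies entirely in $u_i$, so $p < |u_i|$, and the same occurrence is present at the same position in $u_iv$. Any occurrence of a \lhs\ in $u_iv$ that uses a letter of the appended suffix $v$ must end at position at least $|u_i| > p$, so it is strictly worse than $\ell$ under the ``ends closest to the start'' criterion. Hence the set of \lhs\ occurrences ending at positions $\le p$ is identical in $u_i$ and $u_iv$, and the tie-breaking rules (longest, preferring anchored) see exactly the same candidates in both words, so the same occurrence is selected. An anchored rule is applicable at position $0$ in $u_iv$ iff it is applicable at position $0$ in $u_i$, since the two words share their initial segment. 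Performing the replacement in $u_iv$ leaves the suffix $v$ untouched and produces $u_{i+1}v$, completing the induction.

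At step $k$ the \ira\ applied to $uv$ has reached $R(u)v$. Since the \ira\ is deterministic and purely a function of its current word, the remainder of the computation on $uv$ coincides with the entire computation of the \ira\ applied to the word $R(u)v$, whose outcome is $R(R(u)v)$ by definition. The main obstacle is being precise about the selection criterion: one must rule out the possibility that appending $v$ introduces a new \lhs\ occurrence competing with or beating $\ell$ under the ``earliest end, then longest, then anchored'' order. This is exactly the position-bound observation above, and the anchored case is handled by invariance of the initial segment.
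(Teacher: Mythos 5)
Your proof is correct and takes essentially the same approach as the paper's; the paper's own proof is a terse version of exactly this argument (substitutions made in $u$ are made identically in $uv$, so $R(u)v$ is an intermediate word in the reduction of $uv$, and determinism then gives $R(uv)=R(R(u)v)$), while you have filled in the details of why appending $v$ cannot change which rule occurrence is selected.
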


\begin{proof}
If a substitution can be made in $u$, the same substitution will be
made in $uv$. Therefore, in exactly the number of steps the algorithm
takes to change $u$ into $R(u)$, it changes $uv$ into $R(u)v$. 
This shows that $R(u)v$ is an intermediate result of running the
algorithm on $uv$. It follows that both must reduce to the same
eventual result i.e., $R(uv)=R(R(u)v)$. 
\end{proof}

\begin{proposition} \label{remembers}
Let $R$ denote reduction with respect to a 
\dehn algorithm $(\calg,\A,S)$
for $G$. Let $x,y$ be words in $\calg^*$ such that $R(x) = R(y)$.  Then $x$
and $y$ represent the same element of $G$.
\end{proposition}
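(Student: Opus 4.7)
The plan is to exploit the incremental rewriting property from Lemma~\ref{incremental} together with the defining property of a \dehn algorithm, namely that $R(w)$ is empty precisely when $w\in\calg^*$ represents the identity. The underlying idea: to compare $x$ and $y$ in $G$, I should reduce a word that represents $xy^{-1}$ and show it reduces to the empty word.

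First I would pick a word $y'\in\calg^*$ representing $y^{-1}$ in $G$. Such a $y'$ exists because $\calg$ is a semigroup generating set for $G$, so every group element, in particular $y^{-1}$, is a product of elements of $\calg$. The concatenation $yy'\in\calg^*$ then represents the identity, so by the definition of a \dehn algorithm, $R(yy')$ is the empty word. Applying Lemma~\ref{incremental} gives $R(yy') = R(R(y)\,y')$, so $R(R(y)\,y')$ is empty as well.

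Now using the hypothesis $R(x)=R(y)$, I can substitute to conclude $R(R(x)\,y')$ is empty, and then a second application of Lemma~\ref{incremental} gives $R(xy') = R(R(x)\,y') = $ the empty word. Since $xy'\in\calg^*$, the \dehn algorithm property (in the other direction) tells us $xy'$ represents the identity in $G$. Because $y'$ represents $y^{-1}$, this means $xy^{-1}=1$ in $G$, i.e., $x=y$ in $G$, which is the desired conclusion.

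There is essentially no obstacle: the proof is a two-line calculation once one has $y'$. The only mildly delicate point is the very first step---noticing that the semigroup-generating hypothesis on $\calg$ is exactly what is needed to produce a word $y'\in\calg^*$ representing $y^{-1}$, so that the inverse element of $y$ lives in the input alphabet's language and Lemma~\ref{incremental} can be applied to $yy'$.
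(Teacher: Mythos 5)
Your proof is correct and follows essentially the same route as the paper's: apply Lemma~\ref{incremental} twice to show $R(xy') = R(R(x)y') = R(R(y)y') = R(yy')$ is empty, then invoke the defining property of the \dehn algorithm. You are somewhat more careful than the paper in explicitly choosing a word $y'\in\calg^*$ representing $y^{-1}$ (justified by the semigroup-generating hypothesis), where the paper tacitly writes $y^{-1}$ for such a word.
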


\begin{proof}
If $R(x)=R(y)$ then $R(x)y^{-1} = R(y)y^{-1}$ from which it follows,
by Lemma~\ref{incremental}, that $R(xy^{-1})$ equals the empty
word. But since $R$ comes from a \dehn algorithm, this implies that $x$
and $y$ represent the same group element.
\end{proof}

This means that a \dehn algorithm always remembers what element of
the group it was fed.  In a sense this tells us that $R(x)$ is a kind
of ``canonical form'' for $x\in \calg^*$. 

As we shall see, Proposition~\ref{remembers} does not hold for
non-incremental \dehn algorithms.  The following proposition shows
that the incremental rewriting algorithms form a subclass of the
non-incremental ones.

\begin{proposition} \label{generalizes}
Given rewriting rules $(\A, S)$ there is a set of rewriting rules
$(\A,S')$ such that the non-incremental rewriting algorithm of
$(\A,S')$ carries out exactly the same substitutions as the incremental
rewriting algorithm of $(\A, S)$. 
\end{proposition}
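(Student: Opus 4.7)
The plan is to encode IRA's ``leftmost-ending'' rule selection into NIRA's ``leftmost-starting'' selection by baking a fixed-length left context into every left-hand side of $S'$. Let $L$ be the maximum length of any LHS in $S$. The intuition is that when IRA applies a rule $u\to v$ at a position in a word, the (at most) $L-1$ characters immediately preceding $u$ record the local ``state'' of IRA's forward scan; attaching this context as a prefix of the $S'$-LHS forces NIRA to match it starting just before $u$ (or earlier), reproducing the same substitution.

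For each rule $u\to v$ of $S$ and each $p\in\A^*$ with $|p|\le L-1$, I check (a) that $p$ contains no LHS of $S$, (b) that $pu$ contains no LHS of $S$ ending at any position strictly less than $|pu|$, and (c) that $u$ is the LHS IRA would select (longest, anchored preferred) among those ending at position $|pu|$ in $pu$. When (a)--(c) hold I add $pu\to pv$ to $S'$, making it unanchored when $|p|=L-1$ and left-anchored otherwise so that short-context rules fire only at the very start of a word. Rules of $S$ that are themselves anchored are treated analogously with $p$ empty.

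Verification is by induction on the number of substitutions: whenever IRA with $S$ and NIRA with $S'$ have produced the same current word $w$, I show the next substitution is identical. If IRA applies $u\to v$ at positions $[j,k]$ in $w$, then setting $m=\min(j-1,L-1)$ and $p=w[j-m:j-1]$, conditions (a)--(c) are verified directly from IRA's choice, so $pu\to pv$ lies in $S'$ and matches $w$ at position $j-m$.

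The principal obstacle is showing that NIRA actually picks this rule: no $S'$-rule matches $w$ at any earlier starting position, and no longer $S'$-match exists at position $j-m$. The argument combines two inputs: IRA's step at $[j,k]$ forces $w[1:k-1]$ to contain no LHS of $S$, so every LHS of $S$ occurring in $w$ must end at position $\ge k$; and condition (b) applied to any competing rule $p'u'\to p'v'$ of $S'$ prohibits $u$ from occurring as an internal substring of $p'u'$. Combined with the length bounds $|p'|\le L-1$ and $|u'|\le L$, a short case analysis (splitting on whether $u$ is contained in the matched block $p'u'$ or extends past it) yields a contradiction in every case. Once uniqueness of NIRA's match is established, applying $pu\to pv$ rewrites $w$ in exactly the same way as applying $u\to v$, closing the induction.
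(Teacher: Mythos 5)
Your argument is (as far as I can trace the sketched case analysis) correct, but it takes a considerably heavier route than the paper and ends up proving a slightly different statement.

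The paper's proof is a one-paragraph observation going in the opposite direction. The NIRA and the IRA can only disagree on a word $w$ when one LHS of $S$ occurs nested strictly inside another (starting and ending strictly within it, or as a proper prefix): then the NIRA prefers the outer, longer match (earliest start) and the IRA prefers the inner, shorter one (earliest end). But the IRA, by its earliest-ending preference, can \emph{never} actually invoke a rule whose LHS contains another LHS of $S$ ending strictly before its last letter, since wherever the former matches the latter matches too and ends earlier. So one simply lets $S'\subseteq S$ be $S$ with all such rules discarded. The IRA's behaviour on $S'$ is identical to its behaviour on $S$, and with the nesting gone the NIRA and the IRA make the same choice of \emph{the same rule at the same position} at every step, which is exactly the claim.

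Your construction instead enlarges $S$: each rule $u\to v$ spawns a family $pu\to pv$ over admissible left contexts $p$ of length up to $L-1$, with anchoring for short contexts. This does force NIRA's earliest-start preference to mirror IRA's earliest-end preference, and the induction closes: condition (b) together with the fact that no $S$-LHS ends before $k=\text{end}(u)$ in $w$ rules out any competitor starting earlier, and condition (c) pins down the unique decomposition at position $j-m$. Two remarks, though. First, this $S'$ is typically far larger than $S$ (on the order of $|S|\cdot|\A|^{L-1}$ rules) whereas the paper's $S'$ is a subset of $S$. Second, and more to the point, the rule the NIRA applies ($pu\to pv$ at position $j-m$) is not the rule the IRA applies ($u\to v$ at position $j$); you get the same \emph{sequence of intermediate words}, which suffices for every downstream use, but not literally ``the same substitutions'' as the proposition states. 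The paper's insight---that the rules the IRA can never fire are exactly the ones to delete---is the cleaner mechanism and worth internalizing. (Both proofs are informal about anchored rules in $S$; that is not a defect specific to yours.)
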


\begin{proof}
Suppose we carry out the non-incremental rewriting algorithm given by
$(\A, S)$. In what situation would it make a different substitution to
that chosen by the \ira? Clearly only when we encounter nested
left-hand sides in our word. In that case the non-incremental
algorithm chooses the longer word because it starts first, whereas the
incremental algorithm chooses the shorter because it ends first. But
this means that the \ira\ will never actually invoke the rule with the
longer \lh\ side. Therefore we can discard from $S$ any rules whose
\lh\ sides contain another \lh\ side ending before the last letter. 
Call the set of rules we obtain $S'$. Using these rules both
algorithms make exactly the same substitutions.
\end{proof}


\subsection{Rewriting algorithms and compression}
The key result underlying the group theoretic properties of \dehn
algorithms is that if a group has a \dehn algorithm with respect to
one (finite) set of generators, it has one with respect to any other.

\def\astarn{{\A^{*n}}}
\def\gas{(\calg,\A,S)}

Let $\calg$ and $\calg'$ be sets of semi-group generators for $G$, such that
$(\calg,\A,S)$ is a \dehn algorithm for $G$. Each element of $\calg'$ can
be expressed as a word in $\calg^*$. Let $n$ be the length of the longest
such word. Let $\A^{*n}$ be the set of non-empty words of length at
most $n$ in $\A^*$. We can use it as an alphabet, each of whose
letters encodes up to $n$ letters of $\A$. Since $\calg\subseteq\A$ we
can regard $\calg'$ as a subset of $\astarn$.

The {\em writing out} map from $(\astarn)^*$ to $\A^*$ maps a word to
the concatenation of its letters.  Lemma~\ref{compression} shows that
given $\gas$ we can construct an algorithm $(\calg', \astarn, S')$ which,
by ``mimicking'' $\gas$, deletes its input precisely when $\gas$
deletes the written out version of the same input.  Unfortunately the
algorithm we give is not quite an \ira: its rules are not strictly
length decreasing. The main point of this section is to explain how we
can overcome this problem and give an \ira\ which does what we
want. 

\begin{lemma} \label{compression}
Let $(\A_0,\A,S)$ be an incremental (or non-incremental) rewriting
algorithm. Then for any integer $n>0$ there exists a non-strictly
length decreasing incremental (resp.\ non-incremental) rewriting
algorithm $(\A_0^{*n},\astarn,S')$ with the following property. For
each word $w\in(\A_0^{*n})^*$, the reduction of $w$ with respect to
$(\A_0^{*n},\astarn,S')$ written out, equals the reduction with
respect to $(\A_0,\A,S)$ of $w$ written out.
\end{lemma}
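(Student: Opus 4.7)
My construction of $S'$ includes, for every rule $u\to v$ in $S$ and every way of realising $u$ as a substring of a concatenation of blocks, a chunk-level rule that carries out the same substitution. Precisely, for each $k\ge 1$, each tuple $(B_1,\ldots,B_k)$ of letters of $\A^{*n}$, and each pair of positions $\alpha$ in $B_1$ and $\beta$ in $B_k$ such that $u$ appears in the concatenation $B_1\cdots B_k$ starting at position $\alpha$ and ending at position $\beta$, I put into $S'$ a rule whose \lh\ side is the word $B_1B_2\cdots B_k$ over $\A^{*n}$ and whose \rh\ side is a canonical chunking of the $\A$-word $B_1'\,v\,B_k'$, where $B_1'$ is the prefix of $B_1$ before position $\alpha$ and $B_k'$ is the suffix of $B_k$ after position $\beta$. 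Anchored rules of $S$ give rise to anchored rules of $S'$ (with $B_1$ required to be the initial letter and $\alpha=0$), and end-anchored rules are handled symmetrically in the non-incremental case. When several distinct splittings yield the same chunk-sequence \lh\ side, I retain only the splitting coming from the $S$ rule that $S$ itself would prefer on these blocks (smallest $\beta$, then longest $u$, then anchored), so that each \lh\ side still appears in $S'$ at most once.

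The non-strict length bound is immediate: because $|v|<|u|$, the $\A$-word $B_1'\,v\,B_k'$ has length strictly less than $|B_1|+\cdots+|B_k|\le kn$ and hence chunks into at most $k$ letters of $\A^{*n}$. Termination follows because each application of an $S'$ rule strictly decreases the total $\A$-length of the writing-out of the current chunked word, even though the number of chunks might be preserved.

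I plan to establish the stated equality of reductions by induction on the number of steps taken by the $S$ algorithm on the writing-out. If the $S$ algorithm's first step applies a rule $u\to v$ at a position falling inside blocks $B_i,\ldots,B_j$ of the current chunked word, then by construction $S'$ contains a rule with \lh\ side $B_i\cdots B_j$ whose application has exactly the same effect on the writing-out; I need (i) that no $S'$ rule has \lh\ side ending at a block index $j'<j$, and (ii) that among $S'$ rules ending at $B_j$ the chunk-level preference selects the one coming from $u$. Clause (i) is clean, since any such rule would correspond to some $S$-match whose writing-out occurrence ends in $B_{j'}$, hence before $u$, contradicting the $S$ algorithm's choice. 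Given (i) and (ii), the two sides perform steps with identical writings-out, and induction gives the conclusion.

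I expect the main obstacle to be clause (ii), because the chunk-level priority ``longest-then-anchored'' is strictly coarser than the letter-level priority ``earliest-ending, then longest, then anchored''. In particular, a candidate $S'$ rule with \lh\ side $(B_{i'},\ldots,B_j)$ for some $i'<i$ could defeat the desired rule under the chunk-level comparison while its underlying $u'$-match ends later within $B_j$ and therefore loses under the letter-level comparison. The tie-breaking convention in the construction is designed precisely to suppress such candidates, and (ii) reduces to a short case analysis: any $u'$ giving rise to a longer block-LHS ending at $B_j$ must end in $B_j$ at some position $\beta'$, and the three cases $\beta'<\beta$, $\beta'=\beta$, $\beta'>\beta$ each either contradict the $S$ algorithm's choice of $u$ or are ruled out by the retention convention. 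The strict-length-decreasing refinement of $S'$ into a bona fide \ira\ is deferred to the rest of the section, as announced in the discussion preceding the lemma.
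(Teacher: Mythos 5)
Your construction has a genuine gap in the way it determines the right\mbox{-}hand side of a chunk-level rule, and the retention convention does not suppress the problematic candidates as you claim.

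The issue is in clause (ii). Your rule for a block tuple $B_1\cdots B_k$ always carries out the $S$-substitution for an $S$-\lhs\ $u$ that \emph{spans from $B_1$ to $B_k$} (starting inside $B_1$ and ending inside $B_k$), and your retention convention only compares among such spanning occurrences. But the $S$-preferred substitution inside the written-out window $B_1\cdots B_k$ need not span the whole window. Concretely, take $n=3$, $W=6$, and let $S$ contain rules $efg\to p$ and $cdefgh\to q$. On the chunked word $(abc)(def)(ghi)$, the $S$-algorithm on $abcdefghi$ applies $efg\to p$ (it ends at position $7$, before $cdefgh$'s end at $8$), yielding $abcdphi$. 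In your $S'$, the rules touching $(abc)(def)(ghi)$ are: the two-block rule $(def)(ghi)$ coming from $efg$, and the three-block rule $(abc)(def)(ghi)$ whose \emph{only} spanning candidate is $cdefgh$ (since $efg$ starts in the second block, not the first), so its right-hand side is a chunking of $abqi$. Both chunk-level left-hand sides end at block $3$, and the \ira\ prefers the longer one, so your algorithm rewrites to a chunking of $abqi$. This differs from $abcdphi$ already at the first step, so the writings out disagree. The cases $\beta'<\beta$ and $\beta'=\beta$ in your analysis are indeed impossible, but the case $\beta'>\beta$ (here $\beta'=2>\beta=1$) is \emph{not} ruled out by retention, because $u=efg$ and $u'=cdefgh$ produce distinct chunk-sequence left-hand sides, so the convention never compares them.

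The paper avoids this by decoupling the left-hand side from the spanning occurrence. Its $S'$-left-hand sides are \emph{all} block-words of chunk-length at most $W$ whose writing out contains some $S$-\lhs, and the right-hand side is obtained by running \emph{one step of the $S$-algorithm} on the written-out window (with anchoring respected). This guarantees that whichever chunk window the \ira\ selects, the rule it applies performs the substitution $S$ itself would perform there --- which, since $u$ is $S$-preferred in the whole word and hence in any subword containing it, is exactly $u\to v$. If you change your right-hand side definition to ``apply the $S$-preferred step to the written-out window'' rather than ``replace the spanning occurrence,'' and drop the requirement that $u$ start in $B_1$ and end in $B_k$ when forming left-hand sides, your inductive argument goes through. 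You should also note that in the non-incremental case the paper adds an extra end-anchoring constraint (condition (2) in its proof) to prevent the first block-level match from missing a nested earlier-starting $S$-\lhs; a brief ``handled symmetrically'' does not cover that subtlety.
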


\begin{proof}
Let $W$ be the length of the longest \lh\ side in $S$. 

For an incremental algorithm, 
the set of \lh\ sides in $S'$ 
is the set of all words of length less than
or equal to $W$ in $(\astarn)^*$, with and without leading \verb'^' 's,
which, when written out, contain a \lh\
side of $S$. For each such word, we write it out, apply one
substitution from $(\A_0,\A,S)$, and write it back into $(\astarn)^*$
to obtain the corresponding \rh\ side; an anchored rule can only be applied
if the \lh\ side starts with a \verb'^'.

That this can be done without making the \rh\ side any longer in
$(\astarn)^*$ than the \lh\ side should be clear: one case
when the \rh\ side cannot be any shorter is when the \lh\ side is one
letter long, and the substitution we make on the written out word does
not entirely delete it. 

We have to check that, modulo writing out, the two algorithms carry
out the same substitutions. Let $w$, written out, contain a \lh\ side
$u$ of $S$. 
Some subword of $w$, adorned with a \verb'^' if it is an
initial segment, contains $u$, and is a \lh\
side in $S'$. 
The first $S'$-\lh\ side can't end to the left of the
end of $u$, since it would then contain no $S$-\lhs\
at all. Therefore the first $S'$-\lh\ side contains
$u$, and is anchored if $u$ is an initial segment. The rule in $S'$
for this \lh\ side carries out the substitution in $S$ for $u$. 

For a non-incremental algorithm, the set of \lh\ sides in $S'$ 
is the set of words $U\in (\astarn)^*$ of length less than
or equal to $W$, with optional leading and trailing
\verb'^' 's, such that 
\begin{enumerate}
\item $U$ written out contains a \lh\ side of $S$, and
\item if the first $S$-\lh\ side in $U$
starts fewer than $W$ $\A$-letters from the end of $U$, then 
$U$ ends with a \verb'^'. 
\end{enumerate}

Let $w$ and $u$ be as above. 
We can find an $S'$-\lh\ side $U$ in $w$ which contains $u$.
Now $u$ could have a subword $u_0$ which is also a
\lh\ side in $S$. In principle, the first $S'$-\lh\ side in $w$ might
contain $u_0$ but not $u$, but this is ruled out by (2). 
Therefore the first $S'$-\lh\ side in $w$ contains $u$,
and the corresponding rule does the right substitution. 
\end{proof}

We want to adjust this basic construction so as to obtain rules which
are strictly length decreasing.  If each rule in $S$ were to delete at
least $n$ letters, there would be no problem, but this will not
generally be the case.  When the input word has two or more letters we
might write the result of a single substitution as a shorter word in
$(\A^{*(2n-1)})^*$. This doesn't really solve the problem since we end
up working in larger and larger alphabets. And what about a word of
length $1$ in $(\astarn)^*$ which when written out and reduced, is
non-empty? The algorithm we construct will not touch such a word
unless it can delete it entirely. In fact, unless it can delete its
input completely, it may stop short with some intermediate result of
the original algorithm. This is fine since we only really care
whether or not an input word is deleted completely. 

We return first to the original algorithm $(\A_0,\A,S)$, and try to see
to what extent it can be made to remove several letters at a time when it
substitutes. 

It is helpful to think of the algorithm as being
carried out by a machine which views the word it is processing through
a window of size $W$, where $W$ is the length of the longest \lhs\ in
$S$. 
Since the incremental algorithm works by observing the
earliest ending left hand side, one might imagine that the machine acts
when a left hand side ends at the end of the window. 
Similarly, a \nira\ acts when a \lh\ side starts at the start of the
window. 
If there are no \lh\ sides visible,
the machine steps one letter to the right,
or stops if it has reached the end of the word. If there {\em is} a \lh\
side, it substitutes and steps $W-1$ letters to the left.

Let $w$ be a word containing a \lhs\ and let $u$ be the first
such in $w$.  Let us look at a subword $U$ of $w$ extending $A\geq W$
letters to the left of $u$, and $B\geq W$ letters to the right, and
see what the machine does. The machine's actions are entirely
determined by the contents of this {\em $A,B$-neighborhood of $u$}
until such time as it needs to examine letters either to the left or
to the right of it.  We say that the machine goes to the left
or to the right accordingly.  In the first case the machine must first
make at least $\lfloor \frac{A}{W-1} \rfloor + 1$ substitutions.  We
call each substitution made in this way a {\em subword reduction.}

If there are fewer than $A$ letters to the left of $u$, $U$ is an
initial segment of $w$; then the machine's actions are
determined by the contents of $U$ until it (inevitably) goes to the right. If
there are fewer than $B$ letters to the right, the machine can either
go to the left or terminate.

We make rules which carry out several substitutions at a time.
The new \lh\ sides are the reducible words with no more than $A$
letters before the first \lh\ side, and no more than $B$ letters after
it.  The new \rh\ sides are the result of running the machine on the
\lh\ sides until it goes to the left or the right. If the new \lh\
side has fewer than $A$ letters before its first $S$-\lh\ side, we allow
the machine to run until it goes to the right and make the resulting
rule be anchored at the start. If there are fewer than $B$ letters after the
$S$-\lh\ side, we allow the machine to run until it goes to the
left or terminates; for a \nira\ we make the resulting rule be
end-anchored. We call the rules we obtain {\em left-going} if
the machine went to the left and {\em right-going} if it went to the
right or terminated. 


Finally, let us discard all right-going rules which have a \lh\ side
with fewer than $B$ letters after the first $S$-\lh\ side, and
non-empty \rh\ side. Let $S'$ contain all the remaining rules.  We
claim that as long as $A \geq B + W$ and $B > W-1$ a machine using the
rules $S'$ still carries out the same substitutions but may stop short
of fully reducing the input word (with respect to $S$).

Let $w$ be a word containing an $S$-\lhs\ and let $u$ be the first
such in $w$. We have to show that if $w$ contains an $S'$-\lhs\ then
the first such contains $u$. An $S'$-\lhs\ can't end to the left of the
end of $u$ since in the incremental case it would contain no $S$-\lhs,
while in the non-incremental case it would have to be a
non-end-anchored rule with fewer than $B$ letters to the right of its
first $S$-\lhs. Therefore {\em if} any $S'$-\lhs\ contains $u$, the first
one in $w$ does. 

If we can find no $S'$-\lhs\ containing $u$ then the
$A,B$-neighborhood of $u$ must be one of the deleted \lh\ sides. In
that case $u$ ends within $B$ letters of the end of $w$. Since $A \geq
B+W$, any other rule which might apply, containing some other
$S$-\lhs\ $u_1$ to the right of $u$, would also see $u$, which is a
contradiction. The fact that the rule for the
$A,B$-neighborhood of $u$ has been deleted means that in this case the
original algorithm would have terminated with a non-empty result.

With $w$ and $u$ as above, we define the {\em reduction point} of an \ira\
to be the \rh\ edge of $u$, while for a \nira\ it is the \lh\ edge of
$u$. 
Each rule is either, 
\begin{enumerate}
\item left-going, deleting at least $\lfloor \frac{A}{W-1} \rfloor + 1$
letters, 
\item right-going, deleting the whole \lh\ side, or
\item right-going, shifting the reduction point 
at least $B-(W-1)$ letters to the right, or out of the word entirely. 
\end{enumerate}

\begin{lemma} \label{compression2}
Let $(\A_0,\A,S)$ be an incremental (or non-incremental) rewriting
algorithm. Then for any integer $n>0$ there exists an incremental
(resp.\ non-incremental) rewriting algorithm
$(\A_0^{*n},\A^{*(2n-1)},S')$ with the following property. For each
word $w\in(\A_0^{*n})^*$, the reduction of $w$, with respect to
$(\A_0^{*n},\A^{*(2n-1)},S')$, written out is an intermediate result
of the reduction of $w$ written out with respect to $(\A_0,\A,S)$.  It
is empty if and only if the latter is also.
\end{lemma}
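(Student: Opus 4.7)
The plan is to apply the machine-with-window construction developed in the paragraphs preceding the lemma, choosing the window parameters $A, B$ with $B > W-1$, $A \geq B + W$, and additionally $A \geq (2n-2)(W-1)$; this last inequality forces every left-going rule produced by the machine to delete at least $2n - 1$ letters of $\A$. Let $S''$ denote the resulting ruleset over $\A$. Its three rule types (left-going, right-going deleting the whole left-hand side, right-going shifting the reduction point) inherit, from the original $(\A_0, \A, S)$ algorithm, the property that running $(\A_0, \A, S'')$ on $w$ yields an intermediate stage of the reduction of $w$ under $(\A_0, \A, S)$, and produces the empty word iff $(\A_0, \A, S)$ does.

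I next compress this into the desired $(\A_0^{*n}, \A^{*(2n-1)}, S')$. For each rule $u \to v$ in $S''$, and each way of writing $u$ as a word $U$ of length $L$ in $(\A^{*(2n-1)})^*$, I add a rule $U \to V$ where $V$ is a chunking of $v$ into at most $L - 1$ letters of $\A^{*(2n-1)}$. For left-going rules this is immediate from $|v| \leq |u| - (2n-1) \leq (L-1)(2n-1)$, and for right-going rules with empty $v$ it is trivial. For right-going shift rules, where $|u| - |v|$ can be as small as $1$, I enlarge the rule's left-hand side by adjoining an adjacent compressed letter of context; the freedom to choose chunks of size up to $2n-1$ (as opposed to $n$) creates just enough slack in the rechunking of $V$ to guarantee strict compressed length decrease. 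Anchored variants are produced for rules meeting the start or end of the input, following the scheme used in the proof of Lemma~\ref{compression}.

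Finally, I verify the two claimed properties. Simulation: each $S'$-substitution, written out, realizes one or more $S''$-substitutions, so the argument of Lemma~\ref{compression} shows that the $S'$-reduct of $w$ writes out to an intermediate stage of the $S''$-reduction, and hence of the $S$-reduction, of the writeout of $w$. Empty-iff-empty: when $S'$ terminates at a compressed word $x$, the absence of any applicable $S'$-left-hand side forces the writeout of $x$ to contain no $S''$-left-hand side (hence no $S$-left-hand side), because the enumeration of chunkings together with the context-extension supplies an $S'$-rule for every $S''$-reducible pattern; hence $S''$ also terminates at that writeout, and the empty-iff-empty property of $S''$ relative to $S$ finishes the argument. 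The main obstacle is arranging strict compressed length decrease for right-going shift rules, since their original-alphabet deletion may be as small as one letter; the remedy of enlarging the left-hand side by an adjacent compressed letter is precisely what dictates the choice of working alphabet $\A^{*(2n-1)}$ rather than $\A^{*n}$.
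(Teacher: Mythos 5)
Your two-stage plan---first windowing over $\A$ with $A\ge(2n-2)(W-1)$ to force left-going rules to delete at least $2n-1$ letters, then rechunking into $\A^{*(2n-1)}$---captures the right spirit, and your observation that right-going shift rules are the obstacle is correct. (The paper sidesteps left-going rules instead: it runs subword reduction for exactly $2n-1$ steps inside a window wide enough that the reduction point cannot escape to the left, so the issue never arises; your alternative treatment is also fine.) But your remedy for the shift rules has a genuine gap.

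The rechunking argument for a shift rule $u\to v$ with $\ell(u)-\ell(v)=1$ only produces strict compressed-length decrease if the \lh\ side $U$ is built mostly from chunks of size at most $n$. Concretely, if $U$ has $L$ letters all of size $2n-1$, then $\ell(u)\le(2n-1)L$, $\ell(v)\ge(2n-1)L-1$, and any rechunking of $v$ into $\A^{*(2n-1)}$ needs at least $\lceil((2n-1)L-1)/(2n-1)\rceil=L$ compressed letters, so no length-reducing rule exists at all. Adjoining one more compressed letter of context only makes matters worse for the same reason. Now your shift rules \emph{produce} chunks of size up to $2n-1$ as output, so after a few applications the current word can accumulate such chunks, and nothing in your construction prevents the window from landing on a cluster of them. ``Just enough slack'' therefore fails precisely when it is most needed.

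The paper's proof hinges on an explicit invariant that your proposal lacks: it splits $\A^{*(2n-1)}$ into $\cB$ (chunks of length $\le n$) and $\cC$ (longer chunks), requires all $\cC$-letters to lie to the left of the reduction point, and requires consecutive $\cC$-letters to be at least $(2n-1)(W-1)$ original letters apart. This sparsity guarantees the window never sees more than one $\cC$-letter at a time, which is exactly the bound under which your rechunking inequality goes through. The paper then verifies, case by case, that the rules preserve the invariant---in particular that when a shift rule creates a new $\cC$-letter at the end of its \rhs, it lands at least $(2n-1)(W-1)$ letters past the previous one (since $B=2nW$ is chosen large enough). Without stating and maintaining this invariant, the proposed rules are not actually strictly length decreasing, so the construction does not yield an \ira\ (or \nira).

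A secondary inaccuracy: you argue the empty-iff-empty property by claiming that when $S'$ terminates, the writeout contains no $S''$-\lhs\ and hence no $S$-\lhs. That's too strong. The windowed machine $S''$ only ``stops short'' of full $S$-reduction, and $S'$ in turn may stop short of $S''$, because rules with too few letters to the right are deliberately discarded. The correct argument is that those discards only occur in situations where the original algorithm would terminate with a non-empty word, so completeness of deletion is still faithfully decided.
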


\def\cB{{\cal B}}
\def\cC{{\cal C}}

\begin{proof}
Let us first give names to parts of our new working alphabet. Let
$\cB$ be all words in $\A^{*(2n-1)}$ of length at most $n$, and let
$\cC$ be all longer words. Our new input alphabet is a subset of
$\cB$, and an input word is a word in $\cB^*$. 

At any given time during the running of our new algorithm the current
word will satisfy the following conditions. No $\cC$-letters end (in
the written out word) to the right of the reduction point. Any
$\cC$-letters present will end at least $(2n-1)(W-1)$ original letters
apart, i.e.\ they will be relatively sparse.

We shall give rules that, modulo writing out, carry out subword
reduction looking at least $A = 2nW + W$ original letters to the left of
the first \lh\ side and $B = 2nW$ letters to the right. 
The left-hand sides are words in
$(\cB\cup\cC)^*$ such that 
\begin{enumerate}
\item each is $S$-reducible when written out, 
\item each has up to $A + (2n-2)$ original letters before the
first original \lh\ side and up to $B + (n-1)$ following it,
\item any $\cC$-letters present come before the reduction point and are
sparse, as noted above,
\item if there are fewer than $A$ original letters before the
first original \lh\ side, it starts with a \verb'^', and
\item (non-incremental case only) if there are fewer than $B$ original
letters following the first original \lh\ side, it ends with a \verb'^'. 
\end{enumerate}
Modulo writing out, these are the same \lh\ sides as before except that we
have to allow for the granularity of the $\cB$ and $\cC$ letters. 


To obtain each corresponding \rh\ side we apply subword reduction to
the written out word for $2n-1$ steps or until subword reduction is
complete if this happens first: it follows that there will be no
left-going rules.
If $2n-1$ substitutions were made (or the \lh\ side was deleted
entirely) we can write the result using at least one fewer
$\cC$-letters, or fewer $\cB$-letters if no $\cC$-letters were
present. Since the reduction point moves at most $(2n-1)(W-1)$
original letters to the left, it moves past at most one
$\cC$-letter. Therefore we can write our \rh\ side so as to preserve
the above conditions on the placement and sparsity of
$\cC$-letters. 

If subword reduction is complete before $2n-1$ substitutions have been made,
and the result is non-empty, it may be impossible to keep the number of $\cC$
letters fixed and still write a length reducing rule. If this is the
case, and there were fewer than $B$ original letters
after the original \lh\ side, we discard the rule entirely. With
$B > 2n-1$ or more original letters after the \lh\ side, only
reductions which remove fewer than $n$ letters can force us
to introduce a new $\cC$-letter. For an \ira\ the new reduction point 
will be to the right of our subword. 
By writing the new $\cC$-letter at
the end of the \rh\ side we ensure that it ends at least $(2nW -
(n-1))$ letters to the right of the previous reduction point. Since 
$(2nW - (n-1)) > (2n-1)(W-1)$ the sparsity of $\cC$-letters is preserved.
For a \nira\ the new reduction point could be up to $W-1$ letters
in from the end of our \rh\ side. Thus our new $\cC$-letter might have
to end up to $W-1+n-1$ original letters from the end of the \rhs. 
This still puts it at least $(2nW - 2(n-1) - (W-1)) > (2n-1)(W-1)$
letters to the right of the previous reduction point.

The rules we have given are strictly length decreasing. They preserve
the conditions given on the placement of $\cC$-letters.  Modulo
writing out and working several steps at a time, the rules apply the
same substitutions as the original algorithm. If a word is reducible
when written out, either a rule will apply, or the word will be a few steps
away from being reduced with a non-empty result. It follows that the
new rules delete a word in $(\A_0^{*n})^*$ if and only if the original
rules deleted the same word written out. 
\end{proof}


\subsection{Composition of non-incremental rewriting algorithms}

Let us introduce the notion of a finite state \dehnMachine. As with
rewriting algorithms these can be either incremental or
non-incremental. (We describe the non-incremental version: to obtain
the incremental version, read ``ending at the current position''
wherever the definition says ``starting at the current position.'')
Such a machine comes with a finite collection of
states, $Q=\{q_i\}$.  One of these, $q_0$ is the start state. For each
state $q\in Q$ there is a collection of length reducing replacement
rules $S_q = \{u_i \to v_i\}$.  There is also a transition function
which chooses a new state depending on the current state and the
contents of the subword of length $W$ starting at the current
position, where $W$ is an upper bound for the lengths of all the \lh\
sides.

Such a machine starts in the start state at the beginning of the input
word. In state $q$, it looks at the next $W$ letters for the longest
\lhs\ in $S_q$ starting at the current position, and to determine its
new state. It then either substitutes and steps $W$ letters to the
left, or steps one letter to the right. In either case it switches to
the new state.  It terminates when it reaches the end of the word with
no further replacements possible.

Observe that when a \dehnMachine\  with state terminates it does not
necessarily leave behind a word which is free of \lh\ sides. While
\dehnMachine s with state are ostensibly more powerful than rewriting
algorithms, we show that, by storing the state information in the
current word, we can get a rewriting algorithm to ``mimic'' a
\dehnMachine. We then use \dehnMachine s with state to show that 
\nira s have a nice composition property. 

We can extend the concept of {\em writing out} to include any map
$\A^{\prime *}\rightarrow \A^*$ induced by a map $\A'\rightarrow \A^*$. 
A machine {\em stops short} if it terminates at a point when all
remaining substitutions would have applied to a final segment of
bounded length. One machine {\em mimics} another if the result of the
mimic written out is always a result of the original stopping short. 

\begin{proposition}\label{statemachine}
Given a non-incremental (or incremental) finite state \dehnMachine,
there is a \nira\ (resp.\ \ira) which mimics it. The mimic terminates 
with an empty word if and only if the finite state \dehnMachine\ 
terminates with an empty word {\em in its start state.}
\end{proposition}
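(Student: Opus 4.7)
The plan is to encode the \dehnMachine's configuration---its state, head position, and a bounded memory of recently scanned letters---as a single marker letter $[q,\sigma]$ inside the word, and then design length-reducing rewriting rules that simulate the \dehnMachine's transitions.

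First, I would enlarge the working alphabet by adjoining a marker letter $[q,\sigma]$ for each state $q\in Q$ and each word $\sigma\in\A^{\leq W}$, where $W$ bounds the length of all left-hand sides in $\bigcup_{q}S_q$. The writing-out map sends $[q,\sigma]\mapsto\sigma$ and each letter of $\A$ to itself. The invariant during the simulation is that the word contains exactly one marker, the writing out of the current word equals the \dehnMachine's current word, the \dehnMachine's head sits just past $\sigma$, and $\sigma$ records the last $|\sigma|$ letters before the head.

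Next, I would construct four rule families. A step-right rule $[q,\sigma]\,a_1\cdots a_W\to [q',\sigma']\,a_2\cdots a_W$, with $q'=\delta(q,a_1\cdots a_W)$ and $\sigma'$ obtained by appending $a_1$ to $\sigma$ (dropping its first letter if $|\sigma|=W$), absorbs one letter into the marker and is length-reducing by one. A substitution rule, one for each $q$ and each $u\to v$ in $S_q$, has left-hand side $c_1\cdots c_{2W}\,[q,\sigma]\,u$; its right-hand side performs the substitution $u\to v$, shifts the marker $W$ letters to the left, loads its new memory with the appropriate window of $W$ consecutive letters from $c_1\cdots c_{2W}$, and emits the remaining $c_i$'s and the old memory $\sigma$ as plain letters. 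A letter count shows that this rule is length-reducing by $W+|u|-|\sigma|-|v|\geq 1$. Start-anchored initialization rules install the marker (combined with one or two initial transitions so as to remain length-reducing), and a both-ends-anchored termination rule $[q_0,\varepsilon]\to\varepsilon$ removes the marker only when the entire word equals $[q_0,\varepsilon]$. Standard anchored variants handle the boundary cases where fewer than the required number of letters are available on either side of the marker.

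The main obstacle is arranging the \dehnMachine's step-right action---which is naturally length-preserving---to be realised as a strictly length-reducing rule. The key trick is the bounded memory $\sigma$: absorbing the stepped-past letter into the marker gives the required length reduction, and the memory width $W$ is exactly enough to support the \dehnMachine's re-scan after a substitution (since the \dehnMachine moves at most $W$ to the left). Letters that are dropped from $\sigma$ during overflow steps are lost forever, but this is safe, because they lie beyond the $W$-letter re-scan horizon and can never be revisited. Correctness then follows by an induction on rewriting steps that preserves the invariant above, and the if-and-only-if on termination is immediate: the only rule that can delete a marker is the both-ends-anchored one for $[q_0,\varepsilon]$, and the mimic reaches this configuration exactly when the simulated \dehnMachine ends on the empty word in state $q_0$.
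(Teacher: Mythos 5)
Your high-level plan---encoding the \dehnMachine's state and head position in a single marker letter $[q,\sigma]$, and realising each \dehnMachine\ step as a length-reducing rewrite---is in the same spirit as the paper's, which uses a ``first coloured letter'' to mark state and position. But the step-right rule has a critical flaw that breaks the simulation.

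When $|\sigma|=W$, your rule $[q,\sigma]a_1\cdots a_W\to[q',\sigma']a_2\cdots a_W$ with $\sigma'=\sigma_2\cdots\sigma_W a_1$ drops $\sigma_1$ from the written-out word: the left-hand side writes out to $\sigma_1\cdots\sigma_W a_1\cdots a_W$ but the right-hand side writes out to $\sigma_2\cdots\sigma_W a_1\cdots a_W$. Your stated invariant, ``the writing out of the current word equals the \dehnMachine's current word,'' is therefore violated the first time the buffer overflows. The only way to preserve the write-out is to emit $\sigma_1$ as a plain letter, but then the rule has length $W+1$ on both sides and is not length-reducing. The claim that dropped letters ``lie beyond the $W$-letter re-scan horizon and can never be revisited'' is simply false: a single step-right followed by two consecutive substitutions moves the head $2W$ letters to the left, well past the dropped letter, and the \dehnMachine\ may read it there. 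Concretely, the window positions after the second substitution include the position of $\sigma_1$, but the mimic's word no longer contains it, so the mimic's $c_i$'s are shifted by one and the reconstructed buffer is wrong. The paper avoids this entirely by never deleting information: passed-over letters stay in the word (recoloured white) and length reduction on step-right is obtained by \emph{compressing} two white letters into a single encoded letter rather than deleting anything, then decompressing on the way back. Your buffer trick handles one free absorption at the start, but once the buffer is full it needs a compression mechanism like the paper's to avoid destroying data.

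A secondary issue: your termination rule $[q_0,\varepsilon]\to\varepsilon$ anchored at both ends is not available in the incremental setting, since \ira s are only permitted start-anchored rules. A start-anchored rule $\hat{}[q_0,\varepsilon]$ ends one letter from the start of the word, so the \ira\ preference for the earliest-ending left-hand side would make it fire whenever $[q_0,\varepsilon]$ appears as the first letter even with more letters following, which is wrong. This is fixable with some care but is not addressed.
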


\begin{proof}
We give first a non-strictly length decreasing rewriting algorithm.
At the end we sketch how the trick used in the proof of
Lemma~\ref{compression2} of introducing widely spaced ``multi-letter''
letters allows us to give strictly length-decreasing rules. 
The reason we
prefer to give a non-strictly length decreasing algorithm here is
that, while the details of making strictly length-decreasing rules are
not hard, they would obscure the basically simple idea behind this
proof.

Let $\A$ be the working alphabet of our \dehnMachine. We make copies
of $\A$ in different colors, one corresponding to each state of the
machine, and one more in white. The input alphabet, and the copy of
$\A$ corresponding to the start state, we color indigo. At any given
time during the running of the mimic algorithm an initial segment
(possibly empty) of the current word is white. The first colored
letter indicates a state of the \dehnMachine\  and its current position,
and the remaining letters are all indigo.

Let $W$ be the length of the longest \lhs\ of the \dehnMachine.  We
specify the substitutions we wish the mimic to make rather than giving
the precise rules.
Look $W$ letters to either side of the first colored letter. If a
substitution is indicated (according to the state of the first colored
letter) we make it, color up to $W-1$ letters indigo, and one the
color of the new state.  If no substitution is indicated, the first
colored letter is turned white and the next letter is colored with the
new state.
A special case arises for rules which delete
their whole \lh\ side and do not lead to the start state. 
Since there is no suitable letter to color with the new state
the mimic instead writes a colored blank. 
We then have to
add a few more rules which take a colored blank followed by a letter
and write the same letter in that color.

It is not hard to see that the mimic and the \dehnMachine\  make
essentially the same substitutions. When the mimic terminates it is
with a word that is white except for its final letter which indicates
the termination state of the \dehnMachine. If the \dehnMachine\ 
terminates with an empty word, in a non-input state, the mimic leaves
behind a single colored blank. 

To make these rules length decreasing we instead look $2W$ letters
before and after the first colored letter in the incremental case
($2W$ before and $3W$ after if non-incremental).  We run the
\dehnMachine\ as a subword reduction. If no substitutions are made,
the first colored letter is shifted at least $2W$ letters to the right
and two white letters are replaced by one encoding them both. If
subword reduction goes to the left, enough substitutions will be made
to allow us to remove any ``double'' letters we find on the way (these
ending at least $2W$ original letters apart). If we can't see $2W$
(resp.\ $3W$) letters to the right we may have to discard the relevant
rule and allow the mimic machine to terminate a little
prematurely. This only happens in cases where the machine is unable to
make any further substitutions between the current point and the end
of the word.
\end{proof}

Let $(\A,S)$ and $(\A,S')$ be \nira s with reduction maps $P$ and $P'$
respectively. Ideally there would then be a \nira\ with reduction map $P'
\circ P$. Unfortunately this doesn't appear quite to be the case. We
have to allow the resulting algorithm to give its answer in some
``compression alphabet'' $\A^{*n}$, and we may have to allow it to stop
short of reaching its answer. We don't really mind the compression
alphabet, but having a machine stop short is a problem: it gets
in the way of doing any further composition. 

Reluctantly, we must add a further ``flavor'' of \nira\ to our
collection. An {\em nearly strict \nira}\ is one which may include
some length preserving {\em ending rules:} these are end anchored
rules such
that the
resulting algorithm has the property that one of these
will apply only when
the word is reduced with respect to all the
strictly length decreasing rules, and afterwards the word will 
be fully reduced.
We shall not consider here the
question of how to determine, in general, whether a given set of rules
has this property. What is hopefully clear is that if, in the proof of
Lemma~\ref{compression2}, we put back the deleted rules, we obtain a
nearly strict \nira. Modulo writing out, the resulting algorithm
achieves the reduction map of the original algorithm. Furthermore it makes
no difference to the proof if the original algorithm is itself
nearly strict. 

\begin{lemma} \label{compression3}
Let $(\A_0,\A,S)$ be a nearly strict \nira. 
Then for any integer $n>0$ there exists a nearly strict \nira\ 
$(\A_0^{*n},\A^{*(2n-1)},S')$ with the following property. For each
word $w\in(\A_0^{*n})^*$, the reduction of $w$, with respect to
$(\A_0^{*n},\A^{*(2n-1)},S')$, written out is
the reduction of $w$ written out with respect to $(\A_0,\A,S)$. \qed
\end{lemma}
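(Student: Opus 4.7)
The plan is to reuse the construction from the proof of Lemma~\ref{compression2} with two modifications: put back the rules that were discarded there, turning them into end-anchored length-preserving rules of $S'$, and extend the subword-reduction procedure that computes each \rh\ side so that it invokes the ending rules of $(\A_0,\A,S)$ whenever its window reaches the end of the word. The resulting $S'$ then consists of a strictly length-decreasing part, identical to the one built in Lemma~\ref{compression2}, together with a collection of end-anchored length-preserving rules that take over in the ``tail'' region where the strict rules give out.

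Next I would verify that these new end-anchored rules satisfy the nearly-strict conditions. The rules discarded in Lemma~\ref{compression2} were exactly those whose \lh\ sides have fewer than $B$ original letters after the first $S$-\lhs\ and whose subword reduction produces an \rh\ side that cannot be written with strictly fewer letters in the compression alphabet. By the argument given there, such a \lh\ side can occur as a match for the mimic only when it is a suffix of the current word: anywhere further from the end, a strictly length-decreasing rule of the Lemma~\ref{compression2} construction sees the same $S$-\lhs\ first and fires instead. Making these rules end-anchored therefore costs nothing. Once such a rule fires, the written-out word agrees with an intermediate state of $(\A_0,\A,S)$ that is only a bounded number of subword-reduction steps (possibly including ending-rule applications, if the input algorithm is itself nearly strict) away from being fully reduced; by folding those remaining steps into the \rh\ side we can arrange for the output to be reduced with respect to every rule of $S'$, strict or end-anchored.

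The main obstacle is bookkeeping on the $\cC$-letters: we must preserve the invariants from the proof of Lemma~\ref{compression2} that no $\cC$-letter ends to the right of the reduction point and that consecutive $\cC$-letters are separated by at least $(2n-1)(W-1)$ original letters. Because the new end-anchored rules act on a bounded suffix of the current word and never push the reduction point back into its body, we can write their \rh\ sides so that any $\cC$-letters they introduce lie inside that suffix; sparsity is then automatic, because the pre-existing $\cC$-letters were sparse up to the reduction point and the suffix is long enough to absorb the new granularity. Since the construction uses the subword-reduction step of $(\A_0,\A,S)$ as a black box, the presence of ending rules in the original algorithm changes nothing in the argument, which gives the claim.
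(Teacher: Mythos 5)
Your approach is essentially identical to the paper's: the paper's (terse) justification for this lemma is precisely the observation preceding the statement, namely that if one puts back the rules deleted in the proof of Lemma~\ref{compression2} they become end-anchored ending rules, that the result is a nearly strict \nira, and that the argument is unaffected if the original algorithm was itself nearly strict. Your version spells out more carefully why the restored rules may be taken as end-anchored, how to make the nearly-strict condition (``afterwards the word is fully reduced'') hold by folding the bounded remaining reduction into the right-hand sides, and how the $\cC$-letter invariants survive, but these are the details the paper elides rather than a different route.
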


Similarly, when we construct a mimic for a non-incremental
\dehnMachine\  with state, we can avoid stopping prematurely by allowing
ending rules for the resulting machine. We can also allow a
\dehnMachine\ with state to have ending rules. These are length
preserving rules which put it into a {\em terminal state,} a state
without rules which the machine cannot leave. Such a machine can also
be mimicked by a nearly strict \nira, the proof being virtually
unchanged.

We shall show that it is possible to compose nearly strict \nira s. We
can always recover a genuine \nira\, which might stop short, by
discarding the length preserving rules.

\begin{proposition} \label{composition}
Let $(\A,S)$ and $(\A,S')$ be nearly strict \nira s with reduction
maps $P$ and $P'$ respectively. There is a nearly strict \nira\
$(\A,\A',T)$ which mimics the process of first applying $(\A,S)$ and
then applying $(\A,S')$. The reduction map of $(\A,\A',T)$ written out
is the composition $P' \circ P$.
\end{proposition}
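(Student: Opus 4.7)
My plan is to construct a finite state \dehnMachine\ with ending rules whose execution realizes $P'\circ P$, and then invoke the extension of Proposition~\ref{statemachine} (which permits ending rules and yields a nearly strict \nira\ as mimic) to obtain the desired $(\A,\A',T)$. The state machine $M$ will have state set $Q_S\sqcup Q_{S'}$ and start state in $Q_S$: in states of $Q_S$ it applies the rules of $(\A,S)$, and in states of $Q_{S'}$ those of $(\A,S')$. Its working alphabet $\A'$ will be $\A$ together with a marked copy $\hat\A$ and some compressed letters as in Lemma~\ref{compression2}; phase 1 operates on $\A$-letters, phase 2 on $\hat\A$-letters. The length-preserving ending rules of $(\A,S')$ become ending rules of $M$, while the ending rule of $(\A,S)$, which cannot be used as an $M$-ending rule since those put $M$ in a terminal state, is absorbed into the phase-transition step by reencoding it as a length-decreasing rule that produces a compressed letter.

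The main step---and the main obstacle---is engineering the phase transition. A \dehnMachine\ in this framework scans strictly left-to-right, and the only leftward movement is a $W$-letter jump immediately after a length-decreasing substitution; there is no built-in ``rewind'' mechanism, so I cannot simply have phase 1 finish at the right end of the word and then restart phase 2 at the left. My plan is to drive the transition by a cascade of strictly length-decreasing rules: when $(\A,S)$ can no longer reduce, $M$ fires a rule at the right boundary that converts an $\A$-letter together with a neighbor into a single compressed $\hat\A$-letter, then the mandatory $W$-letter leftward jump brings $M$ into contact with the new boundary, the next cascade rule fires there, and so on. Each cascade step uses a slot of compression slack in the spirit of Lemma~\ref{compression2}, so that the written-out content is preserved as $P(w)$ throughout the cascade. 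When the cascade reaches the start of the word, $M$ enters the start state of the phase-2 machine and begins applying $S'$-rules to the $\hat\A$-letters.

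The hard part will be verifying that the cascade rules are well-behaved: strictly length-decreasing, compatible with the leftmost-starting, longest-matching discipline of a \nira, harmless to the preserved content, and combinable with the ordinary rules of $(\A,S)$ and $(\A,S')$ without spurious early interactions. This is routine but delicate, and follows exactly the sparsity and placement bookkeeping already used in the proof of Lemma~\ref{compression2}. Once $M$ is in place, applying the extension of Proposition~\ref{statemachine} yields the nearly strict \nira\ $(\A,\A',T)$, whose reduction map written out is $P'\circ P$ by construction. As remarked after the proposition, discarding the length-preserving ending rules of $T$ recovers a genuine \nira\ that may stop short but whose reduction, when it completes, still agrees with $P'\circ P$.
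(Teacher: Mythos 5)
Your proposal is correct and follows essentially the same route as the paper: build a finite state \dehnMachine\ that first runs $S$, then uses strictly length-decreasing compression rules to backtrack from the right end of the $S$-reduced word to the left end, then runs a compressed version of $S'$ (as in Lemma~\ref{compression3}), handles ending rules of $S$ by folding them into the compression step and ending rules of $S'$ as ending rules of the machine, and finally converts the whole thing into a nearly strict \nira\ via the ending-rules extension of Proposition~\ref{statemachine}. The only nitpick is that the phase-2 rules should be justified by Lemma~\ref{compression3} rather than Lemma~\ref{compression2}, since the strict version of compression may stop short and thus would not realize $P'$ exactly.
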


\begin{proof}
This process can be carried out by a finite state \dehnMachine.  In
its initial state it applies the rules in $S$. Once no more rules
apply and it approaches the end of the word, it switches to a second
state. In this state it simply compresses a little bit until it
arrives at the start of the word again. Then it switches into a third
state where it uses the rules in $S'$ modified, as in
Lemma~\ref{compression3}, for compressed input.

What if $S$ includes ending rules? Without loss of generality,
$S$-\lh\ sides are either $W>3$ letters long, or anchored at both
ends. Ending rules of length $W$ can be combined with compression. 
Rules anchored at both ends can be modified so as to complete the entire
reduction ($P'\circ P$) at a single step. 

With $S$ as above, our machine can recognize the end of a reduced
word by finding any word of $W-1$ letters which is anchored at the end
but not the start. (Shorter entire words being already dealt with.) It
can then start backtracking and compressing.

When the modified $S'$ has ending rules, these become ending rules for
the finite state machine. Finally, we transform the
resulting \dehnMachine\  with state into a nearly strict \nira. 
\end{proof}

\subsection{Group theoretic consequences}
From Lemma~\ref{compression2} and the discussion at the start of this
section we have the following result. 

\begin{theorem}\label{changegens}
Let $G$ be a group with finite semi-group generating sets $\calg$ and
$\calg'$. Then $G$ has a \dehn algorithm with respect to $\calg$ if and only
if it has one with respect to $\calg'$. \qed
\end{theorem}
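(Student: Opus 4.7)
The plan is to apply Lemma~\ref{compression2} to convert the hypothesized Dehn algorithm over $\calg$ into one over $\calg'$, using the compression alphabet $\calg^{*n}$ as a bridge. The statement is symmetric in $\calg$ and $\calg'$, so it suffices to produce a Dehn algorithm over $\calg'$ from one over $\calg$.

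First I would fix a Dehn algorithm $(\calg,\A,S)$ for $G$. Choose $n$ so large that every generator in $\calg'$ can be expressed as a $\calg$-word of length at most $n$; fix such an expression $\iota:\calg'\to\calg^*\subseteq\A^*$ for each. Because $\calg\subseteq\A$, this lets us regard $\calg'$ as a subset of $\calg^{*n}\subseteq\A^{*n}$, and the writing out map $(\calg')^*\to\A^*$ extends $\iota$ multiplicatively. Now apply Lemma~\ref{compression2} to $(\calg,\A,S)$ with this $n$ to obtain an \ira\ $(\calg^{*n},\A^{*(2n-1)},S')$. Restricting the input alphabet to $\calg'\subseteq\calg^{*n}$ gives the triple $(\calg',\A^{*(2n-1)},S')$, which is our candidate Dehn algorithm over $\calg'$.

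Next I would verify the defining property. Let $w\in(\calg')^*$, and let $\overline{w}\in\calg^*$ denote its writing out. Since each letter of $\calg'$ is by construction mapped to a $\calg$-word representing the same group element, $\overline{w}$ and $w$ represent the same element of $G$. By the conclusion of Lemma~\ref{compression2}, the reduction of $w$ under $(\calg',\A^{*(2n-1)},S')$ is empty if and only if the reduction of $\overline{w}$ under $(\calg,\A,S)$ is empty. Because $(\calg,\A,S)$ is a Dehn algorithm, the latter happens precisely when $\overline{w}$ represents the identity, equivalently when $w$ does. This is exactly the definition of a Dehn algorithm over $\calg'$.

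The argument is essentially bookkeeping on top of Lemma~\ref{compression2}; the real content sits in that lemma, which handles the delicate issue of turning the natural ``write out and apply $S$'' construction into a genuine strictly length-decreasing \ira. The only point that needs a moment of care is the asymmetry between the input and working alphabets: we need $\calg'\subseteq\calg^{*n}$ rather than just $\calg'\subseteq\A^{*n}$, so that the writing out of an input word is actually a word in $\calg^*$ on which the hypothesis about the original Dehn algorithm can be invoked. This is ensured by the requirement that $\calg'$ be a set of semi-group generators, giving the expressions $\iota(g')\in\calg^*$ used above.
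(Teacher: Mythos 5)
Your argument is correct and matches the paper's own route: the paper proves this theorem by pointing to Lemma~\ref{compression2} together with the preceding discussion, which sets up exactly the framing you use — expressing each $\calg'$-generator as a $\calg$-word of length at most $n$, viewing $\calg'$ as a subset of $\calg^{*n}$, and applying the compression lemma, then restricting the input alphabet. The verification you add (that $w$ and $\overline{w}$ represent the same group element, and hence the biconditional chains through) is the same bookkeeping the paper leaves implicit behind its \qed.
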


\begin{theorem}\label{subgroups}
Let $G$ be a group and let $H$ be a finitley generated subgroup of $G$. 
If $G$ has a \dehn algorithm, $H$ has one too. 
\end{theorem}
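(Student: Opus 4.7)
The plan is to take the given \dehn algorithm for $G$ and compress its alphabet enough that each $H$-generator fits inside a single compressed letter; then the same rules can be reused for $H$.

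First I would choose a finite semi-group generating set $\calh$ for $H$, obtained from a finite group generating set by adjoining inverses. Since $H\le G$, each element of $\calh$ can be written as a word in $\calg^*$; let $n$ be the maximum length of such an expression. Fixing one such expression for each element of $\calh$ gives an embedding $\calh\hookrightarrow\calg^{*n}\subseteq\A^{*(2n-1)}$, where $(\calg,\A,S)$ is the given \dehn algorithm for $G$.

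Next I would apply Lemma~\ref{compression2} with $\A_0=\calg$ to produce an \ira\ $(\calg^{*n},\A^{*(2n-1)},S')$ whose reduction map $R'$ satisfies: for $w\in(\calg^{*n})^*$, $R'(w)$ is empty if and only if the original $G$-reduction of $w$ written out is empty. Restricting the input alphabet from $\calg^{*n}$ down to $\calh$, I would then propose $(\calh,\A^{*(2n-1)},S')$ as a \dehn algorithm for $H$.

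The verification is nearly immediate. For any $w\in\calh^*$ the written-out version $\hat w\in\calg^*$ represents the same group element, whether read in $H$ or in $G$. Hence $w$ represents the identity in $H$ iff $\hat w$ represents the identity in $G$ iff the $G$-algorithm reduces $\hat w$ to the empty word iff $R'(w)$ is empty. The only subtle point is that the compressed algorithm's ``native'' input alphabet is $\calg^{*n}$ rather than $\calh$, but this is harmless since the \dehn algorithm condition only concerns inputs in $\calh^*$; there is no real obstacle here beyond this bit of bookkeeping, because all the technical work (building strictly length-decreasing rules that mimic the original algorithm on compressed inputs) has already been absorbed into Lemma~\ref{compression2}.
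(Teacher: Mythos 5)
Your proof is correct and is essentially the paper's own argument with one layer unrolled: the paper changes generators of $G$ to a set containing generators of $H$ via Theorem~\ref{changegens} (itself resting on Lemma~\ref{compression2}) and then restricts the input alphabet, whereas you invoke Lemma~\ref{compression2} directly and perform the same restriction, arriving at the same construction.
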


\begin{proof}
Choose a set of generators for $G$ which includes generators for $H$.
With respect to these generators, a \dehn algorithm for $G$ is also one
for $H$. 
\end{proof}

\begin{theorem}\label{finiteindex}
Let $G$ be a group and let $H$ be a finite index subgroup of $G$. 
If $H$ has a \dehn algorithm, $G$ also has one. 
\end{theorem}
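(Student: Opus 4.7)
Write $G$ as the disjoint union of right cosets $Ht_1,\dots,Ht_k$ with $t_1=1$. By Theorem~\ref{changegens} I may choose any finite semi-group generating set for $G$; take $\calg\supseteq\calh$. For each coset index $i$ and each $g\in\calg$ there are unique $j(i,g)\in\{1,\dots,k\}$ and $h_{i,g}\in H$ with $t_ig=h_{i,g}\,t_{j(i,g)}$; fix expressions of the $h_{i,g}$ as words in $\calh^{*}$, and let $L$ be a uniform bound on their lengths. Iterating this identity along $w=g_1g_2\cdots g_n$ gives a canonical factorization $w=\phi(w)\cdot t_{\iota(w)}$, where $\phi(w)\in\calh^{*}$ is the concatenation of the successive $h_{i_{\ell-1},g_\ell}$ and $\iota(w)\in\{1,\dots,k\}$ records the terminal coset. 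Hence $w$ represents the identity in $G$ if and only if $\phi(w)$ represents the identity in $H$ and $\iota(w)=1$.

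The plan is to realize this test via an incremental finite state \dehnMachine\ $M$ and then invoke Proposition~\ref{statemachine}. $M$ has state set $\{q_1,\dots,q_k\}$ with start state $q_1$; the state records the current value of the partial coset as $M$ scans left to right. In state $q_i$, $M$'s rules substitute blocks of $\calg$-letters by the corresponding $\calh$-translations while the transition function updates the coset index to $j(i,g)$. Once the $\calg$-portion of the input has been absorbed, further rules in each state simulate $S_H$ on the $\calh$-tail. The machine empties its tape and halts in state $q_1$ exactly when $\iota(w)=1$ and $\phi(w)$ reduces to the empty word in $H$, i.e.\ precisely when $w=_G 1$; by Proposition~\ref{statemachine} the resulting mimic is an \ira\ \dehn algorithm for $G$.

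The main obstacle is that the translation $g\mapsto h_{i,g}$ is length-increasing (by up to a factor of $L$), while every rule of $M$ must be strictly length-reducing. I would handle this exactly as in the proof of Lemma~\ref{compression2}, by working in a compressed working alphabet whose super-letters encode bounded-length words over $\calg\cup\calh\cup\{t_1,\dots,t_k\}$, and forming each rule of $M$ as a multi-step subword reduction that consumes several input super-letters at once, performs the translation together with a batch of $S_H$-reductions internally, and emits strictly fewer output super-letters. The compressed form of $S_H$ needed for the tail reduction is supplied directly by Lemma~\ref{compression2}, and boundary behaviour at the beginning of the word is absorbed into the start state $q_1$ so that no separately anchored rules are required.
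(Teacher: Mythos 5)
Your plan shares the paper's core idea --- push a coset representative rightward along the word, turning $\calg$-letters into $H$-letters, and then run $H$'s algorithm on the tail --- but the step you yourself flag as ``the main obstacle'' is a genuine gap, and the proposed fix does not close it. You insist on writing each $h_{i,g}$ as a word of length up to $L$ in a \emph{pre-chosen} generating set $\calh$, so the basic translation rule $t_i g \mapsto h_{i,g}\,t_{j(i,g)}$ replaces $2$ letters by up to $L+1$: strictly length-\emph{increasing} once $L>1$. Lemma~\ref{compression2}-style compression cannot repair this, because compression only re-packages an algorithm whose rules already delete letters; it does not turn net growth into net shrinkage. Concretely, if a batch of $m$ super-letters of size $N$ encodes $mN$ $\calg$-letters, its translation contains up to $mNL$ $\calh$-letters, i.e.\ about $mL$ super-letters of the same size, so the ``emits strictly fewer super-letters'' step is impossible in general. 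Nor can the interleaved $S_H$-reductions be relied on to absorb the surplus: there is no guarantee that $\phi(w)$ admits any $S_H$-reduction at all. For instance, take $H$ free on $\{a,b\}$ with free reduction as its algorithm, and put $A=a^{10}$, $B=b^{10}$ into $\calg$; for $w=AB$ the translated word $a^{10}b^{10}$ is already $S_H$-reduced and ten times as long as $w$. Since every intermediate word produced by an \ira\ has length at most that of the input, an intermediate of this form can never arise, so your machine can never actually carry out the translation.

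What is missing is precisely the move the paper makes: rather than compress, it \emph{changes the generating set of $H$}. Let $\cH$ contain, as single fresh letters, the finitely many nontrivial $H$-elements $h$ arising when a triple $g_1g_2g_3$ ($g_i\in\calg$) is rewritten as $[h][t]$ with $t\in\cT$; by Theorem~\ref{changegens}, $H$ has a Cannon's algorithm over $\cH$. The translation rule is then $g_1g_2g_3\mapsto ht$ (or shorter), which is $3\to\leq 2$ and hence strictly length-decreasing, and the running coset representative is carried forward while the word strictly shrinks. In your notation this corresponds to (i) promoting the $H$-parts $h_{i,g_1g_2}$ of $t_ig_1g_2$ to single letters of $\cH$, and (ii) consuming two $\calg$-letters per step rather than one. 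Once the translation rules shrink the word on their own, no finite state machine, Proposition~\ref{statemachine}, or compression lemma is needed: the union of the translation rules $R$ with the $\cH$-generated Cannon's algorithm $S$ for $H$ is already an \ira\ $(\calg,\calg\cup\cH\cup\A,R\cup S)$ for $G$.
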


\def\cH{{\cal H}}
\def\cT{{\cal T}}

\begin{proof} 
Fix a transversal $\cT$ for $[H:G]$ from which we omit the
representative of the identity coset. Fix a finite generating set $\calg$
for $G$ containing $\cT$.  Each word $g_1 g_2 g_3$ with $g_i \in G$ is
equal in $G$ to a word of the form $[h] [t]$, for some $h\in H$ and
$t\in \cT$, where the brackets indicate that each letter may be
omitted. If $g_1 g_2 \in \calg^*$ evaluates to an element of $H$, it can
be written as the $0$ or $1$-letter word $[h]$, again for some $h\in
H$.  As $g_1, g_2, g_3$ vary in $\calg$ we obtain finitely many elements
$h\in H$.  Let $\cH$ be a finite generating set for $H$ containing all
non-identity elements obtained in this way and also, all of $\calg\cap
H$.

The above equalities give rules $R$ of the form $g_1g_2g_3\mapsto ht$
etc. We omit any rules with $g_1 \in \cH$.  The \ira\ $(\calg, \calg\cup\cH,
R)$ turns a word in $\calg^*$ into a word in $\cH^*$ followed by at most
two letters from $\calg$ by pushing a coset representitive along
the word. If an input word to this algorithm represents an element of
$H$, the reduced word will be in $\cH^*$.

Let $(\cH, \A, S)$ be a \dehn algorithm for $H$. We claim that $(\calg,
\calg\cup\cH\cup\A, R\cup S)$ is a \dehn algorithm for $G$. The $R$
rules translate the word into a word in $\cH$ followed by a couple of
letters keeping track of the coset. Then the $S$ rules chase along
behind applying $H$'s \dehn algorithm to the word in $\cH$. The effect
is exactly as if we applied $(\calg, \calg\cup\cH, R)$ first, followed by
applying $(\cH, \A, S)$ to the $\cH^*$ part of the result. If an input
word represents the identity in $G$, the first step produces a
representation of the identity in $\cH^*$ and the second deletes
it. An input word which does not represent the identity will reduce,
either to some word containing letters in $\calg-\cH$, if it does not
evaluate into $H$, or otherwise to a non-empty word in $(\cH\cup\A)^*$.
\end{proof}

The previous theorems hold both for both \dehn algorithms and
non-incremental \dehn algorithms.  The last of these suggests a way
to construct a \dehn algorithm using the \nira\ which does not satisfy
Proposition~\ref{remembers}.  Consider the case of $H$ finite index in
$G$.  It is not hard to parlay a \dehn algorithm for $H$ into a
\nira\ which solves the word problem in $G$ but destroys information in the
case where the word is not in the identity coset.  Here is what it
does: given a word $w$, it first transforms this into a word of the
form $ht$ where $h$ is a word in the generators for $H$ (possibly the
empty word) and $t$ is an element of the transversal, and is empty if
and only if it represents the identity coset.  If $t$ is empty, we now
proceed to reduce $h$ according to the \dehn algorithm for $H$.  On
the other hand, if $t$ is not empty, we can proceed to wantonly
destroy the information in $h$.

\begin{proposition}
There is a \nira\ which is not mimicked by any \ira. \qed
\end{proposition}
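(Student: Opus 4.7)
The plan is to exhibit a concrete \nira\ $N$ whose reduction map $R_N$ conflates infinitely many distinct group elements, and then derive a contradiction via pigeonhole against Proposition~\ref{remembers}.

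I would realize the construction sketched in the paragraph preceding the proposition in its simplest nontrivial form. Take $G=\Z$ with semigroup generators $\calg=\{+1,-1\}$, subgroup $H=2\Z$ (which has a \dehn algorithm as $H\cong\Z$ is word hyperbolic), and transversal $\cT=\{+1\}$. Using Proposition~\ref{composition}, compose two nearly strict \nira s. The first is the coset-pushing rewriting of Theorem~\ref{finiteindex}, turning any $w\in\calg^*$ into the form $ht$, where $h$ is a word in generators for $H$ and $t\in\cT\cup\{\mathrm{empty}\}$ is empty precisely when $w\in_G H$. The second either runs $H$'s \dehn algorithm on $h$ (when $t$ is empty) or, when $t=+1$, destroys $h$ one letter at a time with end-anchored, strictly length-decreasing rules $a\,t\verb'^'\to t\verb'^'$ for each letter $a$ in the working alphabet. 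By construction $N$ decides the word problem for $\Z$, and every $\calg^*$-word representing an odd integer is reduced by $N$ to the single letter $+1$. In particular there is an infinite family $w_0,w_1,w_2,\ldots\in\calg^*$ with $w_k$ representing $2k+1$ and $R_N(w_k)=+1$ for all $k$.

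Suppose, for contradiction, that some \ira\ $M$ mimics $N$. The iff clause of the mimic relation makes $M$ itself a \dehn algorithm for $G$. The ``stopping short'' clause says that for each $w$, the writing out of $R_M(w)$ equals some intermediate state $PQ$ of $N$'s run on $w$, with $P$ a prefix of $R_N(w)$ and $|Q|$ bounded by a constant depending only on $N$. For $w=w_k$ we have $R_N(w_k)=+1$, so $|P|\leq 1$ and the writing out of $R_M(w_k)$ has uniformly bounded length; hence $R_M(w_k)$ takes only finitely many values in $M$'s working alphabet as $k$ varies. Pigeonhole yields $k\neq j$ with $R_M(w_k)=R_M(w_j)$, and Proposition~\ref{remembers} then forces $w_k=_G w_j$, contradicting $2k+1\neq 2j+1$.

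The main obstacle I anticipate is a careful verification that the branching second stage really assembles into a (nearly strict) \nira: one must check that the destruction rules and $H$'s \dehn rules coexist without the latter obstructing the intended destruction in the $t=+1$ branch. Since the destruction rules are end-anchored at $t$, both rule families may fire during reduction but every maximal reduction of $ht$ still terminates at the single letter $t$; Proposition~\ref{composition} then glues the two stages into the desired \nira\ $N$.
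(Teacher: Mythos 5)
Your construction---coset-pushing into the form $ht$ for $H=2\Z$ finite index in $G=\Z$, then branching on whether $t$ is empty and, if not, wantonly destroying $h$---is exactly the construction sketched in the paragraph immediately preceding the proposition, and your pigeonhole argument is the natural way to make precise the paper's implicit appeal to Proposition~\ref{remembers}. One step is slightly underjustified: the paper's definition of ``mimics'' (the written-out result is a stopping-short state of the original) does not by itself assert that $M$ is a \dehn algorithm for $G$, so Proposition~\ref{remembers} does not immediately apply to $M$; you can close this gap without that assumption by noting that $R_M(w_k)=R_M(w_j)$ together with Lemma~\ref{incremental} gives $R_M(w_kw_j^{-1})=R_M(w_jw_j^{-1})$, and the common written-out value would then have to be a stopping-short state of the deterministic algorithm $N$ on both $w_kw_j^{-1}$ (which $N$ does not annihilate) and $w_jw_j^{-1}$ (which it does), an impossibility.
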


\begin{theorem} \label{freeprods}
If $G$ and $H$ both have \dehn algorithms, then so does their free
product $G*H$.
\end{theorem}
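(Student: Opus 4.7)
The plan is to combine Dehn algorithms $(\calg_G,\A_G,S_G)$ for $G$ and $(\calg_H,\A_H,S_H)$ for $H$ after relabelling so that $\A_G\cap\A_H=\emptyset$. I take $\calg=\calg_G\cup\calg_H$ as semi-group generators for $G*H$ and $\A=\A_G\cup\A_H$ as the working alphabet. Let $S$ consist of $S_G\cup S_H$ together with, for every anchored rule \verb'^'$u\to v$ in $S_G$, the non-anchored rules $xu\to xv$ for each $x\in\A_H$, and symmetrically for anchored rules of $S_H$. I claim $(\calg,\A,S)$ is a \dehn algorithm for $G*H$.

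Because the working alphabets are disjoint, every \lh\ side of a rule from $S_G$ lies entirely in $\A_G^*$, every \lh\ side from $S_H$ in $\A_H^*$, and each boundary rule straddles exactly one factor-transition. Given $w\in\calg^*$, decompose it as $w=w_1w_2\cdots w_k$ into maximal runs each lying in one of $\calg_G^*$ or $\calg_H^*$, with the factor alternating. The earliest-ending-rule discipline of the \ira\ forces it to reduce $w_1$ using the rules of the appropriate $S_{X_1}$ first (anchored rules firing naturally since $w_1$ sits at the start), then $w_2$, and so on. Whenever an interior run collapses to the empty word, its two neighbours lie in the same factor and become adjacent; the \ira\ then re-examines the merged run and, by Lemma~\ref{incremental}, continuing the reduction yields $R_X(w_{i-1}w_{i+1})$, where $X$ is their common factor. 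The boundary rules allow anchored $S_{X_i}$-rules to fire at the start of a non-initial run $w_i$, using the trailing letter of the preceding run as trigger.

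This is exactly the free-product normal-form reduction: collapse any run representing the identity in its factor, merge newly adjacent runs, and repeat. For the ``if'' direction, if $w=1$ in $G*H$ then by the normal form theorem for free products this process terminates with the empty word, so the \ira\ reduces $w$ to empty as well. For the ``only if'' direction, if the \ira\ reduces $w$ to empty then every rule application takes place within a single run (boundary rules leaving their boundary letter untouched); Lemma~\ref{incremental} together with Proposition~\ref{remembers} applied inside the relevant factor show that the factor-element represented by each run is preserved throughout. Since the element of $G*H$ represented by the current word is the ordered product of its runs' factor-elements, complete deletion forces $w=1$ in $G*H$.

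The main obstacle is the anchored rules: an anchored rule of $S_G$ is meant to fire only at the very start of the input, yet in the combined algorithm a $G$-run may appear mid-word. The boundary rules $xu\to xv$ resolve this. Because $x\in\A_H$ and $u\in\A_G^*$, the pattern $xu$ can occur only at a genuine factor boundary where $u$ begins a $G$-run, so each boundary rule fires precisely when the corresponding anchored $S_G$-rule would have fired on that run in isolation; the rule-selection discipline of the \ira\ (earliest-ending, then longest, then anchored) makes the same choice in the combined view as $S_G$'s \ira\ makes on the run alone, so no spurious reductions are introduced.
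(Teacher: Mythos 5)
Your construction is exactly the paper's: disjoint alphabets, the same boundary rules $xu\to xv$ simulating anchored rules at run boundaries, and the guiding intuition that the combined \ira\ implements free-product normal-form reduction. So the approach is the same in outline. But your argument for the ``only if'' direction has a real gap. You write that ``the factor-element represented by each run is preserved throughout,'' invoking Lemma~\ref{incremental} and Proposition~\ref{remembers}. The trouble is that once the \ira\ has begun work, the runs contain letters of $\A_G\setminus\calg_G$ (resp.\ $\A_H\setminus\calg_H$) which need not correspond to group elements at all --- that is precisely what the generalized alphabet is for. Proposition~\ref{remembers} is a statement about input words in $\calg^*$, not about intermediate results, so ``the factor-element of a run'' is not a defined quantity at the moment you appeal to it, and saying it is ``preserved'' has no content. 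A similar looseness infects the ``if'' direction, where you identify the \ira's behaviour with the free-product normal-form reduction without actually establishing it.

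The paper closes both gaps with an explicit two-layer induction. First, by induction on the number $n$ of alternating blocks, it shows that when no $x_i$ represents the identity in its factor, $R(x_0\cdots x_n)=R_0(x_0)\cdots R_n(x_n)$ --- so in that case $R(w)$ is non-empty precisely when $w\neq 1$, with no appeal to ``elements of intermediate words.'' Second, when some $x_i$ does represent the identity, it reduces the earliest such block, observes that $R_0(x_0)\cdots R_{i-1}(x_{i-1})x_{i+1}\cdots x_n$ is an intermediate result for both $w$ and the shorter word $w'=x_0\cdots x_{i-1}x_{i+1}\cdots x_n$, and inducts on free-product length. You need this (or an equivalent) inductive scaffolding; as written, the key step is asserted rather than proved.
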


\begin{proof}
\def\mod#1{{\rm (mod\ }#1{\rm )}}

We suppose that $G_0$ and $G_1$ are groups with \dehn algorithms
$({\calg}_0, \A_0, S_0)$ and $({\calg}_1, \A_1, S_1)$ respectively
and that the alphabets for these are disjoint.
Let
\begin{align*}
    T_0 &= \{ au\to av \mid \hat{}u\to v \in S_0, \ a \in \A_1 \} \cr
    T_1 &= \{ au\to av \mid \hat{}u\to v \in S_1, \ a \in \A_0 \} \cr
    S &= S_0 \cup T_0 \cup S_1 \cup T_1 \cr
    \calg &= \calg_0 \cup \calg_1 \cr
    \A &= \A_0 \cup \A_1.
\end{align*}    
We claim that $(\calg,\A,S)$ is a \dehn algorithm for $G_0 * G_1$.    

To see this, consider a word $x_0\ldots x_n$ consisting of alternating
non-empty words from the alphabets $\calg_0$ and $\calg_1$.  For
simplicity, we will assume that we have numbered the two groups so that
$x_i \in \calg_{i \mod 2}$.  We claim that as long as no $x_i$ evaluates
to the identity, $R(x_0\ldots x_n) = R_0(x_0)\ldots R_n(x_n)$.  (Here we
are using $R$ to denote reduction with respect to $S$ and $R_i$ to
denote reduction with respect to $S_{i\mod 2}$.  Likewise, we will refer
to $S_{i\mod 2}$ as $S_i$ and $T_{i\mod 2}$ as $T_i$.)

This claim is true when $n=0$, for then only the rules of $S_0$ apply.
Suppose now that this claim holds for $n=k$.  We wish to establish it
for the case $n=k+1$.  By induction an intermediate result of the
reduction of $x_0\ldots x_{k+1}$ is $R_0(x_0)\ldots R_k(x_k) x_{k+1}$
and the portion before $x_{k+1}$ is fully reduced.  Further, the
assumption that no $x_i$ evaluates to the identity implies that
$R_k(x_k)$ is non-empty.  Accordingly any further reductions are made
either by a a non-anchored rule of $S_{k+1}$ applying entirely inside
$x_{k+1}$ or by a rule of $T_{k+1}$ applying at the last letter of
$R_k(x_k)$ and the beginning of $x_{k+1}$. Any rule of $T_{k+1}$ changes
only the letters of $x_{k+1}$ and performs exactly as an anchored rule
of $S_{k+1}$ would have done had $x_{k+1}$ been the beginning of a word.
These combine to produce $R_0(x_0)\ldots R_k(x_k)R_{k+1}(x_{k+1})$ as
required. 

In particular if no $x_i$ represents the identity, then $x_0 \ldots
x_n$ does not represent the identity and does not reduce to the empty
word.

Now consider the case in which some $x_i$ represents the identity.  We
take $x_i$ to be the earliest such.  The process of reducing the word
$w=x_0\ldots x_n$ produces $R_0(x_0)\ldots R_{i-1}(x_{i-1})x_i\ldots
x_n$ as an intermediate result.  As before, $S_{i}$ and $T_{i}$ conspire
to reduce $x_i$ as $S_i$ would have done had $x_i$ stood alone.  This
produces $R_0(x_0)\ldots R_{i-1}(x_{i-1})x_{i+1}\ldots x_n$.  But this
is an intermediate result of reducing $w'=x_0\ldots x_{i-1}x_{i+1}
\ldots x_n$.  Furthermore, $w'$ represents the identity if and only if
$w$ represents the identity.  But the free product length of $w'$ is two
less than the free product length of $w$.  Thus we may assume
inductively that $w'$ reduces to the empty word if and only if it
represented the identity and we conclude the same about $w$. 

Since this induction reduces free product length by two, it remains to
check two base cases.  One is when the free product length of $w$ is 0,
and here there is nothing to check.  The second is when the free product
length is 1.  This is just application of the \dehn algorithm in one of
the factor groups. 
\end{proof}

We do not know how to prove this for non-incremental \dehn
algorithms.  This raises the following

\begin{question}
{\rm 
Are there groups with non-incremental \dehn algorithms which do not 
have \dehn algorithms?
}
\end{question}

\section{Groups with Expanding Endomorphism} \label{endo}

\def\tinv{t^{-1}}
\def\lg{\ell_{\calg}}

\noindent
Let $G$ be a finitely generated group with finite set of semi-group
generators $\calg$. Let $\lg$ denote the word metric on $G$ with respect
to $\calg$. We say that a homomorphism $\phi : G\rightarrow G$ is an {\em
expanding endomorphism} if $\phi(G)$ is a finite index subgroup of $G$
and there exists a constant $M>1$ such that $\lg(\phi(g)) \geq
M\lg(g)$ for all $g\in G$. Observe that by taking a suitable power of
$\phi$ we may make $M$ as large as we wish.  By taking a finite set of
coset representatives for $\phi(G)\backslash G$ we see that there is a
constant $K$ such that for all $g\in G$, the distance from $g$ to
$\phi(G)$ is at most $K$. We say that $\phi(G)$ is $K$-{\em dense} in
$G$.

Let $\A$ be the finite alphabet $\calg\cup\{ t, \tinv \}$, where $t$ and
$\tinv$ are letters not in $\calg$. We say that a word $w$ in $\A$ is
{\it balanced} (with respect to $t$) if $w$ has the same number of
$t$'s as $\tinv$'s, and further, every initial segment of $w$ has at
least as many $t$'s as $\tinv$'s. Each balanced word $w$ in $\A$
represents an element of $G$: we define the element represented by $t
w \tinv$ to be $\phi$ applied to the element represented by $w$.

The following rules (assuming $\phi$ is chosen so that both $M$ and
$K$ are sufficiently large) give a \dehn algorithm for $G$. In the
rules: $g$ denotes a word in $\calg^*$, and $g'$ and $g''$ denote
geodesic words in $\calg^*$ such that $g = \phi(g')g''$, and $\ell(g'')$
equals the distance from $g$ to $\phi(G)$.

\begin{enumerate}
\item Replace any non-geodesic word $g$ of length $\ell(g)\leq 2K$ by
an equivalent geodesic word. 
\item If $g$ is geodesic, with $\ell(g) = 2K$, replace $g$ 
by $t g'\tinv g''$, or replace $\tinv g$ by $g'\tinv g''$.
\item If $g$ is geodesic, with $\ell(g)\leq 2K$ and $\ell(g'') = 0$
(i.e. $g \in \phi(G)$), replace $\tinv g$ by $g'\tinv$.
\item Replace $t\tinv$ by the empty word. 
\end{enumerate}

These rules clearly map balanced words to balanced words, and do not
change the element of $G$ represented. It is clear that Rules 1, 3 and 4
are strictly length decreasing. 
For Rule~2 to reduce length we need $\ell(g') + \ell(g'') + 2 <
\ell(g)$. We have $\ell(g) = 2K$, $\ell(g'') \leq K$, and
$\ell(g')\leq \frac{1}{M}(\ell(g)+\ell(g''))$. It follows that Rule~2
will be length decreasing if $3/M + 2/K < 1$. 

\begin{lemma} \label{normalform}
Let $G$, $\A = \calg \cup \{t,t^{-1}\}$, $M$ and $K$ be as above. Let
$w$ be the reduction of a word in $\calg$ with respect to Rules 1-4.
Then $w$ has the form $t^n g_n \tinv \ldots \tinv g_1 \tinv g_0$, or
just $g_0$ ($n=0$), such that:
\begin{enumerate}
\item each $g_i$ is a geodesic word in $\calg$ of length less than $2K$;
\item each $g_i$, for $i<n$, is either in $G - \phi(G)$ or it is empty; 
\item if $n>0$, $g_n$ is not empty. 
\end{enumerate}
\end{lemma}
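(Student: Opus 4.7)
The plan is to combine a static analysis of the reduced word $w$ (via the left-hand sides it must avoid) with a dynamic argument tracking where $t$-letters can appear during the reduction.

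First I would enumerate the left-hand-side absences: since $w$ is reduced, it contains no subword $t\tinv$ (Rule 4), no non-geodesic $\calg^*$-subword of length $\leq 2K$ (Rule 1), no geodesic $\calg^*$-subword of length exactly $2K$ (the first variant of Rule 2), and no subword $\tinv g$ with $g \in \phi(G)$ geodesic of length $\leq 2K$ (Rule 3). A $\calg^*$-subword of length $\geq 2K$ would contain a length-$2K$ subword, which is either geodesic (forbidden by Rule 2) or non-geodesic (forbidden by Rule 1); so every maximal $\calg^*$-subword of $w$ has length less than $2K$ and, by the absence of Rule 1 left-hand sides, is geodesic, giving condition~(1).

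Next I would prove the global fact that every $t$ of $w$ lies at the very start. I would argue this by induction on the number of reduction steps, with the invariant that at every stage the current word has the form $t^m h$ where $h$ contains no $t$. The invariant holds initially (the input is in $\calg^*$), and Rules 1, 3, the second variant of Rule 2, and Rule 4 preserve it (they act entirely inside $h$, or delete a terminal $t$ of the prefix together with the following $\tinv$). The delicate case is the first variant of Rule 2, the only rule introducing a new $t$. The key observation is that when this rule fires on a subword $g$ of length $2K$, the letter immediately preceding $g$ must be either $t$ or the start of the word: were it $\tinv$, then the longer left-hand side $\tinv g$ (of the second variant of Rule 2, or of Rule 3 when $g \in \phi(G)$) would end at the same position and would be preferred by the IRA; and were it a $\calg$-letter $h$, then $hg[1..2K-1]$ is itself a left-hand side of Rule 1 or Rule 2 ending one letter earlier and would fire first by the IRA's leftmost-ending preference. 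Thus the new $t$ can only extend the existing $t$-prefix, preserving the invariant.

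Granted the resulting global form $w = t^n g_n \tinv g_{n-1} \tinv \ldots \tinv g_0$, conditions (2) and (3) would follow directly from the left-hand-side absences: each $g_i$ with $i < n$ is preceded in $w$ by a $\tinv$, so $g_i$ non-empty and in $\phi(G)$ would fire Rule 3 (condition (2)); and if $n > 0$ with $g_n$ empty, the final $t$ of $t^n$ would be immediately followed by $\tinv$, forbidden by Rule 4 (condition (3)). I expect the main obstacle to be the preemption argument inside the inductive step: one must check carefully that after the IRA backtracks following each substitution, the only positions at which the $t$-introducing variant of Rule 2 can actually be the chosen rule are those where the letter to the left is $t$ or the start of the word. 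I expect this to be a routine but slightly fiddly case analysis on which left-hand sides overlap at which positions.
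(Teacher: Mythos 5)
Your proof is correct and follows essentially the same approach as the paper's: show inductively that all $t$'s remain at the front, observe that the first variant of Rule~2 is the only $t$-producing rule, argue by a preemption analysis that it can only fire at the start of the word or after a $t$, and then read off conditions (1)--(3) from the forbidden left-hand sides. If anything, your handling of the $\tinv$-preceding case is a touch more precise than the paper's (which asserts a rule would apply strictly to the left), since in that case the competing left-hand side $\tinv g$ actually ends at the same position and wins by the IRA's length tie-break, exactly as you say.
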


\begin{proof}
We show first that all $t$'s appear at the start of $w$. Initially
this is vacuously true. The only rule whose application could make
this untrue is 2 since it is the only rule which creates $t$'s.  But
Rule 2 is only applied at the start of the word, or when the
immediately preceding letter is $t$, for otherwise one of Rules 1-3
would apply at least one letter to the left.

Rule 1 ensures that each $g_i$ is geodesic, while Rule 2 ensures
that the length of each $g_i$ is less than $2K$.  Rule 3 ensures that
each $g_i$, for $i<n$, is either in $G - \phi(G)$ or it is empty.
Rule 4 ensures that $g_n$ is not the empty word if $n>0$.
\end{proof}

\begin{theorem}
Rules 1-4 reduce each word in $\calg$ to the empty word if and only if
that word represents the identity element of $G$.
\end{theorem}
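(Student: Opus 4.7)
The plan is to handle the two directions separately. The easy direction---if the algorithm reduces $w$ to the empty word then $w$ represents the identity---is immediate, since each of Rules~1--4 preserves the element of $G$ represented by the current (balanced) word, and the empty word represents the identity. So the content lies in the converse.

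For the converse, suppose $w\in\calg^*$ represents the identity. Its reduction $R(w)$ also represents the identity and, by Lemma~\ref{normalform}, has the form
\[
R(w) \;=\; t^n g_n t^{-1} g_{n-1} t^{-1} \cdots t^{-1} g_1 t^{-1} g_0
\]
(or just $g_0$ in the case $n=0$), satisfying the three listed properties. Unfolding the defining recursion $[tut^{-1}]=\phi([u])$ for balanced $u$, a short induction on $n$ yields
\[
[R(w)] \;=\; \phi^n([g_n])\cdot\phi^{n-1}([g_{n-1}])\cdots\phi([g_1])\cdot[g_0].
\]
A key auxiliary fact is that $\phi$ is injective: from $\lg(\phi(h))\geq M\lg(h)$ with $M>1$, any $h$ with $\phi(h)=e$ has $\lg(h)=0$.

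Now I peel the factors off the right end of this product. Assuming $n\geq 1$, the leading block $\phi^n([g_n])\cdots\phi([g_1])=\phi\bigl(\phi^{n-1}([g_n])\cdots[g_1]\bigr)$ lies in $\phi(G)$, so $[R(w)]=e$ forces $[g_0]\in\phi(G)$; combined with constraint~(2) this means $g_0$ is empty. What remains, $\phi\bigl(\phi^{n-1}([g_n])\cdots[g_1]\bigr)=e$, gives $\phi^{n-1}([g_n])\cdots[g_1]=e$ by injectivity of $\phi$---a product of the same shape with $g_1$ now in the role of $g_0$. Iterating, each $g_i$ with $i<n$ must be empty. What finally remains is $\phi^n([g_n])=e$, so $[g_n]=e$ by injectivity of $\phi^n$; since $g_n$ is a geodesic word representing the identity, $g_n$ is empty, contradicting constraint~(3), which says $g_n$ is non-empty when $n>0$. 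Hence $n=0$, so $R(w)=g_0$ is a geodesic word representing the identity, and therefore empty.

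The only non-routine items are the product formula for $[R(w)]$---obtained by a direct induction once one checks that the interior substring $t^{n-1}g_n t^{-1}\cdots t^{-1} g_1$ is itself balanced, so the recursive interpretation applies at each step---and the observation about injectivity of $\phi$. The conceptual engine of the proof is the tension between constraint~(2), which forbids any intermediate $g_i$ from lying in $\phi(G)$, and the fact that any leading block of the form $\phi(\cdot)$ in the product forces what follows to lie in $\phi(G)$; this is what makes the iteration collapse the normal form entirely.
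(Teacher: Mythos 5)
Your proof is correct and follows essentially the same approach as the paper: both rely on the normal form from Lemma~\ref{normalform} together with constraint~(2) to force each $g_i$ with $i<n$ to be empty, and then invoke constraints~(1) and~(3) to conclude $n=0$ and $g_0$ empty. The only difference is presentational: you iterate, peeling $g_0,g_1,\dots$ from the right and applying injectivity of $\phi$ at each step, whereas the paper identifies the least index $i$ with $g_i$ nontrivial and argues in a single step that $g$ would lie in a non-identity coset of $\phi^{i+1}(G)$.
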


\begin{proof}
Let $g = t^n g_n \tinv \ldots \tinv g_1 \tinv g_0$ be the reduction
of a word in $\calg$ representing the identity in $G$. Let $i$ be the
least integer such that $g_i$ is non-trivial. Then if $i<n$, 2 in
Lemma~\ref{normalform} implies that $g$ belongs to a non-$1$ coset of
$\phi^{i+1}(G)$. Therefore $g_i$ is trivial for $i<n$. 

Hence $g_n$ represents the identity in $G$. By 1, $g_n$ is geodesic and
therefore trivial. By 3, $n=0$ and so $g$ itself is trivial. The
converse is clear.
\end{proof}

The process we have just described is essentially that of writing the
decimal expansion of a number.  Indeed, if you apply this to the sum
of $572$ $1$'s, $1+\dots+1$, using the endomorphism $n \mapsto 10 n$
you will get $t^{2}5t^{-1}7t^{-1}2$.  This is nothing but the decimal
$572$ with $t$'s performing the function of place notation.
Unfortunately, our decimal expansions can be a bit perverse.  In
addition to the numerals for the numbers $0$ through $9$, we also have
numerals for the numbers $-1$ through $-9$.  Let us give these the
numerals $\hat 1$ through $\hat 9$.  If you count up to $1,000,000$
and then count back down to 1, you will wind up writing $1$ as
$t^{6}1t^{-1}\hat 9t^{-1}\hat 9t^{-1}\hat 9t^{-1}\hat 9t^{-1}\hat
9t^{-1}\hat 9$. i.e., as $1 \hat 9 \hat 9 \hat 9 \hat 9 \hat 9 \hat
9$.  Evidently, we can write an arbitrarily long word for the number
$1$. 

We say that a \dehn algorithm is {\em finite to one} if as $x$ varies
over all words representing a fixed element of $G$, $R(x)$ takes only
finitely many values. 

\begin{remark}
{\rm
There are \dehn algorithms which are not finite to one 
off the identity.
}
\end{remark}

For the purposes of Section~\ref{geofin} we would 
like to modify our \dehn algorithm to avoid this behavior.

Given a  reduced word $w=t^{n}g_{n}t^{-1}g_{n-1}\dots g_{1}
t^{-1}g_{0}$, we call $n$ the {\em height} of $w$.
Choose a positive integer $N$ such that $(M/3)^N > K$. We add the
following additional rules to our system. 

\begin{enumerate} 
\item[(5)] If $w$ is a reduced word, as above, with height at
most $N$, such
that $\lg(w) < \frac{1}{2} \ell(g_0)$, replace $w$ by
an equivalent geodesic word in $\calg^*$.  
\end{enumerate}

Since there are only finitely many reduced words of height at most
$N$, this introduces only finitely many rules.

\begin{lemma} \label{heightbound}
Let $w$ be the reduction of a word in $\calg^*$ with respect to rules
1-5. If the height of $w$ is $n$, and $w$ does not represent the
identity, then $\lg(w) \geq (M/3)^n$. 
\end{lemma}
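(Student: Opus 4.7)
I would prove the lemma by induction on $n$, exploiting the expansion factor $M$ of $\phi$ and using Rule~5 as a safety net for small heights. The base case $n=0$ is immediate, since $w=g_{0}$ is a non-empty geodesic word in $\calg^{*}$ and therefore $\lg(w)=\ell(g_{0})\geq 1=(M/3)^{0}$.

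For the inductive step, write $w=t u t^{-1}g_{0}$, where $u=t^{n-1}g_{n}t^{-1}\cdots t^{-1}g_{1}$ is a balanced sub-word of height $n-1$, so that $[w]=\phi([u])\,g_{0}$. The first sub-task is to verify that $[u]\neq 1$. I would peel off outer layers: if $[u]=1$ then $\phi([u'])g_{1}=1$ for the balanced inner word $u'$, forcing $g_{1}\in\phi(G)$; condition~(2) of Lemma~\ref{normalform} then makes $g_{1}$ empty, and injectivity of $\phi$ (immediate from $\lg(\phi(g))\geq M\lg(g)$) gives $[u']=1$. Iterating this peel down to the innermost layer forces $g_{n}=1$, contradicting condition~(3) of Lemma~\ref{normalform}.

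Next, $u$ is itself reduced as a standalone word, because every sub-string of $u$ is a sub-string of $w$ and $w$ contains no left-hand side of any of rules 1--5. Applying the inductive hypothesis to $u$ yields $\lg([u])\geq(M/3)^{n-1}$, and hence
\[\lg(w)=\lg\bigl(\phi([u])\,g_{0}\bigr)\geq M\lg([u])-\ell(g_{0})\geq 3(M/3)^{n}-\ell(g_{0}).\]
If $\ell(g_{0})\leq 2(M/3)^{n}$ this inequality already gives $\lg(w)\geq(M/3)^{n}$. Otherwise $\ell(g_{0})>2(M/3)^{n}$, and combined with $\ell(g_{0})<2K$ this forces $(M/3)^{n}<K$, so by the choice of $N$ we have $n\leq N$. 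But $w$ then has the shape of a Rule~5 left-hand side of height at most $N$, and $w$ being reduced rules out the condition $\lg(w)<\tfrac{1}{2}\ell(g_{0})$, so $\lg(w)\geq\tfrac{1}{2}\ell(g_{0})>(M/3)^{n}$.

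The injectivity-and-coset peeling used to establish $[u]\neq 1$ is the step that requires the most care; once it is in hand, the rest is a short combination of the expansion bound $\lg(\phi(g))\geq M\lg(g)$ and the case split handled by Rule~5, and the factor $M/3$ in the statement is exactly what the optimization between the expansion contribution $M\lg([u])$ and the possibly large tail $\ell(g_{0})$ forces.
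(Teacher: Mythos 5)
Your proof is correct and follows essentially the same route as the paper: induction on the height, peeling off $t\,(\cdot)\,t^{-1}g_0$, using the expansion inequality $\lg(\phi(h))\ge M\lg(h)$ together with the Rule~5 bound $\lg(w)\ge\tfrac12\ell(g_0)$. The one place you differ is the case split: the paper splits on $n\le N$ versus $n>N$ (in the first case using $3\lg(w)\ge\lg(w)+\ell(g_0)\ge M\lg(w')$ plus induction, in the second using $\ell(g_0)<2K<2(M/3)^{n-1}$), whereas you split on $\ell(g_0)\le 2(M/3)^n$ versus $\ell(g_0)>2(M/3)^n$. Your split is arguably a bit cleaner, since in the large-$\ell(g_0)$ case you observe that $\ell(g_0)<2K$ already forces $n\le N$, after which Rule~5 alone gives $\lg(w)\ge\tfrac12\ell(g_0)>(M/3)^n$ without invoking the inductive hypothesis at all. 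Your explicit peeling argument for $[u]\ne 1$ is also fine; the paper merely asserts that $w'$ is not the identity, but the justification is exactly the coset-peeling argument you give (and which the paper itself uses in the theorem that follows).
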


\begin{proof}
For height $n = 0$ the lemma is clear. For $n > 0$ we can write $w =
tw'\tinv g_0$, where $w'$ is reduced, of height $n-1$, and not the
identity, and $g_0$ is geodesic. 
For $n \leq N$, Rule 5 ensures that $\lg(w) \geq
\frac{1}{2} \ell(g_0)$. It follows that $3\lg(w) \geq \lg(w)+\ell(g_0) \geq
\lg(tw'\tinv) \geq M\lg(w')$. By induction the
lemma holds for all $n\leq N$. 

For $n > N$, writing $w$ as before, $\lg(w) \geq M\lg(w') -
\ell(g_0)$. By induction, $\lg(w') \geq (M/3)^{n-1}$. Also $\ell(g_0) \leq
2K$ which, by our choice of $N$, is less than $2(M/3)^{n-1}$. Therefore
$\lg(w) \geq (M - 2)(M/3)^{n-1} \geq (M/3)^n$ since $M > 3$. 
\end{proof}

\begin{corollary} \label{expendisvnil}
If $G$ admits an expanding endomorphism then $G$ is virtually
nilpotent.
\end{corollary}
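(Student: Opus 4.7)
The plan is to show that $G$ has polynomial growth and then invoke Gromov's polynomial growth theorem. The rules 1--5 just constructed give a \dehn algorithm for $G$, and its reduction map $R$ will serve as the tool for counting elements of bounded word length.

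First, I will use Proposition~\ref{remembers} to turn the growth question into a counting problem about reduced words. The proposition says that if $R(x)=R(y)$ for $x,y\in\calg^*$ then $x$ and $y$ represent the same element of $G$. Contrapositively, for each $g\in G$ one may pick a representative $w_g\in\calg^*$ (say, a geodesic one), and the assignment $g\mapsto R(w_g)$ is then injective. Consequently $|B_L|$, where $B_L=\{g\in G : \lg(g)\leq L\}$, is bounded above by the number of distinct reduced words of the form $R(w_g)$ as $g$ ranges over $B_L$.

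Second, I will bound that number using Lemmas~\ref{normalform} and~\ref{heightbound}. By Lemma~\ref{normalform}, each $R(w_g)$ has the form $t^n g_n\tinv\cdots\tinv g_0$ with each block $g_i$ a geodesic word in $\calg^*$ of length less than $2K$. Since $R(w_g)$ represents $g\in B_L$, Lemma~\ref{heightbound} gives $(M/3)^n\leq \lg(g)\leq L$, hence $n\leq \log_{M/3}L$. The number of possible blocks of length less than $2K$ is a constant $C$ depending only on $\calg$ and $K$, so the number of such reduced words of height at most $\log_{M/3}L$ is bounded by $(C+1)^{\log_{M/3}L+1}$, which is polynomial in $L$ (of degree $\log_{M/3}(C+1)$).

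Combining these steps, $|B_L|$ grows at most polynomially in $L$, and Gromov's polynomial growth theorem forces $G$ to be virtually nilpotent. The conceptual core of the argument is the first step: the \dehn algorithm's canonical-form property (Proposition~\ref{remembers}) is what allows the explicit height bound to be converted into a bound on the growth function. The only external ingredient is Gromov's theorem, which I treat as a black box; all remaining bookkeeping is elementary counting and so presents no real obstacle.
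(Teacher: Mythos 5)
Your proof is correct and follows essentially the same route as the paper: use the height bound of Lemma~\ref{heightbound} to show that the normal forms from rules 1--5 have length $O(\log\lg(g))$, so the ball of radius $L$ injects into a set of polynomially many words, and then invoke Gromov's theorem. The paper states this more tersely (and doesn't explicitly cite Proposition~\ref{remembers} for injectivity, since choosing one short representative per element already suffices), but the counting argument and its conceptual core are the same.
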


\begin{proof}
Each element $g\in G$ can be represented by a word $w$ whose length
is bounded by $k\ln(\lg(g)+1)$, for some $k>0$.  Since there are
only polynomially many such words, $G$ has polynomial growth and
hence is virtually nilpotent.
\end{proof}

It is apparently unknown whether all torsion free nilpotent groups
have expanding endomorphisms. However, we will see in the next section
that they all have \dehn algorithms.

\begin{theorem}\label{expando}
If $G$ has an expanding endomorphism, then $G$ has a finite to one 
\dehn algorithm.
\end{theorem}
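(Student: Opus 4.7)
My plan is to check two things about the system of Rules 1--5: first, that it is still a valid \dehn algorithm for $G$, and second, that it is finite-to-one.

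For the first claim, I would observe that Rule~5 is strictly length decreasing---its hypothesis $\lg(w) < \half \ell(g_0)$ ensures the output, a geodesic in $\calg^*$ of length $\lg(w)$, is shorter than the input of length at least $\ell(g_0)$---and preserves the represented element. The set of permissible \lh\ sides of Rule~5 is finite, since the height is bounded by $N$ and each $g_i$ has length less than $2K$. Thus Rules 1--5 together constitute an \ira. To see that this \ira\ is a \dehn algorithm, note that the output of any terminating run is reduced with respect to every rule, and in particular is reduced under Rules 1--4 alone. Combined with the preceding theorem---Rules 1--4 alone reduce identity inputs to the empty word, and each rule preserves the represented element---this yields the required equivalence.

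For the finite-to-one property, fix $g\in G$ and bound the set of reduced words representing $g$. By Lemma~\ref{normalform}, any such word is determined by its height $n$ together with the geodesic factors $g_0,\dots,g_n \in \calg^*$, each of length less than $2K$. If $g$ is the identity, only the empty word occurs by the argument above. Otherwise Lemma~\ref{heightbound} gives $\lg(g) \geq (M/3)^n$, yielding an upper bound $n \leq \log_{M/3}\lg(g)$ that depends only on $g$. The number of possible geodesic factors is at most $C := \sum_{k=0}^{2K-1} |\calg|^k$, so the total number of reduced words representing $g$ is at most $C^{n+1}$, which is finite.

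The main obstacle is the bookkeeping needed to confirm that inserting Rule~5 does not spoil the correctness established for Rules 1--4; once this is in place, the height bound of Lemma~\ref{heightbound} combined with the structural constraints of Lemma~\ref{normalform} forces finiteness immediately.
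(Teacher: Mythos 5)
Your proof is correct and takes essentially the same approach as the paper: both arguments reduce to the observation that Lemma~\ref{heightbound} bounds the height (and hence, via Lemma~\ref{normalform}, the total length and number of possible choices for the factors $g_i$) of any reduced normal form representing a fixed non-identity element $g$, in terms of $\lg(g)$. The paper's stated proof is terse---``the length $\lg(g)$ gives a bound for the maximum length of any reduced normal form representing $g$, therefore there are at most finitely many''---so your more explicit bound $n \le \log_{M/3}\lg(g)$ and factor count are just filled-in detail, not a different route. The only genuine addition in your write-up is the explicit check that Rules~1--5 form a valid \dehn algorithm; the paper simply introduces Rule~5 and proceeds without re-verifying correctness, implicitly leaning on the fact that Rule~5 is length-decreasing, element-preserving, and (since it only fires when $n\ge 1$, which forces the \lh\ side to begin with $t$) compatible with the normal-form structure of Lemma~\ref{normalform}. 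One small caveat: in your verification you invoke the statement of the earlier theorem about Rules~1--4, but since the Rules~1--5 output lies in $\A^*$ rather than $\calg^*$, what is really needed is the argument in that theorem's proof applied to a normal-form word, not the theorem statement itself---the logic is the same, just worth phrasing carefully.
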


\begin{proof}
As in Corollary~\ref{expendisvnil}, the length $\lg(g)$ of an element
$g\in G$ gives a bound for the maximum length of any reduced normal
form representing $g$. Therefore there are at most finitely many possible
reduced normal forms for each element.
\end{proof}

\begin{remark}
{\rm
The results of this section remain valid under the weaker
hypothesis that $G$ has a finite index subgroup $H$ which admits an
expanding endomorphism $\phi$ with respect to $\lg$. The only change that needs
to be made is to replace $\phi(G)$ with $\phi(H)$ throughout. 
}
\end{remark}

This has the following corollary which we will need in our work on
geometrically finite groups. 

\begin{corollary} \Label{expandIsNGeodesic}
Let $G$ be finitely generated and suppose that $G$ has a finite index subgroup
which has an expanding endomorphism.  Let $\calg$ be a set of semi-group
generators for $G$.  Then for any $N>0$ there exists a \dehn algorithm as
above, with working alphabet $\A = \calg \cup \{t,t^{-1}\}$, such that any
normal form word $w$ with $\lg(w)<N$ is a geodesic word in $\calg^*$.  In
particular, this holds when $G$ is finitely generated and virtually abelian. 
\end{corollary}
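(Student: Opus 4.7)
The plan is to show that, by replacing the expanding endomorphism with a sufficiently high iterate, the \dehn algorithm of Theorem~\ref{expando} (extended as in the Remark to the case where only a finite-index subgroup $H$ admits an expanding endomorphism $\phi$) already has the desired property, with no further rules needed. Given $N > 0$, I would first replace $\phi$ by $\phi^k$, which is again an expanding endomorphism of $H$ with expansion constant $M^k$ (by iterating the defining inequality $\lg(\phi(h)) \ge M\lg(h)$). Choosing $k$ large, I can ensure $M^k > 3N$ while still arranging the auxiliary inequalities $3/M + 2/K < 1$ (for Rule~2 to reduce length) and $(M/3)^{N'} > K$ (for the parameter $N'$ of Rule~5) with the resulting constants; relabel $M^k$ as $M$ and the new density constant as $K$, and run the construction.

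Next I would apply Lemma~\ref{heightbound} to an arbitrary reduced word $w$ with $\lg(w) < N$. If $w$ represents the identity then, by the correctness of the \dehn algorithm, $w$ is empty and the statement holds trivially. Otherwise, letting $n$ be the height of $w$, Lemma~\ref{heightbound} gives $\lg(w) \ge (M/3)^n$, so $n \ge 1$ would force $\lg(w) \ge M/3 > N$, a contradiction. Hence $n = 0$ and $w = g_0 \in \calg^*$. It remains only to show $w$ is geodesic. Since $w$ is reduced, Rule~1 forbids it from containing any non-geodesic subword of length at most $2K$, and Rule~2 forbids any geodesic subword of length exactly $2K$; together these force $\ell(w) < 2K$, and then Rule~1 applied to the whole of $w$ forces $w$ itself to be geodesic.

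For the concluding assertion, a finitely generated virtually abelian group $G$ contains a finite-index subgroup $H \cong \Z^d$ (pass to the torsion-free part of any abelian finite-index subgroup). The doubling map $h \mapsto 2h$ scales the asymptotic norm associated with $\lg_G|_H$ by a factor of $2$, so for elements of large $\lg_G$-length the ratio $\lg_G(2h)/\lg_G(h)$ is close to $2$. A sufficiently high iterate of doubling then satisfies $\lg_G(2^k h) \ge M\lg_G(h)$ uniformly in $h$, since the only potential obstruction is a bounded set of ``small'' nonzero $h$ for which $\lg_G(2^k h) \to \infty$ as $k \to \infty$. Thus the hypothesis on finite-index subgroups with expanding endomorphisms holds, and the general statement applies.

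The main obstacle I expect is the bookkeeping around the auxiliary constants when iterating $\phi$: one must check that the density constant $K$ for $\phi^k$ does not grow so rapidly as to invalidate the inequalities needed by the construction of Theorem~\ref{expando}. In practice this is routine, since those inequalities only get easier to satisfy as $M$ increases, but it is the step that requires care in a fully written-out proof. \qed
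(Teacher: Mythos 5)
Your proof is correct and follows essentially the same approach as the paper: raise $\phi$ to a power to make $M/3 > N$, invoke Lemma~\ref{heightbound} to force height $0$, and observe (via Rules 1 and 2, or directly from Lemma~\ref{normalform}) that a height-$0$ normal form is a geodesic word in $\calg^*$. The only small point worth flagging is that for the virtually abelian case your appeal to the "asymptotic norm" (i.e., the fact that $\lg_G|_H$ is within bounded distance of a genuine norm, which is Burago's theorem) is heavier than needed; the cruder bi-Lipschitz comparison $\lambda^{-1}\|h\|_1 - C \leq \lg_G(h) \leq \lambda\|h\|_1 + C$ already gives $\lg_G(2^k h) \geq M\lg_G(h)$ for all nonzero $h$ once $2^k$ is large relative to $\lambda$, $C$ and $M$, with no reference to asymptotics.
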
 

\begin{proof}
Let $H$ be a finite index subgroup with expanding endomorphism. (In the
virtually abelian case, $H$ is a finite index free abelian subgroup.)
Raising to a sufficient power furnishes us with an expanding endomorphism of
$H$, with expansion factor $M$ such that $M/3 > N$. By
Lemma~\ref{heightbound}, any normal form word $w$ with $\lg(w)<N$ has height
$0$.
\end{proof}

We will call such a \dehn algorithm {\em $N$-geodesic}.

We will say that a rule $u\to v$ is a {local geodesic} rule if both
$u$ and $v$ are words in the group generators and $v$ is a geodesic.
We will say that an $N$-geodesic \dehn algorithm $(\calg,\A,S)$ is
{\em $N$-tight} if $(\calg,\A,S\cup R)$ is also a $N$-geodesic \dehn
algorithm whenever $R$ is a finite set of local geodesic rules and
the left hand sides of $S$ and $R$ are disjoint.

We record here the following observation.

\begin{proposition} \Label{expandIsTight}
The $N$-geodesic \dehn algorithms of Corollary~\ref{expandIsNGeodesic}
are $N$-tight. 
\end{proposition}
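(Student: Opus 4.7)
The plan is to verify the two clauses of $N$-tightness: that $(\calg, \A, S \cup R)$ is still a \dehn algorithm for $G$, and that any normal form $w$ with $\lg(w) < N$ is a geodesic in $\calg^*$. The key structural observation underlying both is that each $R$-\lh side is non-geodesic---an $R$-rule $u \to v$ satisfies $\ell(u) > \ell(v) = \lg(u)$ since $v$ is a geodesic for the element represented by $u$---and that each $R$-\rh side lies in $\calg^*$. Thus $R$-rules never introduce $t$ or $\tinv$, and their \lh sides cannot appear as subwords of any geodesic portion of the current word.

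For the \dehn algorithm property, I will show that the terminal word $w^*$ of the augmented algorithm on any $w \in \calg^*$ has exactly the structure of Lemma~\ref{normalform}. Each of the four properties proved there extends: (a) all $t$'s remain at the start, since the only $t$-introducing rule (the first form of $S$-rule~2) still cannot fire with a $\calg$-letter to its left---that letter combined with its length-$2K$ \lh side would yield an earlier-ending $S$-\lh side of rule~1 or rule~2---while $R$-rules touch no $t$'s at all; (b) each $g_i$ is geodesic of length $<2K$, for otherwise a length-$2K$ subword is either geodesic or non-geodesic, triggering an $S$-rule of type~2 or type~1 respectively, contradicting that $w^*$ is $(S \cup R)$-reduced; (c) each $g_i$ with $i<n$ is in $G-\phi(G)$ or empty, else rule~3 fires; and (d) $g_n$ is non-empty when $n>0$, else rule~4 fires. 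Since every rule of $S \cup R$ preserves the group element represented, the proof of Theorem~\ref{expando} then applies verbatim to show $w \in \calg^*$ represents the identity if and only if $w^* = \epsilon$.

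For the $N$-geodesic property, the proof of Lemma~\ref{heightbound} needs only that the terminal word is reduced with respect to rules~1--5, which still holds for the augmented algorithm (we only add rules, never delete them). Thus if $w^*$ has height $n \le N$, the fact that rule~5 does not fire forces $\lg(w^*) \ge \tfrac12\ell(g_0)$, and the induction in that lemma yields $\lg(w^*) \ge (M/3)^n$. Since $M$ was chosen with $M/3 > N$, any terminal word $w^*$ with $\lg(w^*) < N$ must have height $n=0$, so $w^* = g_0 \in \calg^*$, and by (b) above $g_0$ is geodesic.

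The main obstacle is verifying that intermediate applications of $R$-rules cannot disturb the invariants that drive Lemma~\ref{normalform}'s proof, in particular invariant (a). This rests on two facts already noted: $R$-rules act strictly within $\calg^*$-substrings and so neither create $t$'s nor alter the relative positions of any $t$ or $\tinv$, and the disjointness of $S$- and $R$-\lh sides (together with the \ira's tiebreak rules) means that at any moment when the $S$-only algorithm would have fired an $S$-rule, either that same rule still fires or an earlier-ending $R$-rule fires first and only shortens/geodesicizes the relevant $\calg^*$-substring. In either case the terminal word is $(S \cup R)$-reduced, and the structural analysis carries through.
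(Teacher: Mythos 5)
Your argument is correct, but it takes a genuinely different route from the paper's. You allow the local geodesic rules $R$ to fire and then re-verify, for the augmented system $S\cup R$, the structural facts that drive the correctness proof: the invariant that all $t$'s stay at the front (so the terminal word has the shape of Lemma~\ref{normalform}), reducedness with respect to rules 1--5, and hence Lemma~\ref{heightbound}, together with the fact that every rule preserves the group element. The paper instead disposes of the whole question with one observation: an $R$-\lh\ side is a non-geodesic word in $\calg^*$ (its \rh\ side is a strictly shorter equivalent geodesic), so by disjointness from the rule~1 \lh\ sides it must be longer than $2K$; but then any occurrence of it contains an $S$-\lh\ side (a non-geodesic of length at most $2K$, or a geodesic of length exactly $2K$) ending strictly earlier, and since the \ira\ always chooses the earliest-ending \lh\ side, no rule of $R$ is ever applied. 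The augmented algorithm therefore performs literally the same computation as the original, and both clauses of $N$-tightness are immediate. Your version is longer but somewhat more robust, since it does not need to know that the $R$ rules are never invoked, only that they act inside $\calg^*$-blocks and preserve the element represented; note that this last point (that the \rh\ side of a local geodesic rule is a geodesic \emph{for the same element}) is an assumption your argument makes explicit and which the paper's argument also needs, since it is what forces $R$-\lh\ sides to be non-geodesic. One small caution in your invariant (a): besides a $\calg$-letter to the left, you should also note the case where the letter preceding a rule~2 (first form) \lh\ side is $\tinv$; there the longer \lh\ side $\tinv g$ of rule~2 or~3 ends at the same letter and is preferred by the tie-breaking, so again no stray $t$ is created. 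This is the same point already implicit in the proof of Lemma~\ref{normalform}, so it does not affect the substance of your argument.
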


\begin{proof}
In this case, the rules of $S$ determine that any sufficiently long
geodesic $g$ is replaced with a word $tg't^{-1}g''$ where $g$ and $g'$
are shorter geodesics.  On the other hand, $S$ replaces any
non-geodesic shorter than this with a geodesic.  In particular, no
rule of $R$ is ever applied.
\end{proof}

\section{Nilpotent Groups}\label{nilpt}

\def\gmt{{\gamma(t)}}
\def\fm{{f_\mu}}
\def\fmgt{{\fm(\gmt)}}
\def\fms{\fm_*}
\def\lm{\lim_{\Delta t \to 0}}
\def\vdt{v(\Delta t)}

\def\unr{U_n(\R)}
\def\unz{U_n(\Z)}
\def\ddt{\frac{d}{dt}}

\noindent 
In this section we shall show that every finitely generated, torsion
free nilpotent group embeds in a group which has an expanding endomorphism.  It
follows from Theorem~\ref{expando} and Theorem~\ref{subgroups} that
every torsion free nilpotent group has a \dehn algorithm. Since every
finitely generated nilpotent group is virtually torsion free, it
follows by Theorem~\ref{finiteindex} that every finitely generated
virtually nilpotent group has a \dehn algorithm.

We start with the group of $n\times n$ upper triangular matrices with
$1$'s on the diagonal.  Those with integer entries we denote by
$U_n(\Z)$, those with real entries we denote by $U_n(\R)$.

For each $\mu \in \R$ define $f_\mu:U_n(\R)\to U_n(\R)$ as follows.
If $x=(x_{ij})\in U_n(\R)$, set 
\[(f_\mu(x))_{ij} = \mu^{j-i}x_{ij}.\]
It is not hard to see that $f_\mu$ is a homomorphism and that if $\mu
\in \Z$ then $f_\mu:U_n(\Z)\to U_n(\Z)$.

Let us fix a generating set $\calg$ for $U_n(\Z)$ and endow $U_n(\R)$ with a
left invariant metric.

\begin{lemma}
The action of $\unz$ on $\unr$ is co-compact by isometries and fixed
point free.
\end{lemma}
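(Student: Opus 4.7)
The first two claims are immediate. The metric on $\unr$ is left-invariant by choice, so for each $\gamma \in \unz$ the translation $x \mapsto \gamma x$ preserves distances. The action is free because $\gamma x = x$ in a group forces $\gamma$ to be the identity by right multiplying by $x^{-1}$.

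For cocompactness I would exhibit an explicit precompact fundamental domain. Write $I$ for the identity matrix, let $E_{ij}$ denote the matrix with a $1$ in position $(i,j)$ and zeros elsewhere, and set $e_{ij} = I + E_{ij} \in \unz$ for $i<j$. Call the entry $x_{i\ell}$ of $x$ of \emph{depth} $\ell - i$. The basic formula is that left multiplication by $e_{ij}^m$ replaces row $i$ of $x$ by row $i$ plus $m$ times row $j$: all rows other than $i$ are preserved, all entries $x_{i\ell}$ with $\ell < j$ are preserved, $x_{ij}$ is shifted by $m$, and each $x_{i\ell}$ with $\ell > j$ is shifted by $m\,x_{j\ell}$. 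In particular $e_{ij}^m$ affects only entries of depth $\geq j - i$.

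I would sweep through the superdiagonals in order of increasing depth. At depth $k$, for each $i = 1, \ldots, n-k$ in turn, left-multiply by $e_{i, i+k}^{m_i}$ where $m_i \in \Z$ is chosen so that the current value of $x_{i, i+k}$ is brought into $[0, 1)$. Different operations within the depth-$k$ sweep act on different rows, so they do not interfere with one another's target entries; subsequent operations of depth $k' > k$, performed in later sweeps, touch only entries of depth $\geq k'$, so they cannot disturb any of the normalizations already achieved at depth $k$. After the depth-$(n-1)$ sweep, the resulting matrix has every superdiagonal entry in $[0, 1)$, giving a fundamental domain contained in $\{x \in \unr : x_{ij} \in [0, 1) \text{ for all } i < j\}$, whose closure is compact.

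\emph{Main obstacle.} The only delicate point is verifying that the stratified procedure really leaves the already-normalized shallower entries fixed and terminates with everything in $[0,1)$. Both facts reduce to the explicit left-multiplication formula displayed above: once that formula is on the page, the rest is bookkeeping, and no induction on $n$ is required.
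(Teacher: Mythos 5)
Your proof is correct and takes essentially the same approach as the paper: reduce the off-diagonal entries stratum by stratum so that later normalizations cannot disturb earlier ones, observing that a unipotent multiplication tied to depth $k$ alters only entries of depth $\geq k$. The paper implements this by right-multiplying by one matrix supported on each superdiagonal while you left-multiply by elementary matrices entry by entry, but the depth-preservation observation and the resulting co-compactness argument are the same.
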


\begin{proof}
We wish to see that each $x=(x_{ij}) \in \unr$, is a bounded distance
away from some $z = (z_{ij}) \in \unz$.  For $p = 1,\dots n-p$, let
$m(x_{1},\dots,x_{n-p})$ be the upper triangular matrix with 1's on
the diagonal, $x_1, \dots, x_{n-p}$ located distance $p$ above the
diagonal, and 0's everywhere else.  Notice that multiplying
$x=(x_{ij})$ by such an $m_p$ leaves unchanged the entries of $x$
below the $p^{\rm th}$ off-diagonal and adds $x_1, \dots, x_{n-p}$ to
the entries of $x$ on the $p^{\rm th}$ off-diagonal.  Consequently, we
can choose $m_1, m_2,\dots , m_{n-1}$ each with entries between 0 and
1 so that $z=x m_1 m_2 \dots m_{n-1} \in \unz$.  Since the entries of
each $m_i$ are bounded in size, so is their product.  Hence $z$ is a
bounded distance away from $x$ as required.
\end{proof}

Consequently,
\begin{lemma}
There is $\lambda=\lambda_\calg$ so that the embedding of the Cayley graph
$\Gamma_\calg(\unz)$ into $\unr$ is a $(\lambda,0)$ quasi-isometry.
\end{lemma}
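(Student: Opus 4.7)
The plan is to bound the word metric $d_\calg$ on $\unz$ and the restriction of the (left-invariant) $\unr$-metric to $\unz$ by one another, linearly and with no additive error, and then use the previous lemma to upgrade this quasi-isometric embedding to a quasi-isometry. It suffices to compare distances between vertices, since each edge of the Cayley graph has length $1$ and can be mapped to a geodesic of bounded length in $\unr$.

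For the upper bound, given vertices $g,h \in \unz$, write $g^{-1}h$ as a geodesic word of length $n = d_\calg(g,h)$ over $\calg$. By left-invariance of the metric together with the triangle inequality,
\[ d_\unr(g,h) \;\le\; n \cdot \max_{s \in \calg} d_\unr(e,s) \;=\; \lambda_1 \cdot d_\calg(g,h), \]
where $e$ is the identity and $\lambda_1$ depends only on the finite set $\calg$.

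For the lower bound, the previous lemma furnishes $D > 0$ such that every point of $\unr$ lies within $D$ of some element of $\unz$. Take an $\unr$-geodesic from $g$ to $h$, sample it at unit spacing, and replace each sample by a nearest element of $\unz$. This produces a chain $g = z_0, z_1, \ldots, z_m = h$ in $\unz$ with $m \le d_\unr(g,h) + 1$ and $d_\unr(z_i, z_{i+1}) \le 1 + 2D$ for each $i$. Since $\unz$ acts freely and cocompactly by isometries on the locally compact space $\unr$, it is a discrete subset, so $B = \{z \in \unz : d_\unr(e,z) \le 1 + 2D\}$ is finite and each of its elements has $\calg$-length at most some constant $L$. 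Therefore $d_\calg(g,h) \le L \cdot m \le L \cdot d_\unr(g,h) + L$.

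To remove the additive constant $L$, invoke discreteness once more: there is $\delta > 0$ with $d_\unr(x,y) \ge \delta$ for all distinct $x,y \in \unz$, so $L \le (L/\delta) \cdot d_\unr(g,h)$ whenever $g \ne h$, yielding $d_\calg(g,h) \le \lambda_2 \cdot d_\unr(g,h)$ for $\lambda_2 = L(1 + 1/\delta)$. Setting $\lambda = \max(\lambda_1, \lambda_2)$ gives the desired $(\lambda,0)$ embedding on vertices, and the quasi-density afforded by the previous lemma upgrades this to a $(\lambda, 0)$ quasi-isometry. The main obstacle is securing additive constant exactly zero rather than merely bounded: that rests on the existence of a strictly positive minimum separation $\delta$ between distinct orbit points, which comes from the free, cocompact, isometric action on the locally compact space $\unr$.
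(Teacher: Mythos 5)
Your proof is correct and follows essentially the same route as the paper: invoke the Milnor--\v{S}varc argument and then use the positive minimum $\unr$-separation between distinct lattice points to absorb the additive constant into the multiplicative one, which is exactly what the paper means by saying the fixed-point-free case allows $\epsilon=0$. (One small imprecision: freeness plus cocompactness does not by itself imply discreteness of the orbit --- one needs properness of the action --- but here $\unz$ is visibly a discrete subgroup of the Lie group $\unr$, so the required $\delta>0$ exists and the argument goes through.)
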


\begin{proof} 
It is a standard result that a co-compact discrete
isometric action on a geodesic metric space induces a
$(\lambda,\epsilon)$-quasi-isometry.  It is not hard to see that in
the case of a fixed point free action, we may take $\epsilon=0$.
\end{proof}

\begin{lemma} 
For $\mu >1$, the map $f_\mu$ is a $\mu$-expanding endomorphism on $\unr$.
That is, for $x,y \in \unr$, $d(f_\mu(x),f_\mu(y)) \ge \mu d(x,y)$.  
\end{lemma}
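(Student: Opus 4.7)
The plan is to reduce the global metric inequality to an infinitesimal statement at every point using left invariance, and then to verify the infinitesimal statement by a direct computation on the Lie algebra of strictly upper triangular matrices.

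First, I would take the left-invariant metric to be the Riemannian one whose inner product on $T_e\unr$ (identified with the Lie algebra $\mathfrak{u}_n$ of strictly upper triangular matrices) makes the elementary matrices $\{E_{ij} : 1 \le i < j \le n\}$, where $E_{ij}$ has a single $1$ in position $(i,j)$ and $0$'s elsewhere, an orthonormal basis. The freedom to choose the left-invariant metric in the preceding lemma is what we are using here.

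Second, I would compute $(df_\mu)_e$. The map $f_\mu$ multiplies the $(i,j)$-coordinate by $\mu^{j-i}$, so in the basis $\{E_{ij}\}$ its derivative at $e$ is diagonal with eigenvalue $\mu^{j-i}$ on $E_{ij}$. Since $j-i\ge 1$ and $\mu>1$, every eigenvalue is at least $\mu$, so $\|(df_\mu)_e v\|\ge \mu\|v\|$ for every $v\in T_e\unr$.

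Third, I would transfer this bound to every tangent space. Because $f_\mu$ is a homomorphism, $f_\mu\circ L_g = L_{f_\mu(g)}\circ f_\mu$, and differentiating gives
\[
(df_\mu)_g \;=\; (dL_{f_\mu(g)})_e\circ (df_\mu)_e \circ \bigl((dL_g)_e\bigr)^{-1}.
\]
Since left translations are isometries of the Riemannian metric, the two outer factors are linear isometries between tangent spaces, so the infinitesimal expansion bound $\|(df_\mu)_g w\|\ge \mu\|w\|$ holds at every $g\in\unr$. Integrating along any piecewise smooth path $\gamma$ from $x$ to $y$, the curve $f_\mu\circ\gamma$ has length at least $\mu\cdot\mathrm{length}(\gamma)$; taking the infimum over $\gamma$ yields $d(f_\mu(x),f_\mu(y))\ge \mu\, d(x,y)$.

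The main point to be careful about is the choice of metric: the bound ``by factor exactly $\mu$'' really depends on the inner product at the identity being compatible with the weight decomposition of $(df_\mu)_e$. With an arbitrary left-invariant Riemannian metric one would only obtain expansion by some factor comparable to $\mu$, with a multiplicative constant depending on the metric, which would be good enough for any eventual application but not for the sharp statement as given. Choosing the metric as above makes the statement clean, and the rest of the argument is bookkeeping via left invariance.
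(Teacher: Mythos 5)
Your proposal is correct and takes essentially the same approach as the paper: fix the left-invariant Riemannian metric making the elementary matrices orthonormal at the identity, observe that $(df_\mu)_e$ is diagonal on that basis with eigenvalues $\mu^{j-i}\ge\mu$, and transport the infinitesimal bound to every tangent space via left invariance and the homomorphism property before integrating over paths. Your closing remark about the sharp constant $\mu$ depending on the choice of inner product is a useful clarification of what the paper treats as a ``without loss of generality'' step.
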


\begin{proof}
It suffices to show that $f_\mu$ is everywhere infinitesimally
$\mu$-expanding. For $X\in\unr$, a tangent vector at $X$ is given by
$\ddt (X + At)$ where $A = (a_{ij})$ with $a_{ij} = 0$ for all $j\leq
i$. Without loss of generality we may assume $\|\ddt (I + At)\| =
(\sum a_{ij}^2)^{1/2} =: n(A)$. Then by left invariance and linearity
of matrix multiplication
\[\left\|\ddt (X + At)\right\| = n(X^{-1}A).\]
Clearly $f_\mu$ extends linearly to all upper-triangular matrices and we
have
\begin{align*}
\left\|\ddt f_\mu(X + At)\right\| &= \left\|\ddt (f_\mu(X) + f_\mu(A)t)\right\| \cr
        &= n(f_\mu(X^{-1}A)) \geq \mu n(X^{-1}A).
\end{align*}
\end{proof}

\begin{lemma}
For $\mu \in \Z$, $f_\mu(\unz)$ is finite index in $\unz$.
\end{lemma}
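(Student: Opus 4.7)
The plan is to exhibit an explicit finite index subgroup of $U_n(\mathbb{Z})$ sitting inside $f_\mu(U_n(\mathbb{Z}))$; once that is done, finite index of the image follows immediately.

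For the finite index subgroup of $U_n(\mathbb{Z})$, I would use a principal congruence subgroup. For any nonzero integer $N$, entrywise reduction modulo $N$ gives a homomorphism $U_n(\mathbb{Z})\to U_n(\mathbb{Z}/N\mathbb{Z})$, and since the target is a finite group, its kernel $\Gamma(N)$ has finite index in $U_n(\mathbb{Z})$. The natural choice here is $N=\mu^{n-1}$ (assuming $\mu\ne 0$; for $\mu=0$ the image collapses to $\{I\}$ and the statement is vacuous/false, so this case is tacitly excluded).

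Next I would show $\Gamma(\mu^{n-1})\subseteq f_\mu(U_n(\mathbb{Z}))$. Given $y=(y_{ij})\in\Gamma(\mu^{n-1})$, each off-diagonal entry satisfies $\mu^{n-1}\mid y_{ij}$; since $j-i\le n-1$ we therefore have $\mu^{j-i}\mid y_{ij}$ for every $i<j$. Define $x\in U_n(\mathbb{Z})$ by setting $x_{ii}=1$, $x_{ij}=y_{ij}/\mu^{j-i}$ for $i<j$, and $x_{ij}=0$ for $i>j$; this is a bona fide integer upper triangular matrix with $1$'s on the diagonal. A direct check from the defining formula gives $(f_\mu(x))_{ij}=\mu^{j-i}x_{ij}=y_{ij}$, so $f_\mu(x)=y$.

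Putting the two steps together: $f_\mu(U_n(\mathbb{Z}))$ contains the finite index subgroup $\Gamma(\mu^{n-1})$, hence has finite index in $U_n(\mathbb{Z})$. There is no real obstacle here; the only thing to verify carefully is the componentwise preimage construction, which is a one-line unwinding of the definition of $f_\mu$. The essential point being used is simply that the expansion factor $\mu^{j-i}$ applied by $f_\mu$ to position $(i,j)$ is bounded above by $\mu^{n-1}$, so a single congruence condition modulo $\mu^{n-1}$ suffices to guarantee all the required divisibilities at once.
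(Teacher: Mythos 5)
Your proof is correct, and it takes a genuinely different route from the paper's. The paper simply says the argument is ``the same as the proof that $\unz$ is co-compact in $\unr$,'' referring to the iterative adjustment argument: multiply on the right by off-diagonal matrices $m_1,\dots,m_{n-1}$, working outward from the first superdiagonal, so that the product lands in $f_\mu(\unz)$; since the $m_p$ can be taken with entries in $\{0,1,\dots,\mu^p-1\}$, there are only finitely many products and hence finitely many cosets. Your argument instead exhibits the principal congruence subgroup $\Gamma(\mu^{n-1})=\ker\bigl(U_n(\Z)\to U_n(\Z/\mu^{n-1}\Z)\bigr)$ as a finite-index subgroup contained in the image, using the observation that $\mu^{n-1}\mid y_{ij}$ implies $\mu^{j-i}\mid y_{ij}$ for all $i<j$, so the entrywise preimage $x_{ij}=y_{ij}/\mu^{j-i}$ is integral and $f_\mu(x)=y$. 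This is cleaner and more self-contained: it avoids re-running the iterated-multiplication argument, yields an explicit finite-index subgroup (and hence an explicit index bound $\le |U_n(\Z/\mu^{n-1}\Z)|$), and makes the structural reason for finite index transparent. The paper's route has the minor virtue of unifying the two lemmas under one computation, but your version is arguably the more natural algebraic proof. Your remark about excluding $\mu=0$ is appropriate; the paper only ever applies the lemma with $\mu>1$, but the statement as written does need this caveat (for $|\mu|=1$ the map is an automorphism and the claim is trivial).
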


\begin{proof}
The proof is the same as the proof that $\unz$ is co-compact in
$\unr$. 
\end{proof}

Consequently,
\begin{lemma}
If $\mu > \lambda^2$, and $\mu \in \Z$ then $f_\mu$ is an expanding
endomorphism of $\unz$. \qed
\end{lemma}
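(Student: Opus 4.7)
The plan is to combine the three lemmas immediately preceding the statement. We have already shown (i) that $f_\mu$ maps $\unz$ to itself when $\mu\in\Z$, (ii) that $f_\mu(\unz)$ has finite index in $\unz$ when $\mu\in\Z$, and (iii) that $f_\mu$ expands distances in $\unr$ by a factor of at least $\mu$. So only the word-length expansion condition needs checking, and the key tool is the $(\lambda,0)$-quasi-isometry between the Cayley graph $\Gamma_\calg(\unz)$ and $\unr$.

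First I would note that (i) and (ii) give the finite-index hypothesis in the definition of an expanding endomorphism. It remains to exhibit a constant $M > 1$ such that $\lg(f_\mu(g)) \geq M\,\lg(g)$ for every $g \in \unz$. Using the $(\lambda,0)$-quasi-isometry, for every $g \in \unz$ we have
\[
\tfrac{1}{\lambda}\, d(1,g) \;\leq\; \lg(g) \;\leq\; \lambda\, d(1,g),
\]
and the same inequalities apply to $f_\mu(g) \in \unz$. Combining these with the infinitesimal expansion on $\unr$ gives
\[
\lg(f_\mu(g)) \;\geq\; \tfrac{1}{\lambda}\, d(1,f_\mu(g)) \;=\; \tfrac{1}{\lambda}\, d(f_\mu(1),f_\mu(g)) \;\geq\; \tfrac{\mu}{\lambda}\, d(1,g) \;\geq\; \tfrac{\mu}{\lambda^2}\, \lg(g),
\]
using $f_\mu(1) = 1$ since $f_\mu$ is a homomorphism. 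Setting $M = \mu/\lambda^2$, the hypothesis $\mu > \lambda^2$ gives $M>1$, which is exactly what is required.

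There is no real obstacle here; the lemma is essentially a bookkeeping step that assembles the three preceding lemmas via the quasi-isometry. The only mild subtlety worth mentioning is that the quasi-isometry is $(\lambda,0)$ rather than $(\lambda,\epsilon)$ with $\epsilon > 0$, which is what makes the above chain of inequalities produce a clean multiplicative bound rather than an affine one that could fail for small $g$. This sharpness was the reason for the earlier remark that the action being fixed-point-free lets us take $\epsilon=0$.
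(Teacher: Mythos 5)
Your proof is correct and takes exactly the route the paper intends: the paper leaves this lemma with no written proof (just a \qed), treating it as an immediate consequence of the three preceding lemmas together with the definition of a $(\lambda,0)$-quasi-isometry, and your chain of inequalities $\lg(f_\mu(g)) \geq \tfrac{1}{\lambda}d(1,f_\mu(g)) \geq \tfrac{\mu}{\lambda}d(1,g) \geq \tfrac{\mu}{\lambda^2}\lg(g)$ is precisely that bookkeeping. Your closing observation about why the additive constant $\epsilon=0$ matters (a nonzero $\epsilon$ would spoil the multiplicative bound near the identity) is a nice explicit justification of a point the paper also makes but does not spell out.
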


Hence, by Theorem~\ref{expando},
\begin{lemma}
$\unz$ has a finite to one \dehn algorithm. \qed
\end{lemma}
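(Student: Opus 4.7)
The plan is to appeal directly to Theorem~\ref{expando}, which asserts that any group admitting an expanding endomorphism has a finite to one \dehn algorithm. All the preceding lemmas in this section have been building toward exactly this hypothesis for $\unz$.

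First I would fix the generating set $\calg$ for $\unz$ and the left invariant metric on $\unr$ used throughout the section, so that the quasi-isometry constant $\lambda = \lambda_\calg$ from the second lemma is fixed. Next I would choose any integer $\mu > \lambda^2$; the preceding lemma then supplies us with a homomorphism $f_\mu : \unz \to \unz$ whose image is of finite index and which, viewed on $\unr$, is $\mu$-expanding in the ambient metric.

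The only real content is translating the $\mu$-expansion of $f_\mu$ on $\unr$ into an expansion of word length on $\unz$. This is where the $(\lambda, 0)$-quasi-isometry between $\Gamma_\calg(\unz)$ and $\unr$ does the work: for $g \in \unz$ we have
\[
  \lg(f_\mu(g)) \;\geq\; \tfrac{1}{\lambda} d(1, f_\mu(g))
     \;\geq\; \tfrac{\mu}{\lambda} d(1, g)
     \;\geq\; \tfrac{\mu}{\lambda^2} \lg(g),
\]
and by our choice $\mu > \lambda^2$ the expansion constant $M := \mu/\lambda^2$ is strictly greater than $1$. Combined with the finite index of $f_\mu(\unz)$ in $\unz$ from the previous lemma, this verifies the definition of an expanding endomorphism given at the start of Section~\ref{endo}.

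With the hypothesis of Theorem~\ref{expando} in place, the conclusion that $\unz$ has a finite to one \dehn algorithm is immediate. There is no real obstacle here; this lemma is a bookkeeping step that collects the quasi-isometry estimate, the finite index statement, and the infinitesimal expansion lemma into the single package required by Theorem~\ref{expando}. The only mild subtlety is making sure $\mu$ is chosen to be a \emph{strictly} greater than $\lambda^2$ integer, so that the induced constant $M$ on the discrete group is genuinely bigger than $1$ rather than merely $\geq 1$.
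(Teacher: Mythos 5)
Your proof is correct and follows precisely the same route as the paper: the preceding lemmas establish that $f_\mu$ is an expanding endomorphism of $\unz$ for integer $\mu>\lambda^2$, and the conclusion is then an immediate application of Theorem~\ref{expando}. Your quasi-isometry computation fills in the implicit step behind the paper's ``Consequently'' lemma, but no new ideas are involved.
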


Now it is a theorem (see \cite{Seg}, Chapter~5) that
\begin{theorem}
If $G$ is a finitely generated, torsion free nilpotent group then $G$
embeds in $\unz$, for some $n>0$. \qed
\end{theorem}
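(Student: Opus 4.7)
The plan is to use Mal'cev's linearization theorem, proving the embedding via the Mal'cev completion of $G$. First I would refine the lower central series of $G$. Since $G$ is torsion-free nilpotent, each successive quotient $\gamma_i G / \gamma_{i+1} G$ of the lower central series is a torsion-free finitely generated abelian group, so one can refine to a central series $G = G_0 \supseteq G_1 \supseteq \cdots \supseteq G_n = 1$ all of whose factors $G_{i-1}/G_i$ are infinite cyclic. Picking $x_i \in G_{i-1}$ representing generators of these factors yields a Mal'cev basis: every element of $G$ has a unique normal form $x_1^{a_1}\cdots x_n^{a_n}$ with $a_i \in \Z$.

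Next I would construct the Mal'cev completion $G_\R$. By the Hall--Petresco collection process, the exponents of a product of two normal forms are polynomial functions in the exponents of the factors, with rational coefficients. This allows the coordinates to be extended from $\Z^n$ to $\R^n$, producing a simply connected nilpotent Lie group $G_\R$ containing $G$ as a discrete cocompact subgroup.

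Next I would embed $G_\R$ faithfully into $\unr$. Since the Lie algebra $\mathfrak{g}$ of $G_\R$ is a finite-dimensional nilpotent real Lie algebra, Engel's theorem (the nilpotent, elementary special case of Ado's theorem) provides a faithful representation of $\mathfrak{g}$ into the strictly upper triangular matrices $\mathfrak{u}_n(\R)$ for some $n$. Exponentiating, which is a diffeomorphism on both sides since both groups are simply connected nilpotent, gives an embedding $G_\R \hookrightarrow \unr$. The rationality of the Mal'cev structure lets us pick a basis of $\mathfrak{g}$ defined over $\Q$, so that $G$ lands inside $U_n(\Q)$.

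The final and most delicate step --- the main obstacle --- is clearing denominators to land in $\unz$. Here I would conjugate by the diagonal matrix $D = \mathrm{diag}(1, N, N^2, \ldots, N^{n-1})$; a direct calculation shows this rescales the $(i,j)$ entry of any upper unitriangular matrix by $N^{j-i}$, which is a positive integer power of $N$ strictly above the diagonal. Because $G$ is finitely generated, only finitely many denominators appear in the entries of the images of a fixed generating set, so taking $N$ to be any positive integer divisible by all of these denominators forces $D^{-1}\rho(G)D \subseteq \unz$, which is the desired embedding.
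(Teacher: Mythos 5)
The paper offers no proof of this statement: it simply cites Segal's \emph{Polycyclic Groups}, Chapter~5. Your sketch follows the standard route (Mal'cev basis, Mal'cev completion, Birkhoff--Ado embedding, clear denominators), and the last three steps are essentially correct; in particular the denominator-clearing step works precisely because $\unz$ is a group, so conjugating the finitely many generators into $\unz$ by $D$ automatically places all of $\rho(G)$ there. A small misattribution: a faithful representation of an abstract finite-dimensional nilpotent Lie algebra by nilpotent matrices is Birkhoff's theorem (the nilpotent case of Ado), not Engel's theorem --- Engel only says that a Lie algebra \emph{already given} as nilpotent endomorphisms can be simultaneously strictly upper-triangularized, so you still need a faithful nilpotent representation to feed into it.

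The genuine gap is in your first step. It is \emph{not} true that the lower central factors $\gamma_i G/\gamma_{i+1}G$ of a finitely generated torsion-free nilpotent group are torsion-free, so you cannot refine $\gamma_\bullet G$ to a series with infinite cyclic factors in the way you describe. Concretely, in the discrete Heisenberg group $H=\langle x,y\rangle$ with $z=[x,y]$ central, the subgroup $G=\langle x^2,y,z\rangle$ is finitely generated, torsion-free and nilpotent, but $[G,G]=\langle z^2\rangle$, so the image of $z$ in $G/[G,G]$ has order $2$. The standard repair is to replace the lower central series either by the upper central series, whose successive quotients \emph{are} torsion-free in any torsion-free nilpotent group (a theorem of Mal'cev), or by the isolated (rational) lower central series $\sqrt{\gamma_i G}=\{g\in G: g^m\in\gamma_i G \text{ for some } m\geq 1\}$. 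Either produces the desired central series with infinite cyclic factors, and from there your argument goes through unchanged.
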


Hence, by Theorem~\ref{subgroups} and Theorem~\ref{finiteindex},
\begin{theorem}\label{virtuallyNilpotent}
If $G$ is finitely generated and virtually nilpotent, then $G$ has a
\dehn algorithm. \qed
\end{theorem}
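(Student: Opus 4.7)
The plan is to assemble \textbf{Theorem~\ref{virtuallyNilpotent}} from four ingredients already in hand: (i) the explicit expanding endomorphism $f_\mu$ on $U_n(\Z)$, which by \textbf{Theorem~\ref{expando}} gives $U_n(\Z)$ a \dehn algorithm; (ii) the embedding theorem that every finitely generated torsion-free nilpotent group sits inside some $U_n(\Z)$; (iii) \textbf{Theorem~\ref{subgroups}}, which transports a \dehn algorithm from a group to any finitely generated subgroup; and (iv) \textbf{Theorem~\ref{finiteindex}}, which transports a \dehn algorithm from a finite-index subgroup up to the whole group.

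First I would reduce the virtually nilpotent case to the torsion-free nilpotent case. Let $G$ be finitely generated and virtually nilpotent, and let $N\le G$ be a nilpotent subgroup of finite index. Every finitely generated nilpotent group contains a torsion-free nilpotent subgroup of finite index (the torsion elements of a finitely generated nilpotent group form a finite characteristic subgroup; quotienting or, more directly, passing to the intersection of kernels of the finitely many homomorphisms to the finite torsion quotient), so we may pick $N_0\le N$ torsion-free nilpotent with $[N:N_0]<\infty$. Then $N_0$ has finite index in $G$.

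Next I would install a \dehn algorithm on $N_0$. By the embedding theorem cited from Segal, $N_0$ injects into $U_n(\Z)$ for some $n$. We have just shown, via the expanding endomorphism $f_\mu$ and Theorem~\ref{expando}, that $U_n(\Z)$ admits a (finite-to-one) \dehn algorithm. Applying Theorem~\ref{subgroups} to the inclusion $N_0\hookrightarrow U_n(\Z)$ equips $N_0$ itself with a \dehn algorithm.

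Finally, since $N_0$ has finite index in $G$, Theorem~\ref{finiteindex} lifts a \dehn algorithm from $N_0$ to one for $G$, completing the proof. There is no real obstacle here; the only point that needs a moment's care is step one (that a finitely generated nilpotent group is virtually torsion-free), which is standard and justifies the paragraph preceding the theorem in the paper. Everything else is a bookkeeping composition of the closure results already proved.
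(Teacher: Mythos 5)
Your proposal is correct and follows exactly the paper's route: expanding endomorphism on $U_n(\Z)$ gives it a \dehn algorithm via Theorem~\ref{expando}, the Segal embedding plus Theorem~\ref{subgroups} transfers it to finitely generated torsion-free nilpotent groups, and Theorem~\ref{finiteindex} lifts it from a finite-index torsion-free nilpotent subgroup to the whole virtually nilpotent group. Your explicit intermediate reduction (virtually nilpotent $\Rightarrow$ contains a f.g.\ torsion-free nilpotent subgroup of finite index) is exactly the bookkeeping the paper leaves implicit in the sentence ``every finitely generated nilpotent group is virtually torsion free,'' so no discrepancy.
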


\section{Relatively hyperbolic groups} \label{geofin}

In this section we prove a theorem concerning \dehn algorithms for (strongly)
relatively hyperbolic groups.  We first proved this in the context of
geometrically finite hyperbolic groups and these are the parade examaple of
relatively hyperbolic groups.  The statement and proof here are close
parallels of the geometrically finite case.

There are multiple equivalent definitions of what it means for a group to be
(strongly) hyperbolic relative to a collection of subgroups $\{\calp_1,\dots,
\calp_k\}$.  These are equivalent to Farb's \cite{F} definition of relative
hyperbolicity together with his bounded coset penetration property.  Usage of
the term relatively hyperbolic varies slightly in that it is often possible to
drop the requirement that the subgroups be finitely generated.  In our usage
these will all be finitely generated.

The key geometric result is the relation the geodesics and horoballs
of the negatively curved space to the geodesics of subspace upon which
the group acts co-comactly.  This is Lemma~\ref{quasiGeodesics} here,
the Morse lemma, Proposition~8.28 of \cite{DS}. 

\begin{theorem} \label{relativelyHyperbolicGroups}
Suppose that $G$ is hyperbolic relative to $\calp = \{P_1,\dots,
P_k\}$.  Suppose also that for each $i$, $1 \le i \le k$ and any $N$,
$P_i$ has a \dehn algorithm with is $N$-tight.  Then $G$ has a \dehn
algorithm.  This \dehn algorithm consists of local geodesic rules
together with \dehn algorithms for the $P_i$.
\end{theorem}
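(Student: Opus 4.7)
The plan is to combine local geodesic reductions capturing the hyperbolic part of $G$'s geometry with the $N$-tight \dehn algorithms for each $P_i$. The $N$-tightness hypothesis (cf.\ Proposition~\ref{expandIsTight}) is precisely what makes the gluing work: it allows one to adjoin a finite collection of local geodesic rules to each peripheral algorithm without disrupting its reduction behaviour, so long as the left-hand sides are kept distinct.

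Setup. Fix a finite generating set $\calg$ of $G$ containing a generating set $\calg_i$ for each $P_i$; let $(\calg_i,\A_i,S_i)$ be an $N$-tight \dehn algorithm for $P_i$, where $N$ will be chosen below, and set $\A = \calg \cup \bigcup_i \A_i$. Using the negatively curved geometry of $G$ away from the horoballs, together with the Morse lemma (Lemma~\ref{quasiGeodesics}) and the bounded coset penetration property, choose $N$ and then a constant $D \ge N$ large enough that: (a)~every closed loop in the Cayley graph of $G$ of length at most $D$ which does not plunge deeply into any horoball contains more than half of some $G$-relator, so yields a local geodesic rule $u \to v$ in $\calg^*$; and (b)~any geodesic subword $u \in \calg^*$ of length at least $N$ representing an element of some peripheral coset $gP_i$ is, after trimming bounded-length tails, a word in $\calg_i^*$ corresponding to a deep excursion into the horoball based at $gP_i$.

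Construction and verification. Let $R_{\rm hyp}$ be the finite family of local geodesic rules supplied by (a), and put $S = R_{\rm hyp} \cup \bigcup_i S_i$, discarding or renaming so that left-hand sides are pairwise distinct. By $N$-tightness, each $(\calg_i,\A_i,S_i \cup R_{\rm hyp})$ remains a valid $N$-geodesic \dehn algorithm for $P_i$, so the $S_i$ rules still do their job inside the combined system. We claim $(\calg,\A,S)$ is a \dehn algorithm for $G$. Each rule preserves the group element, so it suffices to show that if $w \in \calg^*$ represents the identity and is fully reduced with respect to $S$, then $w$ is empty. Consider a van Kampen disk for $w$ in the cusped Cayley graph and an outermost face. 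If the face is a local $G$-relator, item (a) produces a subword of $w$ to which some rule of $R_{\rm hyp}$ applies, contradicting reducedness. Otherwise the outermost feature is a deep horoball excursion; item (b) identifies a long subword of $w$ whose tails, once cleaned up by rules of $R_{\rm hyp}$, leave a word in $\calg_i^*$ representing an element of $P_i$, to which some rule of $S_i$ then applies, again contradicting reducedness. Hence $w$ is empty, as required.

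The main obstacle is the geometric case analysis at the outermost face of the van Kampen diagram: one must show that every outermost feature is either a classical hyperbolic relator, yielding a rule in $R_{\rm hyp}$, or a deep horoball excursion whose projected subword lies, up to bounded tails, inside $\calg_i^*$. Translating between horoball excursions in the cusped space and subwords in $\calg_i^*$ is the role of the Morse lemma and BCP, but care is needed in choosing $N$ and $D$ relative to the constants from relative hyperbolicity, so that the bounded-length ``entry and exit'' portions of each horoball excursion can all be absorbed into the finite family $R_{\rm hyp}$ and so that $R_{\rm hyp}$ does not interfere with the peripheral algorithms---which is exactly guaranteed by $N$-tightness.
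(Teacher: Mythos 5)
Your approach diverges from the paper's at the crucial step and the divergence opens a real gap. The paper does not argue via van Kampen diagrams or an ``outermost face'' dichotomy. Instead, it first constructs a special generating set $\calg(K)$ (Lemma~\ref{generators}) so that geodesics entering a parabolic coset deeply do so using letters entirely in that parabolic; it then takes an $S$-reduced word $w$, decomposes it as $w = g_0 p_1 g_1 \cdots p_m g_m$ into maximal long parabolic subwords $p_j$ and $E$-local geodesics $g_j$, and proves (Lemmas~\ref{goodParabolicSubwords}--\ref{quasiGeodesicPaths}) that the associated piecewise path $\sigma(w)$, built from $\HH$-geodesics, is a global $\HH$-quasigeodesic with constants that do not degrade as $D,E$ increase. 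Since a quasigeodesic loop has bounded length, $\ell(w)$ is bounded; one then raises $D$ and $E$ above this bound so that a reduced identity word is a geodesic, hence empty. Your proposal has none of this: no quasigeodesic statement, no length bound on reduced words, and no bootstrapping of the parameters $D,E$ after the bound is obtained. Those are the load-bearing steps.

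Concretely, two of your assertions would need substantial independent justification and are not routine. Your item (a) --- that every short closed loop avoiding horoball depths contains more than half a relator --- is a Greendlinger-type statement that is not what the paper's $S_\calg$ rules encode (they simply replace any bounded-length non-geodesic by a geodesic); establishing a Greendlinger lemma in a relatively hyperbolic setting in the precise form your argument needs is itself a nontrivial project. And your conclusion in the horoball case is backwards: once $w$ is $S$-reduced, Lemma~\ref{goodParabolicSubwords} guarantees the maximal parabolic subwords of $w$ are already $S_i$-reduced, so no $S_i$-rule applies to them; the contradiction cannot be local. You have, in effect, identified the correct hypotheses ($N$-tightness keeps $S_\calg$ from interfering with the peripheral algorithms) but then route them into a Dehn-lemma argument whose central geometric dichotomy is left unproved --- indeed you flag it yourself as ``the main obstacle.'' The paper's route through quasigeodesics and the Morse lemma (Lemma~\ref{quasiGeodesics}) is precisely how that obstacle is circumvented.
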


\begin{corollary} \label{geofingroup}
If $G$ is a geometrically finite hyperbolic group, then $G$ has a
\dehn algorithm.
\end{corollary}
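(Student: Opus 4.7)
The plan is to deduce the corollary directly from Theorem~\ref{relativelyHyperbolicGroups} by verifying that the peripheral subgroups of a geometrically finite hyperbolic group satisfy its hypotheses. The first step is to recall the structural fact about geometrically finite groups: such a $G$ acts geometrically finitely on $\mathbb{H}^n$ and is therefore hyperbolic relative to the collection $\calp = \{P_1,\dots,P_k\}$ of conjugacy class representatives of maximal parabolic subgroups (the cusp stabilizers). This is the standard parade example alluded to in the introduction to this section, and reduces the corollary to checking the hypothesis on the $P_i$.

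Next I would observe that each $P_i$, being a maximal parabolic subgroup of a discrete group of isometries of $\mathbb{H}^n$, is virtually abelian: by Bieberbach's theorem applied to the induced action on a horosphere, $P_i$ has a finite-index free abelian subgroup of rank at most $n-1$. In particular each $P_i$ is finitely generated and virtually abelian.

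Now I apply the machinery from Section~\ref{endo}. Corollary~\ref{expandIsNGeodesic} says that any finitely generated virtually abelian group has, for every $N>0$, an $N$-geodesic \dehn algorithm with working alphabet $\calg \cup \{t,t^{-1}\}$, and Proposition~\ref{expandIsTight} says moreover that such an algorithm is $N$-tight. Thus for every $N$, every $P_i$ possesses an $N$-tight \dehn algorithm, which is exactly the hypothesis demanded by Theorem~\ref{relativelyHyperbolicGroups}.

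Finally, I invoke Theorem~\ref{relativelyHyperbolicGroups} itself to conclude that $G$ has a \dehn algorithm, built from local geodesic rules over a generating set containing generators for each $P_i$, together with the $N$-tight \dehn algorithms for the $P_i$ (for $N$ chosen as large as Theorem~\ref{relativelyHyperbolicGroups} requires, which depends only on the geometry of $G$ relative to $\calp$). There is essentially no obstacle here: all of the genuine work is in Theorem~\ref{relativelyHyperbolicGroups} and in the $N$-tightness of the expanding-endomorphism construction. The corollary is a matter of matching the virtually abelian peripheral structure of geometrically finite groups with those hypotheses.
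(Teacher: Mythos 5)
Your proposal is correct and follows essentially the same route as the paper: the paper also notes that geometrically finite hyperbolic groups are hyperbolic relative to (virtually) abelian peripheral subgroups, and then invokes Theorem~\ref{relativelyHyperbolicGroups} together with Corollary~\ref{expandIsNGeodesic} and Proposition~\ref{expandIsTight} to supply the required $N$-tight \dehn algorithms. Your write-up simply makes explicit the Bieberbach-theorem justification for why cusp stabilizers are virtually abelian, which the paper leaves tacit.
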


\begin{corollary}\label{hypgphmflds}
If $M$ is a graph manifold each of whose pieces is hyperbolic
then $\pi_1(M)$ has a \dehn algorithm.
\end{corollary}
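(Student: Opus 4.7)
The plan is to exhibit $\pi_1(M)$ as a relatively hyperbolic group whose peripheral subgroups admit $N$-tight \dehn algorithms, and then to apply Theorem~\ref{relativelyHyperbolicGroups} directly. Interpreting the hypothesis, $M$ decomposes along a finite collection of incompressible tori $T_1,\dots,T_k$ into pieces $M_1,\dots,M_r$, each of which is a finite-volume (cusped) hyperbolic $3$-manifold, possibly with totally geodesic boundary. In particular each $\pi_1(M_j)$ is a geometrically finite hyperbolic group as in Corollary~\ref{geofingroup}, and the edge groups in the graph-of-groups decomposition of $\pi_1(M)$ along the $T_i$ are all isomorphic to $\Z^2$.

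First I would establish that $G = \pi_1(M)$ is hyperbolic relative to the collection $\calp = \{P_1,\dots,P_k\}$, where $P_i\cong\Z^2$ is the stabilizer of a chosen lift of $T_i$. Each vertex group $\pi_1(M_j)$ is hyperbolic relative to its own cusp subgroups (this is the content of the geometrically finite case). The graph-of-groups structure identifies those cusp subgroups pairwise along the edge tori, so a standard Bowditch/Dahmani style combination theorem for relatively hyperbolic groups glued along peripheral subgroups yields relative hyperbolicity of $G$ with respect to the single collection of conjugacy classes represented by $P_1,\dots,P_k$.

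Next I would verify that each peripheral subgroup carries an $N$-tight \dehn algorithm for every $N$. Since $P_i\cong\Z^2$ is finitely generated and virtually abelian, Corollary~\ref{expandIsNGeodesic} supplies, for any $N>0$ and any semi-group generating set, an $N$-geodesic \dehn algorithm on $P_i$; Proposition~\ref{expandIsTight} asserts that the particular \dehn algorithms produced there are automatically $N$-tight. The hypothesis of Theorem~\ref{relativelyHyperbolicGroups} is therefore satisfied for $G$ relative to $\calp$, and the theorem concludes that $G$ has a \dehn algorithm.

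The main obstacle is the first step: one must be careful that the peripheral structure appearing in the statement of Theorem~\ref{relativelyHyperbolicGroups} really is just the single collection $\{P_1,\dots,P_k\}$, rather than the larger family of all cusp subgroups of all vertex pieces. This collapsing is precisely what the combination theorem ensures, because each cusp subgroup of a vertex group is either identified with an edge group (and so conjugate into some $P_i$) or — in the scenario where a piece has boundary tori that are not glued to anything — is a peripheral $\Z^2$ to be included in $\calp$ on its own. Once this point is cleanly verified, the rest of the argument is immediate from the quoted results.
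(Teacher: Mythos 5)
Your argument is correct and is essentially the same one-line argument the paper gives: $\pi_1(M)$ is hyperbolic relative to the $\Z^2$ subgroups coming from the decomposition tori, these peripherals admit $N$-tight \dehn algorithms for every $N$ by Corollary~\ref{expandIsNGeodesic} and Proposition~\ref{expandIsTight}, and Theorem~\ref{relativelyHyperbolicGroups} then applies. One small slip: the boundary tori of the hyperbolic pieces are cusps, not totally geodesic boundary (a torus cannot carry a hyperbolic metric), but this does not affect the argument since your appeal is to the cusp subgroups in any case.
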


\begin{corollary}  \label{negativelyCurvedManifold}
Suppose that $M$ is a finite volume negatively curved manifold with
  curvature bounded below and bounded away from zero. Then $\pi_1(M)$
  has a \dehn algorithm.
\end{corollary}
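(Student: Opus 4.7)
The plan is to invoke Theorem~\ref{relativelyHyperbolicGroups} with $G = \pi_1(M)$ and the collection $\calp = \{P_1,\dots,P_k\}$ of cusp subgroups. So I need two inputs: that $(G,\calp)$ is relatively hyperbolic, and that each $P_i$ admits an $N$-tight \dehn algorithm for every $N$.

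First, the geometric input. Since $M$ has pinched negative curvature and finite volume, its universal cover $\widetilde{M}$ is a proper Gromov hyperbolic space on which $G=\pi_1(M)$ acts properly discontinuously and cocompactly on the complement of a $G$-invariant family of disjoint horoballs. The stabilizers of these horoballs are the cusp subgroups $P_1,\dots,P_k$, and standard arguments (e.g.\ Farb's relative hyperbolicity together with the bounded coset penetration property, or equivalently the Bowditch or Gromov formulation applied to this horoball complement) show that $G$ is hyperbolic relative to $\calp$. This gives hypothesis one of Theorem~\ref{relativelyHyperbolicGroups}.

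Second, the algebraic input on cusp groups. Each $P_i$ acts cocompactly and properly on a horosphere in $\widetilde{M}$, and by the Margulis lemma in pinched negative curvature (due to Bowditch, Heintze--Margulis; see e.g.\ \cite{DS}), $P_i$ is finitely generated and virtually nilpotent. By Theorem~\ref{virtuallyNilpotent} each $P_i$ already has a \dehn algorithm, so the only remaining point is $N$-tightness. Here I would trace through the nilpotent construction: a virtually nilpotent $P_i$ has a torsion-free nilpotent subgroup $H_i$ of finite index, which by Section~\ref{nilpt} embeds in some $U_n(\Z)$; and $U_n(\Z)$ carries an expanding endomorphism $f_\mu$ (any sufficiently large integer $\mu$ works). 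By Corollary~\ref{expandIsNGeodesic} together with Proposition~\ref{expandIsTight}, choosing $\mu$ large enough yields an $N$-tight $N$-geodesic \dehn algorithm for $U_n(\Z)$. One then checks that the subgroup construction of Theorem~\ref{subgroups} (take generators of $P_i$ inside $U_n(\Z)$) and the finite-index construction of Theorem~\ref{finiteindex} (transversal-pushing followed by the subgroup algorithm) both preserve $N$-tightness, since adjoining further local-geodesic rules over the enlarged generating set does not interfere with the coset-pushing rules $R$ nor with the expanding-endomorphism rules which only fire on words longer than any pre-set constant.

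The main obstacle is exactly this last verification: that $N$-tightness is preserved under the subgroup and finite-index constructions. The issue is that in Theorem~\ref{finiteindex} the rules $R$ used to push transversal elements along the word could a priori overlap with a newly-added local geodesic rule of length $\le N$. I would handle this by choosing the transversal $\cT$ and generating set carefully and, if necessary, replacing the expanding endomorphism by a higher power so that the expansion factor $M$ in Corollary~\ref{expandIsNGeodesic} dwarfs both $N$ and the lengths of the transversal-pushing rules; then any word with $\lg(w)<N$ has height zero, so the $R$-rules simply evaluate it inside $H_i$ and the newly-adjoined rules of the tight extension apply as intended. Once $N$-tightness of the $P_i$'s is in hand, Theorem~\ref{relativelyHyperbolicGroups} immediately produces a \dehn algorithm for $\pi_1(M)$, completing the proof.
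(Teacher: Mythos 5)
Your strategy is to verify the hypotheses of Theorem~\ref{relativelyHyperbolicGroups} literally, i.e.\ to show that each cusp subgroup $P_i$ admits an $N$-tight \dehn algorithm for every $N$, and then cite the theorem. The paper does something different, and the difference matters: the paper explicitly concedes that ``there is no guarantee that this is $N$-tight for $P_i$,'' obtains only a $1$-tight algorithm for $P_i$ (via the embedding in $\unz$), and then reruns the argument of Theorem~\ref{relativelyHyperbolicGroups}, using the freedom in the choice of $K$ (and hence $\cala_i$) to make Lemma~\ref{quasiGeodesicPaths} go through with $F$ effectively equal to $1$.

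The gap in your proposal is precisely the step you flag as ``the main obstacle.'' The subgroup construction of Theorem~\ref{subgroups} does \emph{not} preserve $N$-tightness for $N>1$, and your proposed fix does not repair it. The obstruction is not an interaction between the transversal-pushing rules $R$ and the newly-added local geodesic rules; it is that $N$-geodesity is defined relative to the word metric of the group at hand. When you restrict the $\unz$ \dehn algorithm to the generating subset $\calp\subset\calg(\unz)$, a word $w\in\calp^*$ of $P_i$-length $<N$ reduces to a normal form that is a geodesic in $\unz$ --- but that normal form will in general use non-$\calp$ letters and need not be a $P_i$-geodesic in $\calp^*$ at all, since distances in the subgroup can be strictly larger than in $\unz$. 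Raising the expanding endomorphism to a high power makes the $\unz$ algorithm $N$-geodesic for large $N$ in the $\unz$-metric, but it does nothing to close the gap between $\unz$-geodesics and $P_i$-geodesics. This is exactly why the paper settles for $1$-tightness of $P_i$ (single letters $p\in\calp$ are their own normal forms and are trivially geodesic in both metrics) and then compensates geometrically: the parameter $F$ in the decomposition only needs to guarantee that the parabolic segments $\pi_j$ are long in $\HH$, and by enlarging $K$ one can ensure this holds already for any $\calp$-word of length two. You would need to reproduce this rereading of the proof of Theorem~\ref{relativelyHyperbolicGroups}; as written, your argument asserts a property of $P_i$ that the paper itself disclaims.
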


\begin{corollary} \label{freeProductWithAmalgamation}
Suppose that $A$ and $B$ are groups with $N$-tight \dehn algorithms and that
$C$ is a finite group which includes as a subgroup of each of these.  Then
$A*_CB$ has a \dehn algorithm.
\end{corollary}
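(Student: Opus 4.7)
The plan is to recognize $A *_C B$ as a group hyperbolic relative to $\{A, B\}$ and then invoke Theorem~\ref{relativelyHyperbolicGroups}, exploiting crucially that the amalgamated subgroup $C$ is finite.

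First I would set up the relative hyperbolicity. The Bass--Serre tree of $A *_C B$ has vertex stabilizers conjugate to $A$ or $B$ and edge stabilizers conjugate to $C$, hence finite. It is standard that the fundamental group of a finite graph of groups with finite edge groups is hyperbolic relative to its vertex groups (this follows from Bowditch's or Osin's characterizations, or directly from Farb's coned-off definition plus the bounded coset penetration condition). Geometrically, coning off the cosets of $A$ and $B$ in the Cayley graph of $A *_C B$ yields a graph quasi-isometric to the Bass--Serre tree, which is hyperbolic; and $\{A, B\}$ is almost malnormal in $A *_C B$ because any intersection $gAg^{-1} \cap A$ with $g$ not in $A$ (and similarly for $B$) sits inside a conjugate of the finite group $C$. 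The bounded coset penetration property then follows from the same almost malnormality combined with the finiteness of $C$.

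Next, with $A *_C B$ hyperbolic relative to $\{A, B\}$ in hand, I would apply Theorem~\ref{relativelyHyperbolicGroups}. Its hypothesis is that each peripheral subgroup admits an $N$-tight \dehn algorithm for the required $N$, and the corollary's hypothesis supplies this for both $A$ and $B$. The conclusion of the theorem produces a \dehn algorithm for $A *_C B$ consisting of local geodesic rules together with the given \dehn algorithms for the factors, which is exactly the shape predicted by the corollary.

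The main obstacle is the first step: one must supply just enough detail about the graph-of-groups picture to justify the relative-hyperbolicity claim. I would either cite a standard reference or give the brief coning-off and almost-malnormality sketch above. Verifying $N$-tightness is already folded into the corollary's hypothesis, so no independent argument is needed on that front, and no delicate geometric estimate is required beyond observing that the finiteness of $C$ forces the bounded coset penetration property for the family $\{A, B\}$.
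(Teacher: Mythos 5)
Your proposal matches the paper's own argument: the paper disposes of this corollary in one sentence by noting that $A*_CB$ is hyperbolic relative to $\{A,B\}$ (using finiteness of $C$) and then invoking Theorem~\ref{relativelyHyperbolicGroups}. You supply the standard justification of the relative-hyperbolicity claim (Bass--Serre tree, coning off, almost malnormality), which the paper leaves implicit, but the route is the same.
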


Corollaries \ref{geofingroup} and \ref{hypgphmflds} follow directly
from Theorem~\ref{relativelyHyperbolicGroups} since the groups in
question are hyperbolic relative to abelian (or virtually abelian)
groups.  Corollary~\ref{freeProductWithAmalgamation} follows since the
amalgam is hyperbolic relative to its factors. In the case of
Corollary~\ref{negativelyCurvedManifold} the groups are hyperbolic
relative to nilpotent groups \cite{F}.  Nilpotent groups have \dehn
algorithms by Theorem~\ref{virtuallyNilpotent}, but there is no
guarantee that these are $N$-tight for arbitrary $N$.  It is only in
the perhaps larger group of upper triangular matrices where this is
guaranteed. However, once we have proved
Theorem~\ref{relativelyHyperbolicGroups}, we will see how to proceed
here.

We suppose that $G$ is hyperbolic relative to a finite collection of
subgroups $\{P_1,\dots,P_k\}$.  The {\em parabolic} subgroups of $G$
are the $G$-conjugates of $\{P_1,\dots,P_k\}$.  We take $\calp$ to be
the set parabolic subgroups.    The following are well known properties
of relatively hyperbolic groups. See, for example, \cite{B}, \cite{O}
and \cite{DS}. 

\begin{review}\Label{review} 
\ 
\begin{enumerate}
\item $G$ acts discretely by isometries on a $\delta$-hyperbolic space
  $\H$.
\item This action induces an action on the boundary $\delH$.
\item There is a $G$ equivariant family of horoballs $\{B_P \mid P \in
  \cal P\}$.
\item For each $P \in \cal P$ we take $S_P$ to be $\partial B_P$.  $P$
  acts co-compactly on $S_P$.
\item $G$ acts co-compactly on $X = \H \setminus (\cup_{P\in\calp} B_P)$.
\item Each horoball $B_P$ is quasiconvex. Consequently, there is a
  rectraction $r_P: X \to S_P$ which is inherited from the hyperbolic
  retraction of $\H$ onto $S_P$.  (We will also refer to this
  retraction as $r_S$ where $S$ is the boundary of $P$.
\item \label{shrink} For points sufficiently distant from $S_P$, the retraction $r_P$
  shrinks $X$ distance by a super-linear factor.  That is to say,
  there is  a function $s(\cdot)$ with the property that for any linear
  function   $y(x)=mx +b$, there is $x_0$ such that for $x>x_0$, $s(x)
  > y(x)$  and there is $d_0$ so that if $d =
  \min(d_X(p,S_P),d_X(q,S_P)) > d_0$ then
  $$d_X(r_P(p),r_P(q)) < \frac{d_X(p,q)}{ s(d)}.$$
\item There is $\delta$ with the following property.  Suppose that
  $S_0$ and $S_1$ are disjoint horospheres, i.e., the boundaries of
  disjoint horoballs in $\calp$.  Suppose that $\gamma$ and $\gamma'$
  are $\H$ geodesics that start in $S_0$ and end in $S_1$ and that $x$
  and $x'$ are the last points of $\gamma$ and $\gamma'$ in $S_0$.
  Then $d_X(x,x') \le \delta$.
\item \label{boundedHorosphereRetraction} There is $\delta$ so that if
  $S_0$ and $S_1$ are disjoint 
  horospheres, then $r_{S_0}(S_1)$   has $d_X$ diameter bounded by
  $\delta$. 
\item \label{horoballGeodesicsDepartX} Given $\delta$ there is
$\epsilon$ with the following property.  Suppose $S$ is the boundary
of horoball $B$. If $\gamma$ is an $\H$ geodesic that starts and ends
on $S$ then the only portion of $\gamma$ lying in the $\delta$
neighborhood of $\H \setminus B$ are an initial and terminal segment
of $\gamma$, each of length at most $\epsilon$.
\end{enumerate}
\qed 
\end{review}

We need the following lemma which is Proposition~8.28 of \cite{DS}.

\begin{lemma}\Label{quasiGeodesics}
There is $\delta$ depending only on $\lambda$ and $\epsilon$ with the
following property.  Suppose that $w$ is a $(\lambda,\epsilon)$
quasigeodesic in $X$ and $\gamma$ is a $\H$ geodesic with the same
endpoints. Suppose that $\Sigma = \Sigma(\gamma)$ is the union of
$\gamma$ and the horospheres that it meets.  Then $w$ lies in a
$\delta$ neighborhood of $\Sigma$. \qed
\end{lemma}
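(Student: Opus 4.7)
The plan is to reduce the statement to the classical Morse lemma in a $\delta$-hyperbolic space by electrifying the horoballs. Since the author explicitly attributes the lemma to Proposition~8.28 of \cite{DS}, what I would write is really a citation; but the natural strategy for proving it from scratch runs as follows. Form the electrified space $\hat{\H}$ by coning off each horoball $B_P$ (say, attaching a cone of bounded radius to $S_P$). Because $G$ is hyperbolic relative to $\calp$, $\hat{\H}$ is itself $\delta'$-hyperbolic, with $\delta'$ depending only on the original data.

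Next, observe that the $(\lambda,\epsilon)$-quasigeodesic $w$ in $X$ descends to a $(\lambda',\epsilon')$-quasigeodesic in $\hat{\H}$ with controlled constants, since the natural map $X\to \hat{\H}$ is $1$-Lipschitz on $X$. Similarly $\gamma$ descends to a quasigeodesic in $\hat{\H}$, because each maximal subsegment of $\gamma$ lying in a horoball $B_P$ contributes uniformly bounded $\hat{\H}$-length. Apply the classical Morse lemma in the hyperbolic space $\hat{\H}$: the images of $w$ and $\gamma$ stay within some bounded Hausdorff distance $D=D(\lambda,\epsilon,\delta')$.

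Now I translate back. Given a point $p$ of $w$, choose a point $q$ of $\gamma$ within $\hat{\H}$-distance $D$. There are two cases. If $q$ lies outside every horoball, then $\hat{\H}$-closeness yields $\H$-closeness up to an additive constant coming from the bounded cone radii, so $p$ lies near $\gamma$ in $\H$. If instead $q$ lies inside some horoball $B_P$, then since $p\in X$ is disjoint from $B_P$, and since the bounded coset penetration property forces the entry/exit locations of $w$ near $S_P$ to track those of $\gamma$, the point $p$ must lie within bounded $\H$-distance of $S_P$. In either case $p$ lies in a bounded neighborhood of $\Sigma=\gamma\cup\bigcup\{S_P:\gamma\cap B_P\neq\emptyset\}$, which is exactly the claimed conclusion.

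The main obstacle is the last passage, namely un-electrification. One must rule out the possibility that $w$ lingers along a horosphere $S_P$ whose horoball $\gamma$ never enters, and one must bound the drift between the shadows of $w$ and $\gamma$ on each horosphere they both visit. These are precisely the bounded coset penetration estimates established in \cite{DS}, and they are what make the quantitative bookkeeping in Proposition~8.28 nontrivial. Once that property is in hand, the two-case analysis above produces the uniform $\delta$ depending only on $\lambda$ and $\epsilon$.
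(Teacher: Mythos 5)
The paper does not prove this lemma; it cites Proposition~8.28 of \cite{DS} and the \qed\ on the statement is the whole ``proof.'' You correctly notice this. Your sketch of how one might actually prove it via electrification is a sensible route, but it has a genuine gap in the second step, and the gap is not where you say it is. You assert that $w$ ``descends to a $(\lambda',\epsilon')$-quasigeodesic in $\hat{\H}$ \ldots since the natural map $X\to\hat{\H}$ is $1$-Lipschitz.'' A $1$-Lipschitz map gives only the \emph{upper} bound $d_{\hat{\H}}(w(s),w(t))\le d_X(w(s),w(t))$; it says nothing about the lower bound a quasigeodesic must satisfy. In fact the image of an $X$-quasigeodesic in $\hat{\H}$ is typically \emph{not} a parametrized quasigeodesic: whenever $w$ is forced to travel a long way in $X$ around a horoball $B_P$ that its $\H$-geodesic cuts deeply through, the corresponding stretch of $w$ lingers near $S_P$, and in $\hat{\H}$ both endpoints of that stretch collapse to within a bounded distance of the cone vertex over $B_P$. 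The parametrized path then makes an arbitrarily long excursion between two $\hat{\H}$-close points.

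What one can hope to prove is that the \emph{image} of $w$ is Hausdorff-close in $\hat{\H}$ to an $\hat{\H}$-geodesic (i.e.\ it is an unparametrized quasigeodesic without badly uncontrolled backtracking), and that statement is exactly where the bounded coset penetration estimates do their work. You relegate BCP to the final ``un-electrification'' paragraph as if it were just bookkeeping, but it is needed already to justify applying the Morse lemma in step~3. Related to this, the two-case un-electrification argument is also a bit quick: even if $q\in\gamma$ lies outside every horoball, the short $\hat{\H}$-path from $p$ to $q$ may still pass through cones, so $\hat{\H}$-closeness does not by itself yield $\H$-closeness ``up to an additive constant.'' As a citation your answer is fine, but as a proof sketch the key technical content is mis-located.
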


Given an $\H$ geodesic, $\gamma$, it meets a finite (possibly empty)
collection of horoballs, $B_i,\dots,B_k$.  Replace each portion
$\gamma \cap B_i$ with an $X$ geodesic, $\sigma_i$ to produce the $X$
paths
$$\sigma = \gamma_0\sigma_1\gamma_1\dots\sigma_n\gamma_n.$$
We refer to a path formed in this way as a {\em rough geodesic}.

\begin{lemma}\Label{roughGeodesics} 
There is a $\lambda$ such that every rough geodesic is an $X$
$\lambda$ quasigeodesic.
\end{lemma}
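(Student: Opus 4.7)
The plan is to establish $\sigma$ as a $(\lambda,\epsilon)$-quasigeodesic in $X$ by bounding $\ell_\sigma(p,q)$ linearly in $d_X(p,q)$ for every pair of points $p,q$ on $\sigma$. The upper bound $d_X(p,q) \le \ell_\sigma(p,q)$ is immediate from $\sigma$ being an $X$-path; the substance is the reverse inequality.

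First I would decompose the sub-arc of $\sigma$ from $p$ to $q$ into its $\H$-geodesic pieces (lying in $X$ and together forming sub-arcs of the single $\H$-geodesic $\gamma$) and its horospherical pieces $\sigma_k$ (each an $X$-geodesic from $a_k$ to $b_k$ along $S_k$, where $a_k,b_k$ are the entry and exit of $\gamma$ through $B_k$). This gives $\ell_\sigma(p,q) = L_{\mathrm{out}} + \sum_k d_X(a_k,b_k)$, and $L_{\mathrm{out}} \le d_\H(p,q) \le d_X(p,q)$, so the task reduces to bounding the horospherical sum. I would fix an $X$-geodesic $\tau$ from $p$ to $q$ and, viewing it as a $(1,0)$-quasigeodesic, apply Lemma~\ref{quasiGeodesics} to confine $\tau$ to a uniform $\delta$-neighborhood of $\Sigma(\gamma)=\gamma\cup\bigcup_k S_k$. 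For each $k$, item~\ref{horoballGeodesicsDepartX} guarantees that apart from short initial and terminal segments the arc $\gamma\cap B_k$ lies far outside the $\delta$-neighborhood of $X$; so $\tau$ cannot track $\gamma$ inside $B_k$ and must instead traverse the $\delta$-neighborhood of $S_k$, entering it near $a_k$ and exiting near $b_k$. This produces a sub-segment $\tau_k$ of length at least $d_X(a_k,b_k)-O(1)$.

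Cocompactness and discreteness of the $G$-action on $X$ force the $X$-distance between distinct horoballs to be bounded below by a uniform constant $d_0 > 0$; so the $\tau_k$ can be taken pairwise disjoint and, moreover, the number $N$ of horoballs that $\gamma$ enters between $p$ and $q$ satisfies $N \le L_{\mathrm{out}}/d_0 + 1 \le d_X(p,q)/d_0 + 1$. Summing these local estimates yields
\[ d_X(p,q) = \ell(\tau) \ge L_{\mathrm{out}} + \sum_k d_X(a_k,b_k) - O(N) = \ell_\sigma(p,q) - O(d_X(p,q)), \]
and rearranging gives $\ell_\sigma(p,q) \le \lambda\, d_X(p,q) + \epsilon$ for suitable constants $\lambda,\epsilon$.

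The main obstacle I would anticipate is verifying that the endpoints of each $\tau_k$ are really close to $a_k$ and $b_k$ on $S_k$, so that $\ell(\tau_k)$ captures the horospherical span $d_X(a_k,b_k)$ up to an additive constant. This requires the bounded horosphere retraction property (item~\ref{boundedHorosphereRetraction}) to rule out $\tau$'s visits to a nearby horosphere $S_j$ being mistaken for the $k$-th excursion, together with a standard hyperbolicity argument that an $\H$-geodesic from a point outside $B_k$ to a point of $B_k$ enters $B_k$ within bounded distance of the $\H$-nearest-point projection onto $S_k$.
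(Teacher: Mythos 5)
Your approach is essentially the paper's, and correct. Both apply the Morse lemma (Lemma~\ref{quasiGeodesics}) to confine a comparison $X$-geodesic near $\Sigma(\gamma)$, identify the excursions near each horosphere $S_k$, and control the number of horoball crossings using the uniform separation between horoballs. The paper decomposes the $X$-geodesic $\sigma'$ into pieces $\gamma_0'\sigma_1'\gamma_1'\cdots$ matching the rough geodesic's pieces and compares lengths piece by piece for both the outside segments ($\gamma_i$ vs.\ $\gamma_i'$) and the horospherical ones ($\sigma_i$ vs.\ $\sigma_i'$); you compare only the horospherical excursions, dispatching the outside contribution in one line via $L_{\mathrm{out}}\le d_\H(p,q)\le d_X(p,q)$, a mild but genuine simplification. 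You are also more explicit in fixing arbitrary $p,q$ on $\sigma$; the paper runs the argument only for the endpoints of $\sigma$ and leaves it implicit that the same reasoning applies to sub-arcs, which is what quasigeodesity actually requires. The point you flag as the main obstacle---pinning the entry and exit of each $\tau_k$ near $a_k$ and $b_k$, using property~\ref{boundedHorosphereRetraction} and hyperbolic nearest-point projection---is indeed where the real work sits, and it also requires the (standard, equivariant) normalization that distinct horoballs lie at $X$-distance greater than $2\delta$ so that the $\delta$-neighborhoods of their boundaries, and hence the $\tau_k$, are genuinely disjoint.
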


\begin{proof}
Suppose that $\gamma$ is an $H$ geodesic and $\sigma =
\gamma_0\sigma_1\gamma_1\dots\sigma_n\gamma_n$ is a corresponding
rough geodesic.  Suppose that $\sigma'$ is a corresponding $X$
geodesic.  By Lemma~\ref{quasiGeodesics}, this lies in a $\delta$
neighborhood of $\Sigma = \Sigma(\gamma)$.  Let us decompose $\sigma'$
  as   $\sigma' =
\gamma_0'\sigma_1'\gamma_1'\dots\sigma_n'\gamma_n'$ where $\sigma_i'$
is the portion of $\sigma'$ which lies within $\delta$ of the
horosphere for $\sigma_i$, but not within $\delta$ of $\gamma$.  Some
of these may be empty.  However, it follows that for each $i$,
$\gamma_i$ and $\gamma_i'$ lie within $2\delta$ of each other.  Since
each of these is geodesic, the difference in their lengths is
bounded.  Similarly, for each $i$, the endpoints of $\sigma_i$ and
$\sigma_i'$ are close to each other, thus bounding the difference in
their lengths.  Accordingly, the difference in lengths along $\sigma$
and $\sigma'$ arise only from these breakpoints each of which
contributes only a bounded difference.  Since there is a mimimum
distance between horospheres, these breakpoints are bounded away from
each other.  The result follows.
\end{proof}

We record here two general properties of $\delta$-hyperbolic
spaces. (Here we use the parameterized version of
$\delta$-hyperbolicity.) 

\begin{proposition}\Label{goodBends}
Given $\delta'>\delta$, there are $(\lambda,\epsilon)$ with the
following property.  Suppose $\gamma$ is a piecewise geodesic.
Suppose that each segment of $\gamma$ has length at least $\delta'+1$,
and that at each bend, both segments depart a $\delta$ neighborhood of
each other after travelling at most distance $\delta'$ from that
bend.  Then $\gamma$ is a $(\lambda,\epsilon)$ quasigeodesic. \qed
\end{proposition}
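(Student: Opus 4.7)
This is a local-to-global quasigeodesic lemma in the $\delta$-hyperbolic space $\H$: the hypothesis asserts that $\gamma$ is piecewise geodesic with long segments that are visibly bent at their endpoints.  My plan is to reduce to the standard local-to-global theorem for hyperbolic spaces, which states that if there is a scale $L \gg \delta$ such that every subpath of $\gamma$ of length at most $L$ is already a $(\lambda_0,\epsilon_0)$-quasigeodesic, then $\gamma$ is a global $(\lambda,\epsilon)$-quasigeodesic with constants depending only on $\delta,L,\lambda_0,\epsilon_0$.  Since each segment of $\gamma$ has length at least $\delta'+1$, I would take $L := 2(\delta'+1)$, so that any subpath of length at most $L$ crosses at most one bend of $\gamma$.

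For the local step, let $\alpha$ be such a subpath.  If $\alpha$ crosses no bend then it is geodesic and nothing is needed, so assume $\alpha = \alpha_1\alpha_2$, with $\alpha_1 \subset \gamma_i$ of length $a$ and $\alpha_2 \subset \gamma_{i+1}$ of length $b$, where $a,b \le \delta'+1$, sharing a common endpoint $p$ which is a bend of $\gamma$.  Consider the geodesic triangle $\Delta$ with vertices $\alpha^-,\alpha^+,p$; its two sides emanating from $p$ are $\alpha_1$ and $\alpha_2$.  The bend hypothesis, applied at the points on $\alpha_1$ and $\alpha_2$ at distance $\min(\delta',a)$ and $\min(\delta',b)$ from $p$, says that these two points are more than $\delta$ apart.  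A short thin-triangle calculation then forces $p$ itself to lie within some bounded distance $C = C(\delta,\delta')$ of the third side of $\Delta$, whence $d(\alpha^-,\alpha^+) \ge (a+b) - 2C$.  This is a uniform $(\lambda_0,\epsilon_0)$-quasigeodesic bound on every subpath of length at most $L$, and the local-to-global theorem promotes it to the desired global $(\lambda,\epsilon)$-quasigeodesic bound.

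The main obstacle is this thin-triangle calculation at a single bend, specifically in the borderline regime where $\delta' - \delta$ is small and the divergence at the bend is only barely visible to the $\delta$-hyperbolic geometry.  I would finesse this by observing that the bend hypothesis is essentially monotone in $\delta'$: in a hyperbolic space, two geodesic rays from a common point which have left each other's $\delta$-neighborhood continue to diverge, so the hypothesis for $\delta'$ implies the corresponding hypothesis for any $\delta'' \ge \delta'$.  Hence I may freely replace $\delta'$ by $\max(\delta',8\delta)$ before running the local-to-global machinery, guaranteeing a comfortable gap between the scale $L$ and the hyperbolicity constant; the resulting constants $(\lambda,\epsilon)$ still depend only on the original $(\delta,\delta')$, as required.
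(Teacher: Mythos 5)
The local-to-global reduction is the natural strategy here, and your thin-triangle estimate at a single bend is the right idea. But there is a quantitative race that you acknowledge and then do not resolve. Your estimate gives $C \approx \delta' + O(\delta)$ (this is essentially a bound on the Gromov product $(\alpha^-\mid\alpha^+)_p$: the hypothesis only controls the scale at which the two rays separate to level $\delta$, and that controls the Gromov product to within an additive $O(\delta)$, not to $O(\delta)$). So the local quasigeodesic constant you feed into the local-to-global theorem is $\epsilon_0 \approx 2\delta'$, while your local scale is $L = 2(\delta'+1) \approx 2\delta'$. The local-to-global theorem's threshold is itself a multiple of $\epsilon_0 + \delta$ with a multiplier strictly greater than one (the usual constants are of the form $c(\epsilon_0+\delta)$ with $c\ge 8$), so $L$ falls below the threshold — both $L$ and the threshold scale linearly in $\delta'$ with the wrong constant. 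Your proposed repair — replacing $\delta'$ by $\max(\delta',8\delta)$ using monotonicity — fixes only the bend hypothesis, not the length hypothesis: the segments are only guaranteed length $\geq \delta'+1$, which need not be $\geq 8\delta+1$, so you cannot retroactively strengthen that assumption; and when $\delta'$ is already large, no fixed bump helps since the threshold is proportional to $\delta'$.

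This is not just a gap in your write-up: the proposition as literally stated seems to fail. In $\mathbb{H}^2$, for $\delta'$ sufficiently large one can choose an interior angle $\theta$ with $\sinh\delta'\sin\theta > \sinh\delta$ (so a bend of angle $\theta$ satisfies the departure hypothesis) while the regular $n$-gon with interior angle $\theta$ and $n$ large has side length $\ell_n(\theta) \to 2\cosh^{-1}(1/\sin(\theta/2)) > \delta'+1$. A short computation shows the window of admissible $\theta$ is nonempty once $\delta' \gtrsim 2\delta$. The resulting closed polygon satisfies both hypotheses but is manifestly not a quasigeodesic — precisely because its side lengths and vertex Gromov products are both $\Theta(\delta')$, which is the race your argument runs into. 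A version of the statement that your proof would establish needs the separation at each bend to happen at level $O(\delta)$ (say $4\delta$) within distance $O(\delta)$ of the bend, so that $\epsilon_0 = O(\delta)$, while the segment lengths $\geq \delta'+1$ are then free to dominate the threshold; alternatively, require the segment lengths to exceed a large fixed multiple of $\delta'$. The paper states this proposition without proof so I cannot compare approaches, but as it stands your reduction does not close, and the hypotheses need to be tightened for any proof to go through.
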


\begin{proposition}\Label{quasiQuasiQuasi}
Suppose that $(\lambda_1,\epsilon_1)$ and $(\lambda_2,\epsilon_2)$ are
given.  Then there is $(\lambda_3,\epsilon_3)$ with the following
property.  If $\sigma$ is a $(\lambda_1,\epsilon_1)$ quasigeodesic and
$\tau$ is formed from $\sigma$ by replacing disjoint subpaths with
$(\lambda_2,\epsilon_2)$ quasigeodesics, the $\tau$ is a
$(\lambda_3,\epsilon_3)$ quasigeodesic. \qed
\end{proposition}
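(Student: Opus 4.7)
The plan is to use the Morse stability lemma: in a $\delta$-hyperbolic space, any two $(\lambda,\epsilon)$-quasigeodesics with common endpoints lie within Hausdorff distance $D(\lambda,\epsilon,\delta)$ of each other (and of the geodesic joining those endpoints). Each replaced subpath $\sigma|_{[s_i,t_i]}$ is a $(\lambda_1,\epsilon_1)$-quasigeodesic from $\sigma(s_i)$ to $\sigma(t_i)$, while the replacement $\alpha_i$ is a $(\lambda_2,\epsilon_2)$-quasigeodesic between the same points. By Morse, they lie within Hausdorff distance $D := D(\lambda_1,\epsilon_1,\delta) + D(\lambda_2,\epsilon_2,\delta)$ of each other. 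Assembling these estimates over the disjoint replaced subpaths, $\tau$ and $\sigma$ lie within Hausdorff distance $D$ of each other and share endpoints.

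Next I would transfer the quasigeodesic property from $\sigma$ to $\tau$. For points $\tau(u),\tau(v)$, choose corresponding $\sigma(u^*),\sigma(v^*)$ with $d(\tau(u),\sigma(u^*)),d(\tau(v),\sigma(v^*)) \le D$, so $|d(\tau(u),\tau(v)) - d(\sigma(u^*),\sigma(v^*))| \le 2D$ and the distance side is essentially inherited from $\sigma$. To compare parameter lengths, I would apply the two-sided quasigeodesic inequalities to $\alpha_i$ and $\sigma|_{[s_i,t_i]}$ (both joining the same endpoints): this gives $T_i'$ comparable to $t_i - s_i$ up to a multiplicative factor $\lambda_1\lambda_2$ and an additive constant $\lambda_2(\epsilon_1+\epsilon_2)$, while within unreplaced $\sigma$-pieces the parametrization is unchanged.

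The main obstacle is that the additive constant per replaced piece threatens to accumulate with the number of pieces crossed between $u$ and $v$, which the hypotheses do not uniformly bound. To circumvent this, rather than comparing $\tau$ to $\sigma$ segment by segment I would apply Morse once more to $\sigma$ itself, so that both $\tau$ and $\sigma$ lie in a bounded neighborhood of the single geodesic $\gamma$ joining their shared endpoints, and transfer the quasigeodesic inequality through $\gamma$. Distances in $\tau$ are then controlled by distances along $\gamma$ up to a single additive constant, giving final constants $\lambda_3,\epsilon_3$ depending only on $\lambda_1,\epsilon_1,\lambda_2,\epsilon_2,\delta$ and independent of the combinatorics of the replacement.
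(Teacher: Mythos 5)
The paper states this proposition without proof (it ends with \qed), treating it as standard, so there is no internal argument to compare against. Your instinct to invoke the Morse lemma is natural, and your worry about accumulating additive constants is exactly the right concern to raise. However, the resolution you propose does not close the gap. Knowing that $\tau$ lies in a bounded neighborhood of the geodesic $\gamma$ only controls distances between points of $\tau$ from above; the quasigeodesic property also demands a lower bound on $d(\tau(u),\tau(v))$ in terms of $|u-v|$, equivalently an upper bound on the parameter length of any subarc of $\tau$ in terms of the distance between its endpoints, and that is precisely what the per-piece additive constants threaten. Transferring "through $\gamma$" does not supply this: lying close to a geodesic is perfectly compatible with an arbitrarily slow (or oscillatory) parametrization.

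In fact, the proposition as literally stated fails without some control on the density of the replaced pieces. Take $\sigma$ a geodesic of length $N$ and replace $k$ disjoint subintervals, each of tiny parameter length $\epsilon$, by $(\lambda_2,\epsilon_2)$-quasigeodesics of parameter length close to $\lambda_2\epsilon + \lambda_2\epsilon_2$; with $k \approx N/(2\epsilon)$ such intervals the parameter length of $\tau$ is roughly $N + N\lambda_2\epsilon_2/(2\epsilon)$, while the endpoints remain at distance $N$. Letting $\epsilon\to 0$ defeats any fixed $(\lambda_3,\epsilon_3)$. What actually rescues the statement in the paper's application of it (passing from $\sigma$ to $\pi$ in Lemma~\ref{quasiGeodesicPaths}) is a uniform separation: the replaced pieces $\gamma_j$ are separated by the $\pi_j$, whose $\H$-lengths are bounded below once $F$ is large. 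With such a separation the number of replaced pieces meeting any subinterval of $\sigma$ is linearly bounded by its length, so the per-piece additive constants sum to a quantity linear in $|u-v|$ and can be absorbed into $\lambda_3$; the segment-by-segment comparison you began with then goes through. The cleaner fix is therefore to make the separation hypothesis explicit rather than to appeal to Morse alone.
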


Suppose $G$ is hyperbolic relative to $P_1,\dots,P_k$. We would like
to find a generating set $\calg$ in which $P_1,\dots,P_k$ are convex
in the Cayley graph of $G$.  Given an set of generators $\calg'$ for
$G$ and $K>0$, set
$$\cala_i(K) = \{g \in P_i \mid d_X(1,g) \le K\}$$
and
$$\calg = \calg(K) = \calg' \cup \left(\bigcup_{i=1}^k \cala_i(K)\right).$$

\begin{lemma} \Label{generators}
Given $K$ sufficiently large, $\calg$ has the following properties:
\begin{enumerate}
\item
There are constants $A$ and $B$ with the following properties: Suppose
$w$ is a $\calg$-geodesic.  Let $S_P$ be a horosphere with $P$
conjugate to $P_i$. Suppose $w$ begins and ends at $X$ distance at
most  $d$ from $S_P$.  Then
$w = xyz$, where $\ell(x) \le A d +B$, $\ell(z) \le A d +B$, and $y\in
(\cala_{i}(K))^*$.
\item
If $w$ begins on $S_P$, $x$ is empty.  If $w$ ends on $S_P$, $z$ is empty.  In
particular, a $\calg$-geodesic evaluating into $P_i$, is written in letters
all of which lie in $P_i$.
\item
If we fix $d$ then if $w$ is sufficiently long, $y$ is
non-empty. 
\end{enumerate}
\end{lemma}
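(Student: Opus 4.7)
My plan is to exploit the $P$-action on the horosphere $S_P$ together with the super-linear contraction of the retraction $r_P$ from Review~\ref{review}~(\ref{shrink}) to control $\calg$-geodesics that come near $S_P$. First I would choose $K$ large enough that, for each $i$, the set $\cala_i(K)$ generates $P_i$ and the resulting Cayley graph of $P_i$ is quasi-isometric, via the orbit map to a basepoint $x_0\in X$, to the subspace metric inherited from $X$ on $S_{P_i}$. This is possible because $P_i$ acts cocompactly on $S_{P_i}$, so for $K$ large enough $\cala_i(K)$ contains a generating set realizing the qi constants. It follows that for any conjugate $P=hP_ih^{-1}$, an element of the coset of $P_i$ corresponding to $P$-action on $S_P$ has a bounded $\cala_i(K)$-expression whose Cayley path traces a bounded $X$-neighborhood of $S_P$.

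The core intermediate step is: any $\calg$-geodesic $u$ whose orbit-image endpoints both lie on a single horosphere $S_P$ is a word in $\cala_i(K)^*$. To prove this, I would compare $u$ with the explicit $\cala_i(K)$-word $u'$ for the same group element. Both paths in $X$ share endpoints on $S_P$; $u'$ stays within a bounded $X$-neighborhood of $S_P$ and traverses no other horoball, so by the bounded coset penetration property of relative hyperbolicity the same holds for $u$. A length comparison then forces $u\in\cala_i(K)^*$: any letter of $u$ in $\calg'$ or some $\cala_j(K)$ with $j\ne i$ would force the path to leave a uniform neighborhood of $S_P$, and rerouting the corresponding subpath along $S_P$ by $\cala_i(K)$-letters would be strictly shorter, contradicting geodesicity of $u$.

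Granting this intermediate fact, I would let $p_j\in X$ denote the orbit-image of the $j$-th vertex of the Cayley path of $w$, and set $j_1$ to be the least index with $p_{j_1}$ on the $P$-orbit realizing $S_P$ and $j_2$ the greatest such; define $x=w_1\cdots w_{j_1}$, $y=w_{j_1+1}\cdots w_{j_2}$, $z=w_{j_2+1}\cdots w_n$. A subword of a geodesic is a geodesic, so by the intermediate fact $y\in\cala_i(K)^*$. To bound $\ell(x)\le Ad+B$, I would exhibit an alternate short $\calg$-word from $p_0$ to $p_{j_1}$: first from $p_0$ to a group element $q_0$ within bounded $X$-distance of $r_P(p_0)$ in at most $\lambda d+\epsilon$ letters (using that $(G,\calg)$ is qi to $X$), then from $q_0$ to $p_{j_1}$ along $S_P$ in $\cala_i(K)$-letters. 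The second segment has bounded length because, by minimality of $j_1$, the path $\sigma_w$ is bounded away from $S_P$ on $[0,j_1)$, so Review~\ref{review}~(\ref{shrink}) gives $d_X(r_P(p_0), r_P(p_{j_1}))\le \text{const}$; since $p_{j_1}$ lies essentially on $S_P$ it is bounded from $r_P(p_{j_1})$. Symmetric reasoning bounds $\ell(z)$. Items 2 and 3 are then immediate: if $p_0\in S_P$ take $j_1=0$; and if $d$ is fixed, a long $w$ forces $\ell(y)=\ell(w)-\ell(x)-\ell(z)>0$.

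The main obstacle I expect is the intermediate fact. Bounded coset penetration gives proximity of $u$ to $S_P$, but promoting this to the letter-level statement $u\in\cala_i(K)^*$ requires a precise local exchange argument: every non-$\cala_i(K)$ letter in the interior of $u$ must be shown replaceable by a shorter $\cala_i(K)$-subword. Getting the constants to line up so that $K$ can be chosen to make this strict, uniformly over all such $u$, is the delicate point; this is where the quasi-isometric embedding of $P_i$ into $(G,\calg)$ and the super-linear contraction near $S_P$ interact, and where the specific lower bound on $K$ in the hypothesis "given $K$ sufficiently large" gets pinned down.
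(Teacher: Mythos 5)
Your outline has the right overall shape (isolate the case of a geodesic beginning and ending in the parabolic subgroup, then split a general geodesic at its first and last $P$-points), but two of the key technical steps don't work as stated, and in both places the paper's actual argument uses a different mechanism that you are missing.

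\textbf{The intermediate fact.} You claim that a non-$\cala_i(K)$ letter would ``force the path to leave a uniform neighborhood of $S_P$,'' and that one can reroute the offending subpath along $S_P$ more cheaply. Neither half of this holds. A $\calg'$-letter is an edge of bounded $X$-length independent of $K$, and an $\cala_j(K)$-letter ($j\neq i$) can also be short; neither forces the path away from $S_P$, so there is no single ``offending subpath'' to excise. The paper's argument instead establishes a uniform bound $r$, \emph{independent of $K$}, on the $X$-length of the retraction $r_P(e)$ of any edge $e$ not lying in $P$: for $\calg'$-letters this is a finiteness argument, and for $\cala_j(K)$-letters with $j\neq i$ it is Review~\ref{review}(\ref{boundedHorosphereRetraction}), since such an edge lies near a horosphere $S_{P'}\neq S_P$ and $r_{S_P}(S_{P'})$ has bounded diameter. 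With this bound $r$ in hand, a geodesic $w$ from $p$ to $q$ in $P$ all of whose edges avoid $P$ retracts to a path of $X$-length less than $r\cdot\ell(w)$ joining points within $d_1$ of $p$ and $q$, giving $\ell(w)\ge\frac{d_X(p,q)-2d_1}{r}$; whereas traveling along $S_P$ in $\cala_i(K)$-letters costs at most $\frac{d_X(p,q)+2d_1}{K-d_1}+1$. Choosing $K$ so that $K-d_1>r$ is what produces the contradiction. This is also what pins down the required lower bound on $K$ — your initial stipulation (that $\cala_i(K)$ generate $P_i$ and give a qi to $S_{P_i}$) is necessary but not the operative constraint.

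\textbf{The bound on $\ell(x)$.} You invoke Review~\ref{review}(\ref{shrink}) to bound $d_X\bigl(r_P(p_0),r_P(p_{j_1})\bigr)$ on the grounds that $w$ is ``bounded away from $S_P$'' on $[0,j_1)$. Two problems: $p_j\notin P$ for $j<j_1$ does not mean $p_j$ is far from $S_P$ in the $X$-metric, and more fatally, the hypothesis of the shrinking property requires $\min\bigl(d_X(p_0,S_P),d_X(p_{j_1},S_P)\bigr)>d_0$, whereas $p_{j_1}$ is an element of $P$ and hence lies within bounded distance of $S_P$. So the property simply does not apply at $p_{j_1}$, and the ``bounded'' you want for the second segment of your alternate path is unjustified; the $S_P$-distance from $r_P(p_0)$ to $p_{j_1}$ can a priori be large. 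The paper handles this quite differently: in the case $y$ empty it bounds $\ell(w)$ itself linearly in $d$ (comparing the retraction lower bound with the $t u v$ upper bound), and in the case $y$ nonempty it uses the Morse lemma (Lemma~\ref{quasiGeodesics}) to show that the endpoints $p''',q'''$ of $y$ lie within bounded distance of the projection points $p',q'$ once $d_X(p,q)$ is large, which then bounds $\ell(x)$ and $\ell(z)$ since $w$ is a $\calg$-geodesic. Your appeal to bounded coset penetration for the claim that $u$ stays near $S_P$ is also not the precise tool — what the paper uses is again Lemma~\ref{quasiGeodesics}, the Morse lemma for the cusped space.
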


\begin{proof}
We claim that there is a bound $r$ independent of $K$ so that if $e$
is a $\calg(K)$ edge which does not lie in $P$, then the the $X$
length of $r_P(e)$ is less than $r$.  If $e$ is an $\cala_i(K)$ edge
which does not lie in $P$, then it lies within a bounded distance of
some horosphere other than $S_P$.  By
property~\ref{boundedHorosphereRetraction} of
Proposition~\ref{review}, $r_{S_P}(S_{P'})$ has bounded diameter.
There are only finitely many $\calg'$ letters and their edges also have
bounded retractions onto $S_P$.  This gives the bound $r$.  

Now consider the case of a geodesic $w$ which begins and ends in $P$.
We wish to show that all edges of $w$ lie in $P$.  If this fails, we
replace $w$ with a sub-segment whose only contact with $P$ are its
two endpoints, $p$ and $q$.  Notice that it must therefore have length
at least 2 since it leaves and returns to $S_P$.  Let $d_1$ be the
maximum distance from $S_P$ to $P$.  Then the path $r_P(w)$ starts and
ends within distance $d_1$ of $w$.  Thus $\ell(w) \ge \frac
{d_X(p,q)-2d_1}{r}$.  Now consider an $X$-geodesic from $p$ to $q$.
This has length at most $d_X(p,q)+2d_1$ and each point of it lies
within distance $d_1$ of $P$.  It follows that the $\cala_i(K)$
distance between $p$ and $q$ is at most $\frac{d_X(p,q)+2d_1}{K-d_1} +
1$.  Choosing $K$ sufficiently large contradicts the assumption that
$w$ was geodesic.

Now consider the case in which $p$ and $q$ do not necessarily lie on
$S_P$. Let $p''$ and $q''$ be their respective projections onto $S_P$
and $p'$ and $q'$ be points of $P$ near these.  There are $\lambda$
and $\epsilon$ depending on $K$ so that the embedding of $G$ into $X$
is a $(\lambda,\epsilon)$ quasi-isometry.  Consider $tuv$ with $t$ a
geodesic from $p$ to $p'$, $u$ a geodesic from $p'$ to $q'$ and $v$ a
geodesic from $q'$ to $q$.  Then
 $$\ell(w) \le \ell(tuv) \le 2\lambda d + 2\epsilon +
 \frac{d_X(p',q')+2d_1}{K-d_1} + 1.$$ 
Now if $y$ does not appear in $w$, i.e., $w$ contains no subword lying
 in $P$, then
$$\frac{d_X(p',q')-2d_1}{r} + 1 \le \ell(w)\le 2\lambda d + 2\epsilon +
 \frac{d_X(p',q')-2d_1}{K+d_1} + 1.$$
The value of $\lambda$ can only decrease as $K$ increases, since
 $\lambda$ measures how many $\calg(K)$ letters it takes to travel a
 certain distance in $X$, and for $K$ sufficiently large, $K-d_1 >
 r$.  Thus, for any sufficiently large $K$, there is a linear bound
 $\ell(w) < A'd +B'$ on those $w$ for which $y$ is empty.

We now suppose $w = xyz$ where $y$ is the maximal portion of $w$ lying
in $P$ and is non-empty.  Let $p'''$ and $q'''$ be the endpoints of
$y$.  We claim that these must lie a bounded distance from $p'$ and
$q'$.  To see this, notice that $w$ is an $X$ quasi-geodesic.  It
follows from Lemma~\ref{quasiGeodesics} that $w$ fellow travels the
its $X$ geodesic union the horosphere's that these meet.  It is not
hard to see that if $d_X(p,q)$ is sufficiently large, this $X$
geodesic meets $S_P$ near $p''$ and $q''$.
\end{proof}

For $i \ne j$, $P_i$ and $P_j$ meet in a finite (perhaps trivial) subgroup.
We will assume that $K$ is chosen large enough so that any non-trivial
elements common to one or more subgroups appear as generators. After choosing
$K$, we will refer to $\calg(K)$ and $\cala_i(K)$ as $\calg$ and $\cala_i$.

We are now in a position to describe the \dehn algorithm of
Theorem~\ref{relativelyHyperbolicGroups}.  This depends on constants $D$ and
$E$.  For each $i$, let $(\cala_i,\A_i,S_i)$ be a $D$-tight \dehn algorithm
for $P_i$. We will assume that any rules operating inside a common subgroup
rewrite immediately to a single letter and thus, these rules agree between the
different $S_i$. We take $S_\calg$ to be a collection of local geodesic rules
which contain a left-hand side for each $\calg$ word which is not a
geodesic. We assume that these agree with any rules which also appear in some
$S_i$.  We will assume $D \ge E$.  We take $\A = \calg \cup (\cup_i \A_i)$ and
$S = S_\calg \cup (\cup_i S_i)$.  We will show that with $E$ sufficiently
large, $\cald = (\calg,\A,S)$ is a \dehn algorithm.  This requires a series of
lemmas.

We first check that the parabolic subgroup sub-\dehn algorithms are
still effectively $D$-tight within $\cald$.

\begin{lemma}\Label{goodParabolicSubwords} 
Suppose that $w$ is the result of $\cald$ reducing a $\calg$ input
word and that $u$ is a maximal $\A_i$ subword of $w$.  Then $u$ is a
reduced word for a $D$-geodesic \dehn algorithm for $P_i$.
\end{lemma}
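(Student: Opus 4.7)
The plan is to exploit the fact that $w$ is fully reduced under $\cald$, so that no left-hand side of any rule in $S = S_\calg \cup \bigcup_j S_j$ occurs in $w$, and then to package the portion of $S_\calg$ whose left-hand sides lie in $\cala_i^*$ together with $S_i$ using the $D$-tightness hypothesis. The first observation is immediate: since every rule of $S_i$ has a left-hand side consisting entirely of letters in $\A_i$, and $u$ is a maximal $\A_i$-subword of $w$, any unanchored $S_i$ left-hand side that occurs in $u$ would also occur as a subword of $w$, contradicting the reducedness of $w$ under $\cald$. Thus $u$ contains no unanchored $S_i$ left-hand side.

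Next, set $R_i$ to be the subset of $S_\calg$ whose left-hand sides are words in $\cala_i^*$ and do not already appear as left-hand sides in $S_i$. By the standing assumption that $S_\calg$ and $S_i$ agree wherever they share a left-hand side, $R_i$ is a finite collection of local geodesic rules whose left-hand sides are disjoint from those of $S_i$. The $D$-tightness of $(\cala_i, \A_i, S_i)$ then gives that $(\cala_i, \A_i, S_i \cup R_i)$ is itself a $D$-geodesic \dehn algorithm for $P_i$. The same reducedness argument as above shows that no left-hand side of $R_i$ appears inside $u$: such a left-hand side would be a $\calg$-subword of $w$ and would be reducible by a rule of $S_\calg$, again contradicting the reducedness of $w$.

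The delicate point is anchored rules of $S_i$: if $u$ begins in the interior of $w$, an anchored $S_i$-rule whose left-hand side matches the start of $u$ is not triggered by $\cald$, since it is not an initial segment of $w$. This is handled by exactly the bookkeeping used in the proof of Theorem~\ref{freeprods}: each anchored $S_i$-rule must be augmented, inside $\cald$, by unanchored variants prefixed by a non-$\A_i$ letter so that it fires at the left boundary of any $\A_i$-block. Once these variants are incorporated into $\cald$, the reducedness of $w$ forbids any anchored $S_i$ left-hand side from being an initial segment of $u$. Combined with the preceding two paragraphs, this says that $u$ is reduced with respect to $(\cala_i, \A_i, S_i \cup R_i)$, which is the desired $D$-geodesic \dehn algorithm for $P_i$.

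The main obstacle I anticipate is precisely this bookkeeping for anchored parabolic rules at the boundary of $u$; the heart of the argument, namely that reducedness of $w$ descends to reducedness of $u$, is essentially automatic from the maximality of $u$, with the $D$-tightness hypothesis supplying exactly the flexibility needed to fold the local geodesic rules of $S_\calg$ into the parabolic \dehn algorithm without losing the $D$-geodesic property.
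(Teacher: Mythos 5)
Your approach is genuinely different from the paper's, and the difference matters: the paper traces the \emph{history} by which $u$ was produced, whereas you argue only about the final, fully-reduced word $w$. The conclusion you establish is that $u$ contains no left-hand side of $S_i\cup R_i$, i.e.\ $u$ is a fixed point of the reduction map. But that is weaker than what the lemma needs. The downstream argument (just after the lemma) concludes that a parabolic subword representing a short group element is an $\cala_i$-geodesic. This invokes the $D$-geodesic property of Corollary~\ref{expandIsNGeodesic}, which is a statement about \emph{normal form words}, i.e.\ elements of $R(\cala_i^*)$, the image of the reduction map applied to $\cala_i^*$ input. Being a fixed point of $R$ does not place a word in $R(\cala_i^*)$: for the expanding-endomorphism algorithm of Section~\ref{endo}, a word such as $t^{-1}g$ (with $g$ a short geodesic not in $\phi(G)$) contains no left-hand side, yet it is unbalanced, is not in the image of $R$ on $\cala_i^*$, and the $D$-geodesic property says nothing about it. So ``$u$ is reduced with respect to $S_i\cup R_i$'' does not by itself yield the conclusion that short $u$ is an $\cala_i$-geodesic (nor even that $u$ represents a group element of $P_i$).

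The paper closes exactly this gap by a different route: since the $\A_i$ are disjoint, only $S_i$ and $S_\calg$ rules can touch the $\A_i$-block, and any input letters destined for $u$ that are not in $\cala_i$ must first be converted into $\cala_i$ letters by $S_\calg$ (geodesics into $P_i$ are $\cala_i$-words by Lemma~\ref{generators}). This lets one reinterpret the relevant portion of the $\cald$-history as a run of $(\cala_i,\A_i,S_i\cup S_\calg)$ on a genuine $\cala_i^*$ input, so that $u$ lies in the image of that reduction map; $D$-tightness then identifies that composite algorithm as a $D$-geodesic Dehn algorithm. Your reducedness observation is correct as far as it goes, but you need the history argument (or something equivalent) to upgrade ``no left-hand side'' to ``output of the reduction map on an $\cala_i^*$ input.'' A secondary point: your proposed fix for anchored rules — prefixing by a non-$\A_i$ letter as in Theorem~\ref{freeprods} — would require augmenting $S$ beyond the paper's definition $S = S_\calg\cup\bigcup_i S_i$, so as written it is a modification of the construction rather than a reading of it.
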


\begin{proof}
Consider the process by which $u$ is produced.  Since the $\A_i$ are
disjoint, for $j\ne i$, no $S_j$ rule can apply in the production of
$u$.  Consequently the formation of $u$ is carried out by $S_i$ rules
and $S_\calg$ rules.  Notice that any non-$\cala_i$ input letters
which are consumed in the production of $u$ must first be turned into
$\cala_i$ letters prior to their consumption by $S_i$.  This is done
by $S_\calg$ rules shortening non-geodesics into geodesics which must
be in $\cala_i$ letters.  Therefore, $u$ could have been produced by
applying the $S_\calg$ rules and $S_i$ rules to an input word in
$\cala_i^*$.  The result now follows from the assuption that \dehn
algorithm for $P_i$ is tight.
\end{proof}

It now follows that if $w$ is the result of $\cald$-reducing a
$\calg^*$ input word, then $w$ consists of reduced words from the
parabolic subgroups alternating with $E$-local geodesics which do not
contain any parabolic letters.  These $E$-local geodesics may be
empty, but by assuming that the parabolic subwords are maximal, we may
assume that no two adjacent parabolic subwords lie in the same
parabolic subgroup.  Note that if two or more $P_i$ meet in a
non-trivial finite subgroup, any ambiguity where one parabolic
subgroup ends and another begins can only consist of a single letter.

We will choose to decompose $w$ in a slightly different manner.  We
choose a parameter $F<D$.  We decompose $w$ as
$$w = g_0p_1,\dots,g_{m-1}p_mg_m$$
where the $p_j$ are the maximal parabolic subwords which represent
group elements of length greater than $F$.  Since all other maximal
parabolic subwords represent group elements of length less than or
equal to $D$, each is an $\cala_i$-geodesic for some $i$.  It follows
that the $g_j$ are $E$-local geodesics.  Again, some of the $g_j$ may
be empty, but not if they lie between $P_i$ words.

\begin{lemma} \Label{hyperbolicQuasiGeodesics}
For $D$, $E$ sufficiently large,there is $(\lambda,\epsilon) =
(\lambda_{D,E,F}, \epsilon_{D,E,F})$ such that each $g_i$ is a
$(\lambda,\epsilon)$-quasi-geodesic in $\H$.  While increasing $F$
weakens the quasi-geodesity, increasing $D$ and $E$ does not.
\end{lemma}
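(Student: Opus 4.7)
The plan is to combine three standard ingredients: the quasi-isometric embedding $\Gamma_\calg(G)\hookrightarrow X$ coming from the cocompact action, the relationship between $X$-geodesics and $\H$-rough-geodesics from Lemma~\ref{roughGeodesics} and Lemma~\ref{quasiGeodesics}, and the local-to-global principle for quasi-geodesics in the $\delta$-hyperbolic space $\H$. Roughly, I want each $g_i$ to look like a quasi-geodesic at a local scale comparable to $E$, and then invoke hyperbolic local-to-global with $E$ chosen large enough relative to $\delta$.

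First I would decompose each $g_i$ into its maximal subwords in $\calg\setminus\bigcup_i\cala_i$ separated by the short parabolic subwords coming from $\cala_i$ letters evaluating to elements of word length at most $F$. The long ``non-parabolic'' pieces are $E$-local $\calg$-geodesics which, under the $\calg$-to-$X$ quasi-isometry, become $E'$-local $X$-quasi-geodesics; using Lemma~\ref{generators}, taking $K$ large (which is already in force) keeps them from making shortcuts through any horoball, so the corresponding $\H$-rough geodesic tracks them within a uniform neighborhood. Each short parabolic insert evaluates to a displacement of word length at most $F$ in some $P_i$, hence to a path of bounded length in $\H$; by Proposition~\ref{quasiQuasiQuasi} we can splice these bounded detours into the $\H$-quasi-geodesic coming from the non-parabolic pieces and still obtain a $(\lambda_1,\epsilon_1)$-quasi-geodesic on windows of length $E$. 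The constants here depend on $F$ (bigger $F$ makes the bounded detours longer) but improve, or at worst stay the same, as $D$ and $E$ grow.

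Next I would upgrade this local quasi-geodesicity to genuine $\H$-quasi-geodesicity. By Proposition~\ref{goodBends} (applied to the $\H$ geodesics through the relevant points) together with the standard local-to-global theorem for quasi-geodesics in $\delta$-hyperbolic spaces, a $(\lambda_1,\epsilon_1)$-local quasi-geodesic whose local scale exceeds a threshold $L(\lambda_1,\epsilon_1,\delta)$ is a global $(\lambda,\epsilon)$-quasi-geodesic. I would therefore fix $F$ and choose $E$ larger than this threshold; I would also take $D$ large enough that the $D$-tight sub-algorithms guarantee that what I called ``short parabolic inserts'' really are short in $X$ (i.e.\ that the $\cala_i$-normal forms produced by Lemma~\ref{goodParabolicSubwords} are actual $\cala_i$-geodesics of length bounded linearly in $F$), so that the $F$-dependent estimates above are meaningful.

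The main obstacle I anticipate is the interaction between the parabolic inserts and the local-to-global argument: at a transition between a non-parabolic piece and a short parabolic insert, the concatenated path in $\H$ may bend in a way that is not controlled by simple geodesic-geodesic angle estimates, because the insert can dip slightly into a horoball where $\H$-geodesity and $X$-geodesity diverge. I expect to handle this by replacing each parabolic insert with the corresponding $\H$-geodesic segment (giving a rough geodesic in the sense preceding Lemma~\ref{roughGeodesics}) and then using the bounded-diameter retraction property (Proposition~\ref{review}(\ref{boundedHorosphereRetraction}))  together with Proposition~\ref{goodBends} to check that the spliced path still satisfies the hypotheses of the local-to-global principle, at which point the quasi-geodesic constants depend only on $D$, $E$, $F$ and the hyperbolicity data of $\H$, with monotonicity in $D$ and $E$ as claimed.
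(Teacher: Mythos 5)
Your overall strategy---establish uniform local quasi-geodesicity of each $g_i$ in $\H$, then invoke the local-to-global principle---is the same as the paper's, but the paper's route to the local estimate is considerably simpler and avoids the pitfalls your decomposition creates. The paper takes a length-$E$ window $v$ of $g_i$, treats it as a single Cayley graph geodesic (hence an $X$-quasigeodesic with constants independent of $E$), and applies Lemma~\ref{quasiGeodesics} directly: $v$ asynchronously fellow-travels its $\H$-geodesic $\gamma$ together with the horospheres $\gamma$ meets, and since $v$ carries no parabolic subword of $\cala_i$-length $>F$, Lemma~\ref{generators} forbids $\gamma$ from penetrating deep into any horoball. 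That single fact bounds the $X$/$\H$ length ratio of $\gamma$ by a constant depending only on $F$, which makes $v$ a uniform $\H$-quasigeodesic; local-to-global finishes. There is no subdivision of $g_i$ into parabolic and non-parabolic pieces at all.

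The finer decomposition you propose introduces three genuine gaps. (a) The ``non-parabolic pieces'' of $g_i$ have no length lower bound---two short parabolic inserts can be separated by a single $\calg'$-letter---so the ``long'' pieces you want to treat as $E$-local geodesics need not be long, and nothing locally controls them. (b) Proposition~\ref{quasiQuasiQuasi} replaces \emph{disjoint subpaths of an already-known quasi-geodesic} by other quasi-geodesics; you instead want to build a quasi-geodesic from scratch by concatenating non-parabolic pieces with parabolic detours, but the non-parabolic pieces do not by themselves form a connected path, and asserting they ``come from'' an $\H$-quasi-geodesic presupposes the conclusion. (c) Proposition~\ref{goodBends} requires every segment of the piecewise geodesic to have length at least $\delta'+1$; short non-parabolic pieces fail this, and so do short parabolic inserts, whose $\H$-geodesic length can be near zero even when their $\cala_i$-word length is near $F$ because horoballs compress $\H$-distance. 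The paper's holistic Morse-lemma argument sidesteps all of these by never cutting $g_i$ at the parabolic inserts in the first place.
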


\begin{proof}
Let $v$ be an $E$-local geodesic of length $E$.  This is a Cayley
graph geodesic, and hence an $X$ $(\lambda,0)$-quasi geodesic, with
$\lambda$ depending only on the embedding of $\Gamma$ into $X$.  By
Proposition \ref{quasiGeodesics}, $u$ asynchronously fellow-travels
its $\H$ geodesic $\gamma$ together with any horospheres that $\gamma$
enters.  Now $\gamma$ cannot stray far into any horosphere, for
otherwise $u$ would contain parabolic subwords of length greater than
$F$.  This bounds the ratio between the $X$ length and the $\H$ length
of $\gamma$.  Notice that this bound is independent of $E$.  Thus, by
increasing $E$, we proportionally increase the $X$-length of $u$.
That is to say, there is $(\lambda',\epsilon')$ is a Cayley graph
geodesic containing no parabolic subword of length greater than $F$,
then $u$ is an $\H$- $(\lambda',\epsilon')$-quasigeodesic.

It is a standard result for $\delta$-hyperbolic spaces that given
$(\lambda',\epsilon')$, for $E$ sufficiently large, there is
$(\lambda,\epsilon)$ so that every $E$-local
$(\lambda',\epsilon')$-quasigeodesic is a $(\lambda,\epsilon)$
quasigeodesic.  Thus, choosing $E$ (and hence, $D$) sufficiently large
makes each $g_i$ an $\H$ $(\lambda,\epsilon)$ quasigeodesic as
required.
\end{proof}

Condsider the decomposition of $w$ into
$$w = g_0p_1,\dots,g_{m-1}p_mg_m$$
as above.  Ultimately, we must show that $w$ is empty if and only if
the input word which created it represents the identity.  We will
examine several paths related to $w$, namely
$$\sigma = \sigma(w,F) =
\gamma_0\pi_1,\dots,\gamma_{m-1}\pi_m\gamma_m$$
$$\pi = \pi(w,F) =
g_0\pi_1,\dots,g_{m-1}\pi_mg_m$$
$$\nu = \nu(w,F) =
g_0q_1,\dots,g_{m-1}q_mg_m$$
where
\begin{itemize}
\item
Each $\gamma_i$ is the $\HH$-geodesic for the corresponding $g_i$,
\item
Each $\pi_i$ is the $\HH$-geodesic for the corresponding $p_i$,
\item
Each $q_i$ is a Cayley graph geodesic for the corresponding $p_i$.
\end{itemize}

\begin{lemma} \Label{quasiGeodesicPaths}
Given $D$, $E$, $F$ sufficiently large, 
\begin{itemize}
\item
There is $(\lambda,\epsilon)$
such that $\sigma$ is an $\HH$- $(\lambda,\epsilon)$-quasigeodesic.
Increasing $D$ and $E$ does not worsen this quasigeodesity.
\item
There is $(\lambda,\epsilon)$
such that $\pi$ is an $\HH$- $(\lambda,\epsilon)$-quasigeodesic. 
\item
There is $(\lambda,\epsilon)$ such that $\nu$ is  a Cayley   graph
$(\lambda,\epsilon)$-quasigeodesic. 
\end{itemize}
\end{lemma}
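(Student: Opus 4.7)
The plan is to prove the three assertions as a cascade, potentially enlarging $D$, $E$, and $F$ at each step; the parenthetical stability of the $\sigma$-claim under enlarging $D$ and $E$ ensures that the later enlargements do not undo earlier conclusions.

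First I would handle $\sigma$ by invoking Proposition~\ref{goodBends}. Each $\gamma_i$ is the $\HH$-geodesic corresponding to the Cayley-graph $E$-local geodesic $g_i$, so its $\HH$-length grows linearly in $E$; each $\pi_i$ is the $\HH$-geodesic for a parabolic element of $\calg$-length exceeding $F$, hence descends into its horoball to depth growing with $F$, giving $\HH$-length growing with $F$. Taking $E$ and $F$ large therefore forces every segment of $\sigma$ to exceed the threshold $\delta'+1$. At each bend, one of the two meeting segments enters a horoball $B$ while the other remains in $X$; property~\ref{horoballGeodesicsDepartX} of Review~\ref{review} then guarantees that the horoball-bound segment leaves any prescribed $\delta$-neighborhood of $X\setminus B$, and in particular of its companion segment, within bounded distance of the bend. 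Proposition~\ref{goodBends} then produces $(\lambda,\epsilon)$ making $\sigma$ an $\HH$-quasigeodesic, and further enlargement of $D$ or $E$ only strengthens the hypotheses.

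For $\pi$, Lemma~\ref{hyperbolicQuasiGeodesics} supplies (for $E$ sufficiently large) constants $(\lambda',\epsilon')$ making each $g_i$ an $\HH$-quasigeodesic sharing endpoints with $\gamma_i$. Replacing each $\gamma_i$ in $\sigma$ by $g_i$ and invoking Proposition~\ref{quasiQuasiQuasi} yields $\pi$ as an $\HH$-quasigeodesic with new constants depending only on the data above.

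The third claim is the main obstacle: the same substitution scheme fails in $\HH$ because each $q_i$ is a Cayley geodesic for a parabolic element, so its Cayley length can be arbitrary while its $\HH$-endpoint separation stays bounded, meaning $q_i$ is nowhere close to being an $\HH$-quasigeodesic replacement for $\pi_i$. I would instead argue in $X$: since the Cayley graph is quasi-isometric to $X$, it suffices to show that $\nu$ is an $X$-quasigeodesic. The comparison object is a rough geodesic $\tau$ (Lemma~\ref{roughGeodesics}) between the endpoints of $\nu$, built from the $\HH$-geodesic $\hat\gamma$ between those endpoints by replacing each horoball traversal with an $X$-geodesic along its horosphere. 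Because $\pi$ is an $\HH$-quasigeodesic with the same endpoints, Lemma~\ref{quasiGeodesics} places it in a bounded $\HH$-neighborhood of $\Sigma(\hat\gamma)$, so the horoballs penetrated by the $\pi_i$'s match those met by $\hat\gamma$ up to a bounded number at the ends, and the corresponding entry and exit points on each horosphere agree up to bounded $X$-distance by property~\ref{boundedHorosphereRetraction}. Each $q_i$ (which by Lemma~\ref{generators} lies in $\cala_i^*$) is then an $X$-geodesic joining two points on the horosphere that match the rough-geodesic traversal to within bounded error, while each $g_i$ is a Cayley-graph geodesic and hence an $X$-quasigeodesic shadowing the corresponding $X$-piece of $\tau$. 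Splicing these comparisons exhibits $\nu$ as a bounded $X$-Hausdorff perturbation of the $X$-quasigeodesic $\tau$, from which $\nu$ itself is an $X$-quasigeodesic and hence, through the quasi-isometry, a Cayley-graph $(\lambda,\epsilon)$-quasigeodesic.
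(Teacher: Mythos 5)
Your overall strategy for the first two bullets matches the paper's (Proposition~\ref{goodBends} for $\sigma$, then Proposition~\ref{quasiQuasiQuasi} to pass to $\pi$), and your third-bullet argument expands the paper's terse invocation of Lemma~\ref{roughGeodesics} into a reasonable comparison with a rough geodesic. But there is a genuine gap in the proof that $\sigma$ is a quasigeodesic.

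You assert that since each $g_i$ is an $E$-local geodesic, the $\HH$-length of $\gamma_i$ grows linearly in $E$, so that for $E$ large every segment of $\sigma$ exceeds the threshold $\delta'+1$ needed for Proposition~\ref{goodBends}. This is false: an $E$-local geodesic has \emph{no lower bound} on its length, and the paper notes explicitly that some of the $g_j$ may be empty (when two maximal parabolic subwords from different $P_i$ are adjacent) or may be arbitrarily short. Enlarging $E$ does nothing to exclude this. So $\sigma$ can have interior segments $\gamma_i$ of bounded or zero $\HH$-length, and Proposition~\ref{goodBends} cannot be applied directly. The paper handles exactly this case: if some $\gamma_i$ is short (of $X$-length less than $\delta'+1$), one deletes it and extends the adjacent $\pi_i$ to the start of the deleted $\gamma_i$, obtaining a piecewise geodesic $\sigma'$ that asynchronously fellow-travels $\sigma$; one then checks that $\sigma'$ satisfies the hypotheses of Proposition~\ref{goodBends} (here using property~\ref{boundedHorosphereRetraction} of Review~\ref{review}, which guarantees that two distinct horoballs see only a bounded portion of each other). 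Your proof needs this extra step; without it the argument does not go through. A secondary, smaller point: in the third bullet you invoke Lemma~\ref{quasiGeodesics} for the $\HH$-quasigeodesic $\pi$, but that lemma is stated for $X$-quasigeodesics; what you actually want there is the ordinary Morse lemma for quasigeodesics in the $\delta$-hyperbolic space $\HH$.
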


\begin{proof}
We first consider $\sigma$.  We choose $F$ sufficiently large.  Since
each $\pi_i$ is long, by property~\ref{horoballGeodesicsDepartX} of
Proposition~\ref{review}, it spends only a limited time in a
neighborhood of the exterior of its horoball.  On the other hand, each
$\gamma_i$ can only spend a bounded time in the neighborhood of the
horoballs it starts and ends at, for otherwise, by Lemma~
\ref{generators}, it would start or end in the corresponding parabolic
letters, contradicting the maximality of $p_{i-1}$ (at its beginning)
or $p_i$ (at its end).  Thus, the only way, $\sigma$ can fail to
satisfy the assumptions of Proposition~\ref{goodBends} is if one or
more of the $\gamma_i$ is short, i.e., of $X$ length less than
$\delta'+1$.  In this case, we modify $\sigma$ to produce $\sigma'$ by
deleting each short $\gamma_i$ and replacing $\pi_i$ with $\pi'_i$
starting at the beginning of $\gamma_i$.  Clearly $\sigma$ and
$\sigma'$ asynchronously fellow travel.  By
Proposition~\ref{goodBends}, $\sigma'$ is an $\HH$ quasigeodesic, and
thus, so is $\sigma$.

It now follows by Lemma~\ref{quasiQuasiQuasi} that $\pi$ is an
$\HH$ quasigeodesic.

Finally, it follows from Lemma~\ref{roughGeodesics} that $\nu$ is an
$X$ quasigeodesic and hence a Cayley graph qusigeodesic.
\end{proof}

In the case where each $P_i = \langle\cala_i\rangle$ has the
falsification by fellow traveler property, this gives Lemma~4.7 of
\cite{NS}.  It then follows that the language of geodesics in $G =
\langle \calg \rangle$ is a regular language and that the growth of $G
= \langle \calg \rangle$ is rational.  This includes the limit groups
of \cite{Sel} since, as \cite{Da} has shown, these are hyperbolic
relative to abelian subgroups.

\begin{proof} (Theorem \ref{relativelyHyperbolicGroups}) 
We suppose that $w$ is the result of $\cald$-reducing an input word in
$v \in\calg^*$.  We must show that $w$ is empty if and only if $v$
represents the identity.  Since $w$ remembers its group element, the
``only if'' part is clear.

Suppose now that $v$ represents the identity.  Then $\sigma(w)$ is an $\HH$-
quasigeodesic.  Since it represents the identity, this bounds its length.
This, in turn bounds the length of $w$.  Recall that increasing $D$ and $E$
does not worsen the quasi-geodesity of $\sigma$, and thus does not degrade the
bound on the length of $w$.  We may then assume that $D$ and $E$ are greater
than this bound.  Thus, $w$ is a geodesic, in particular, a geodesic for the
identity, and thus empty as required.
\end{proof}

\begin{proof} (Corollary \ref{negativelyCurvedManifold})
Let $G = \pi_1(M)$ where $M$ is a finite volume negatively curved
manifold with curvature bounded below and bounded away from 0.  By
\cite{F}, $G$ is hyperbolic relative to nilpotent subgroups
$P_1,\dots,P_k$. Now each $P_i$ has a \dehn algorithm by
Theorem~\ref{virtuallyNilpotent}.  However, there is no guarantee that
this is $N$-tight for $P_i$.  It is, however, $N$-tight for matrix
group $\unz$.  Given any finite generating set $\calp$ for $P_i$, we
may include these into a generating set for $\unz$.  Now, if $N\ge 1$,
any $N$-tight \dehn algorithm for $\unz$ is $1$-tight.  It follows
that for each $p\in\calp$, $p$ is the unique reduced word for
itself. In particular, this is a $1$-tight \dehn algorithm for $P_i$. 

Since Lemma~\ref{generators} holds for any sufficiently large $K$, we
can assume that $\cala_i$ contains any finite subset of $P_i$ we
select.

Now consider the paths of Lemma~\ref{quasiGeodesicPaths}.  The
decompositions depend on a parameter, $F$, and this parameter is
stated in terms of Cayley graph length.  However, it is only used to
ensure that each $\pi_j$ is long, i.e., that the $\HH$ geodesic of
this group element is long.  By choice of $K$ and hence, $\cala_i$, we
can force this to be the case for any parabolic group element whose
reduced word is at least two letters long.  The proof now proceeds as
before. 
\end{proof}

\section{Histories, Compression, Splitting and Splicing}\label{splicingEtc}

\noindent This and the following section are devoted to showing that
certain groups do not have \dehn algorithms. In this section we
develop tools that apply to any deterministic length-reducing
rewriting system.  Thus we will be able to show that a particular
group $G$ has neither a \dehn algorithm, nor a non-incremental \dehn
algorithm.  We believe that these results also hold for
non-deterministic \dehn algorithms.  These latter are related to
growing context sensitive languages.  Extension of our methods to this
case is work in progress.

Let $w_0,\ldots,w_n$ be the sequence of words produced as a rewriting
algorithm makes $n$ substitutions on $w_0$. We call this sequence the
{\em history to time $n$ of $w_0$}.
We can draw a {\em diagram} of the history as follows. 
Draw $w_0$ as a row of $\ell(w_0)$ adjacent unit squares, labelled
with the letters of $w_0$. 
For each $i>0$ we draw $w_i$ below $w_{i-1}$ as follows. Draw a line
segment under the first \lhs\ appearing in $w_{i-1}$. (We call this a
{\em substitution line}.) Underneath it put a row of equal width,
height 1 rectangles, labelled with the corresponding \rhs, or if the
\rhs\ is empty, put a single black rectangle. Fill the remainder of the row
with a copy of whatever appears in that part of $w_{i-1}$. 

The {\em width of a letter of} $w_i$ is the width
of its rectangle in the diagram. The {\em width of a subword of}
$w_i$, not to be confused with its length, is the sum of the widths
of the letters making it up (i.e.\ disregarding any black rectangles). 

In order to get a handle on how the number of letters in a word
decreases as the algorithm runs, we consider how the widths of letters
increase. 

\begin{lemma} \label{widerletter}
Let $W$ be the length of the longest \lhs\ of the rewriting system. 
In a diagram, the letters of any \rhs\ (under a substitution line)
have width at least $W/(W-1)$ times that of the narrowest letter in
the corresponding \lhs\ (above the substitution line). 
\end{lemma}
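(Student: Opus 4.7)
The plan is to unwind the definition of the diagram directly. When a substitution line is drawn under the first LHS $u$ of $w_{i-1}$, the row beneath has the same total horizontal extent as $u$, but is divided into $\ell(v)$ equal-width rectangles, where $v$ is the corresponding RHS. So every letter of $v$ has width exactly
\[
\frac{\text{width}(u)}{\ell(v)},
\]
where $\text{width}(u)$ is the sum of the widths of the letters of $u$ in $w_{i-1}$.

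Next I would bound this ratio from below. Write $k = \ell(u)$ and let $w_{\min}$ denote the width of the narrowest letter of $u$. Then $\text{width}(u) \geq k\,w_{\min}$, while the strict length-decreasing property forces $\ell(v) \leq k-1$. Combining these,
\[
\frac{\text{width}(u)}{\ell(v)} \;\geq\; \frac{k}{k-1}\,w_{\min}.
\]

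Finally, I would use that $k \leq W$ (since $W$ is the length of the longest LHS). The function $k \mapsto k/(k-1) = 1 + 1/(k-1)$ is decreasing, so $k/(k-1) \geq W/(W-1)$ for every $k \leq W$. Substituting yields the desired inequality. There is no real obstacle here: the only subtlety is remembering that RHS rectangles are drawn of equal width (so one cannot distribute the width unevenly to cheat), and that the inequality $k/(k-1) \geq W/(W-1)$ improves for shorter LHSs, so the worst case is indeed $k = W$.
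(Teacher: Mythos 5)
Your argument is correct and is essentially the paper's: the paper phrases it as first equalizing the LHS letter widths (discarding any black rectangles), then removing at least one of at most $W$ letters, while you compute the quotient $\mathrm{width}(u)/\ell(v)$ directly and bound it. One small imprecision: if black rectangles from earlier deletions sit between the LHS letters, the horizontal extent under the substitution line exceeds $\mathrm{width}(u)$, so each RHS letter has width \emph{at least} $\mathrm{width}(u)/\ell(v)$ rather than exactly that value; the inequality goes the right way, so the bound and the conclusion are unaffected.
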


\begin{proof}
If we were to first make all the letters of the \lhs\ equal in width,
deleting any black rectangles which appear, we would certainly not
make the narrowest letter any narrower. Then at least one letter, of at most
$W$, is removed, giving a further expansion of at least the
stated factor. 
\end{proof}

Next we define the {\em generation} of each letter in a diagram.
The generation of each letter in the first row is 0. The generation of
a letter in row $i>0$ is the generation of the letter above it, if it
is not in a \rhs, or one more than the least generation of the letters
above the substitution line, if it is in a \rhs. 

\begin{lemma} \label{width}
If the generation of a letter is $n$ then its width is at least
$\left(\frac{W}{W-1}\right)^n$. 
\end{lemma}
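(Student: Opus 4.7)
The plan is to prove Lemma~\ref{width} by a straightforward induction on the generation $n$, essentially stacking one application of Lemma~\ref{widerletter} per generation.

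For the base case $n=0$, every letter of generation $0$ sits in row zero, where by construction each letter occupies a unit square, so its width is $1 = \left(\frac{W}{W-1}\right)^0$.

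For the inductive step, suppose the bound holds for all letters of generation less than $n$, and let $\ell$ be a letter of generation $n>0$. By the definition of generation, $\ell$ must lie in a right-hand side immediately below some substitution line (otherwise it would share the generation of the letter directly above it, to which the inductive hypothesis already applies at level $n$). Also by definition, every letter of the corresponding left-hand side (above the substitution line) has generation at least $n-1$, and at least one of them has generation exactly $n-1$. In particular, the narrowest letter above the substitution line has generation $\geq n-1$, so by the inductive hypothesis its width is at least $\left(\frac{W}{W-1}\right)^{n-1}$. Applying Lemma~\ref{widerletter}, the width of $\ell$ is at least
\[
\frac{W}{W-1}\cdot\left(\frac{W}{W-1}\right)^{n-1} = \left(\frac{W}{W-1}\right)^n,
\]
as required.

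There is no real obstacle here; the content of the lemma is entirely packaged inside Lemma~\ref{widerletter}, and the definition of generation is engineered precisely so that this induction goes through. The only thing to be careful about is the case distinction in the inductive step (letter copied from the row above versus letter appearing in a newly-introduced right-hand side), and the observation that in the copied case the inductive hypothesis applies directly at the same generation.
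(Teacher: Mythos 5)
Your proof attempts induction on the \emph{generation} $n$, whereas the paper inducts on the \emph{row} of the diagram, and this difference is not cosmetic: as written, your inductive step has a gap. In the case where $\ell$ lies in a newly-created right-hand side of generation $n$, you correctly observe that every letter of the corresponding left-hand side has generation at least $n-1$ (since the generation of $\ell$ is one more than the minimum), and then assert that the \emph{narrowest} such letter therefore has width at least $\left(\frac{W}{W-1}\right)^{n-1}$ ``by the inductive hypothesis.'' But your inductive hypothesis covers only letters of generation strictly less than $n$, i.e.\ generation exactly $n-1$. Nothing forces the narrowest letter of the left-hand side to be one of minimum generation: the lemma gives a lower bound on width in terms of generation, not a monotone correspondence between the two, so the narrowest letter might well have generation $n$ or higher. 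In that case your hypothesis gives you no information about its width, and the step fails. (The parenthetical in the preceding paragraph, appealing to ``the inductive hypothesis \ldots at level $n$'' for a letter sharing generation $n$, has the same flavour of reaching for the conclusion before it is available; this could be patched by tracing upward through the diagram, but that is really a row-induction in disguise.)

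The paper sidesteps all of this by inducting on the row index. Then the hypothesis applies to \emph{every} letter of the previous row, including the narrowest left-hand side letter whatever its generation $g$ may be; one gets its width is at least $\left(\frac{W}{W-1}\right)^{g}$, combines with Lemma~\ref{widerletter}, and finishes by noting the key inequality that the generation of each right-hand side letter exceeds $g$ by \emph{at most} one. To repair your argument you should either switch to the row induction, or explicitly track the generation $g$ of the narrowest left-hand side letter (rather than replacing it with the lower bound $n-1$) and appeal to a hypothesis strong enough to cover that letter; as it stands, the generation-only induction does not close.
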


\begin{proof}
True for row 0. Suppose it is true for row $i-1$. Since each letter in
row $i$ not in a \rhs\ has the same generation and width as the
corresponding letter in the row above, the assertion holds for these
letters. By Lemma~\ref{widerletter} any letter in a \rhs\ is at least
$W/(W-1)$ times the width of the narrowest letter in the corresponding
\lhs. But the generation of each \rhs\ letter
exceeds the generation of the narrowest \lhs\ letter by {\em at most}
one. Since the assertion is assumed to hold for the narrowest letter in
the \lhs., it holds for the letters of the \rhs.
\end{proof}


\begin{figure}
\includegraphics[height=6cm]{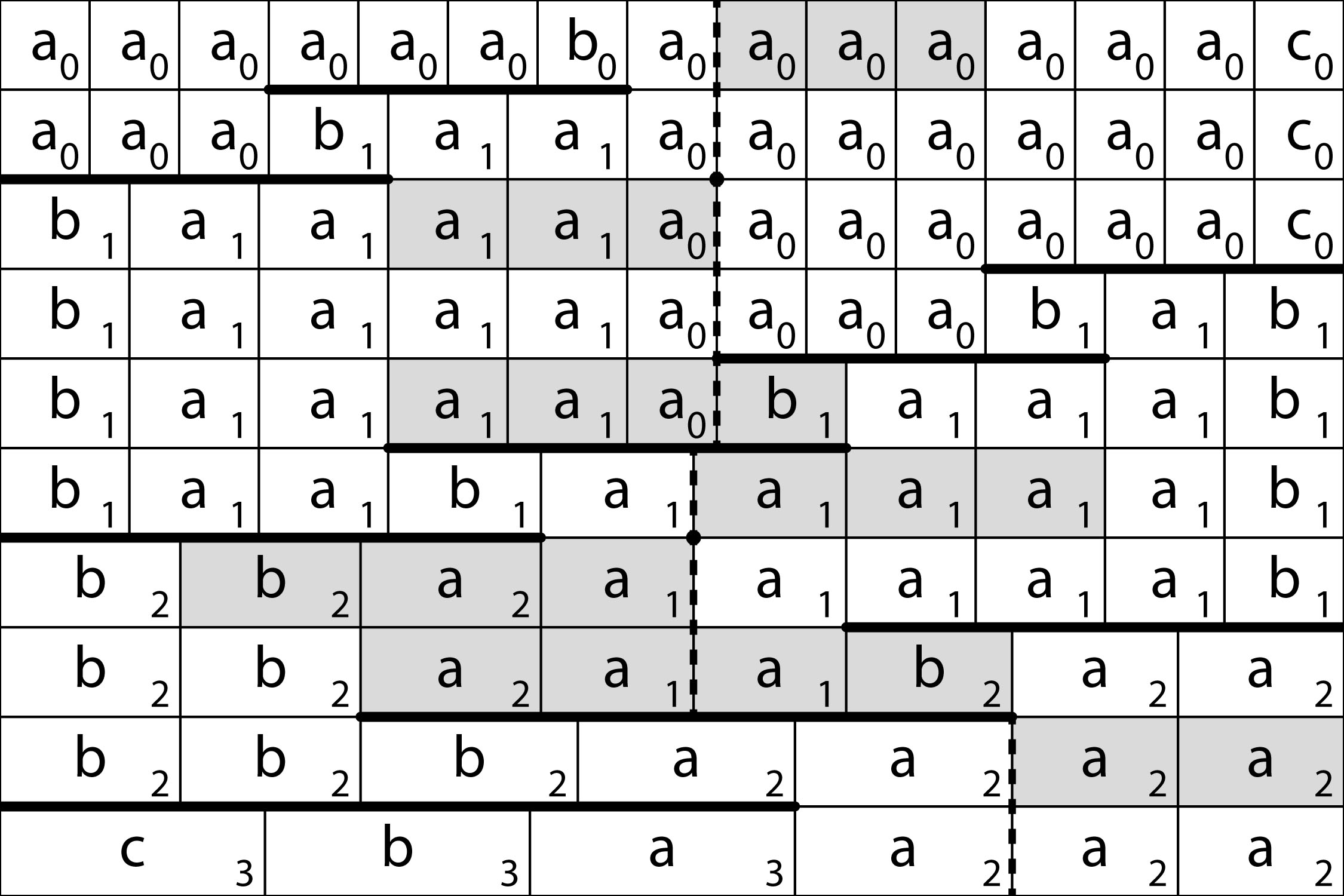}
\caption{A diagram for one possible history of a rewriting system. The
numbers indicate generations. Dotted lines indicate a
splitting path.}
\label{splittingpath}
\end{figure}

\begin{definition} 
See Figure~\ref{splittingpath}.  A {\em splitting path} of length $n$
in a diagram for $w_0\ldots w_t$ consists of $n$ vertical line
segments running between letters, from the top of the diagram to the
bottom, such that successive segments either join end to end, or are
linked by a substitution line. Segments may not cut substitution
lines. For each segment substitution lines 
between the top and bottom of the segment, all lie to the same side of
the segment. 
\end{definition}

\begin{lemma} \label{pathexists}
If $w_t$ contains a letter of generation $g$, then the diagram 
contains a splitting path of length at most $2g+2$ ending next to the
letter. We may choose the path to end on either side of it.
\end{lemma}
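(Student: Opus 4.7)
The plan is induction on the generation $g$. For the base case $g=0$, the letter $L$ must already be present in $w_0$, so its column $[c_L,c_R]$ is identical in every row of the diagram and, since $L$ is never consumed as part of any left-hand side, no substitution line crosses this column. If all the substitution lines in the diagram lie on one side of $L$'s column, the single vertical segment along the opposite edge of $L$ running top to bottom is already a splitting path of length $1$. In the mixed case I would use two segments: route a first segment from the top down along an outer column where the intervening substitution lines all lie on one side, jog along a substitution line whose endpoint coincides with $c_L$ or $c_R$, and descend with a second segment to the bottom. The linking substitution line should be chosen so that both segments satisfy the same-side condition---for instance as the substitution line closest to the bottom that touches $L$'s column on the chosen side---and the symmetric construction handles the opposite side.

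For the inductive step $g>0$, I would trace $L$'s column upward through the inherited relationships until reaching the earliest row $i$ in which $L$ appears. Because $g>0$, the letter $L$ must appear there as a right-hand side letter of the substitution whose substitution line is $s_i$, and by the definition of generation the left-hand side of $s_i$ (in $w_{i-1}$) must contain at least one letter $L'$ of generation $g-1$. Applying the inductive hypothesis to $L'$ in the sub-diagram for $w_0,\dots,w_{i-1}$ yields a splitting path $P$ of length at most $2(g-1)+2=2g$ ending next to $L'$ on the side of our choice. I would then extend $P$ by jogging along $s_i$ from the endpoint of $P$ to the edge of $L$'s column at $c_L$ or $c_R$ (both lie within the column range of $s_i$, so this jog is legitimate), and by appending at most two further segments, constructed exactly as in the base case, to descend from row $i$ down to row $t$ along $L$'s now-stable column. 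Since at most two segments are added beyond $P$, the resulting path has length at most $2g+2$, and the choice of which side of $L$ the path ends on propagates from the corresponding choice at the step below.

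The main obstacle throughout will be maintaining the same-side condition on each newly added segment; I would handle this by carefully choosing the columns of the added segments (typically outer columns, where substitutions are confined to a single side) and the linking substitution lines. The key structural fact being exploited is that $L$'s column is never cut by any substitution line in the region where $L$ is stable, which is what makes the two-segment base-case construction go through and allows it to be re-used in the inductive extension.
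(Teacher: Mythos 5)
Your inductive framing (trace $L$ up to the substitution that created it, invoke the hypothesis on a generation-$(g-1)$ letter $L'$ in the left-hand side, jog along that substitution line, and descend) is essentially a recursive restatement of the paper's single top-down traversal, and the count $2(g-1)+2 + 2 = 2g+2$ matches. But the proof has a real gap where you assert, both in the base case and in the inductive extension, that two segments suffice to descend along $L$'s stable column. You justify this only by the fact that no substitution line cuts $L$'s column, but that alone does not rule out substitutions appearing alternately on the left and the right of the chosen edge many times, which would force more than two segments. The missing ingredient is the observation (which the paper states explicitly) that within the vertical range of any candidate segment, a substitution entirely to the right cannot be followed later by one entirely to the left without an intermediate substitution crossing the column: consecutive substitution lines in the diagram either overlap or the later one lies to the right, because the algorithm always chooses the earliest-ending (resp.\ earliest-starting) left-hand side. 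Hence, within the segment's range, the side of the substitutions can change from left to right at most once, and a single split at that transition point, keeping both sub-segments on the \emph{same} column and joined end to end, produces the two segments. Your description of the base-case mixed case---a first segment on an ``outer column'' jogging along a substitution line whose endpoint meets $c_L$ or $c_R$---does not correspond to this: no substitution line spans from $c_L$ to $c_R$ (nothing crosses $L$'s column), and a segment placed away from $L$'s column has no reason to see substitutions on only one side. The correct move is much simpler: keep both halves on the same edge of $L$'s column and split at the unique left-to-right transition row. With that observation supplied and the base-case construction corrected accordingly, your induction goes through.
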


\begin{proof}
Start at the bottom of the diagram with a vertical segment next to the
letter of generation $g$. Extend upward until we come to a
substitution line. Above that line will be a letter of generation
$g-1$. Start a new segment next to that letter and continue on
up. After hitting at most $g$ substitution lines we reach the top of
the diagram. (If we hit an endpoint of a substitution line
we start a new segment only if the letter we are following is
under the line.)

This is not yet a splitting path: our vertical segments could still
have substitution lines on both sides. When this happens it can only
be with substitutions to the left in the upper part of the segment and
to the right in the lower. (A sequence of substitutions going right to
left would have to cross the vertical segment because such
substitutions always overlap.) We split each such segment at the
appropriate point and we are done.
\end{proof}

Associated with each splitting path are its {\em details:} For each
vertical segment we record whether any substitutions take place to the
left or the right. (If neither, we can arbitrarily designate it as
left.) For a left segment we record the first $W-1$ letters to its
right (which will be constant), or to the end of the word if nearer. 
For a right segment we record the $W-1$ letters to the left, or
to the start of the word if nearer. If a segment ends on a
substitution line we record the \lhs, the position at which the path
splits it (in the range $0-W$) and the position at which the next
segment splits the \rhs\ (in the range $0-(W-1)$). 

We say that two splitting paths (in different diagrams) are
{\em equivalent} if they have the same details. 
(Note: we do not require vertical segments to be the same
height.) 

For example, the details of the splitting path shown in
Figure~\ref{splittingpath} might be given as: (left, ``aaa''), (right,
``aaa'', ``aaab'', 3, 2), (left, ``aaa''), (right, ``baa'', ``aaab'',
2, 3), (left, ``aa''). 

\begin{remark} \label{numsplits}
{\rm 
There are no more than $(2(W+1)^2|\A|^{2W+1})^{n+1}$ equivalence 
classes of splitting path of length less than or equal to $n$.
}
\end{remark}

Given a splitting path for $w_0,\ldots,w_t$, we define $w_i^-$ and
$w_i^+$ to be the subwords of $w_i$, to the left and the right
respectively of the path. 
The next lemma can be interpreted as telling us that the detail of the
splitting path is like a message that is passed between
$w_0^-$ and $w_0^+$: if $v_0^-$ sends the same message as $w_0^-$,
$w_0^+$ won't notice the change.

\begin{lemma} \label{splicing}
Let $v_0,\ldots,v_r$ and $w_0,\ldots,w_s$ contain equivalent splitting paths. 
Then the history of $v_0^- w_0^+$, up to a suitable time, contains
an equivalent splitting path, and ends with the word $v_r^- w_s^+$.
\end{lemma}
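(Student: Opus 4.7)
The plan is to induct on $r+s$, showing that the rewriting algorithm applied to $v_0^- w_0^+$ executes, in order, a suitable interleaving of the two histories: substitutions on the left of the (common) splitting path come from the $v$-history, those on the right from the $w$-history, and the linking substitutions (those that terminate one segment and initiate the next) are taken from either history, since the matching details guarantee they coincide.

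First I would partition each history according to its splitting path. The substitutions of each history split into blocks indexed by path segments, all on the designated side for that segment, with linking substitutions between consecutive blocks. By equivalence of details, both histories have the same number of blocks, the same side-labels, and the same linking \lhs s and split positions.

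For the inductive step, take the first segment and assume it is left-labeled (the right case is symmetric). During its lifetime the substitutions in the $v$-history take $v_0$ to $v_k$ while $v_i^+$ stays constant; in particular the first $W-1$ letters of $v_0^+$ form a fixed string $y$, and by the recorded detail $y$ also equals the first $W-1$ letters of $w_0^+$. Apply those $k$ substitutions to $v_0^- w_0^+$ in order. I must check that at each step the algorithm really selects the $v$-history's next substitution: any rival candidate \lhs\ in $v_i^- w_0^+$ that would end earlier (for the \ira) or start earlier (for the \nira) either lies entirely in $v_i^-$ and so contradicts the $v$-history's choice in $v_i$, or it straddles the junction and uses only letters of $v_i^-$ together with the first $W-1$ letters of $w_0^+$, which equal those of $v_i^+$, so that same candidate reappears in $v_i$ and again contradicts the $v$-history. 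Anchored and end-anchored rules transfer verbatim because the start of $v_0^- w_0^+$ is the start of $v_0$ and its end is the end of $w_0$. After these $k$ steps the current word is $v_k^- w_0^+$. If the first segment is linked to the second by a substitution line, the stored detail specifies a \lhs\ that straddles the junction identically in both histories, so the last letters of $v_k^-$ together with the first letters of $w_0^+$ form that \lhs, and the same substitution fires, producing $v_{k+1}^- w_{j+1}^+$ for the corresponding $w$-history index $j$. This positions us at the head of the second segment with strictly fewer than $r+s$ total substitutions remaining, and the inductive hypothesis finishes the argument and simultaneously witnesses an equivalent splitting path in the new diagram.

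The main obstacle is the boundary check: preventing the algorithm from latching onto some spurious straddling \lhs\ in $v_0^- w_0^+$ that never occurred in either source history. The recorded $W-1$ context letters on the opposite side of every segment are exactly what is needed to transport any such hypothetical candidate back into the $v$- or $w$-history for a contradiction. Once that is in place, the remaining tie-breaking subtleties (anchored rules, the \nira\ variant where start-positions and end-anchoring must be handled, and verifying that the splitting path assembled in the combined diagram really has the same details) are routine bookkeeping.
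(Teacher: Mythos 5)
Your proof is correct and takes essentially the same approach as the paper's: cut both diagrams along their splitting paths, glue the left half of the $v$-history to the right half of the $w$-history, and verify segment-by-segment that the rewriting algorithm selects the intended substitution by using the recorded $W-1$ context letters to transport any spurious straddling candidate back into one of the original histories for a contradiction. The only structural difference is that you frame the gluing as an induction on segments, while the paper assembles the whole spliced history at once (explicitly noting that unequal segment heights are reconciled by copying the constant side) and then verifies; the content of the verification is identical.
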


\begin{proof}
Cut the histories of $v_0$ and
$w_0$ along their respective splitting paths. Fit the left
half of $v_0$'s history with the right half of $w_0$'s history. The
lengths of vertical segments are most likely unequal: one side or the
other is constant so we just make as many copies of the constant side
as required to fit the two together. 

We claim that
in the resulting sequence of words, each word differs from the
next by replacing a \lhs\ with its corresponding \rhs. 
This is clear when both words lie on the same segment or on successive
segments joined end-to-end. 
In the remaining case, both path details record the same \lhs, split at
the same point, and identical splitting points in the corresponding \rhs.


We still have to show that the left-hand sides at which changes
occur are those that would be chosen by the algorithm. 
Consider words joined at a left segment. The \lhs\ begins in
$v_i^-$ (and ends in it as well, unless it is one of the \lhs s on the
path). Therefore, from the start of the \lhs, the next $W$ letters are
the same whether $v_i^-$ is completed by $v_i^+$ or $w_j^+$ (since
these begin with the same $W-1$ letters). The algorithm will therefore
substitute at the same place in either word. 
Now consider words joined at a right segment. The \lhs\ ends somewhere
in $w_j^+$. Any \lhs\ in $v_i^- w_j^+$, starting to the
left of this one, would have to start within $W-1$ letters of the end of
$v_i^-$, for otherwise it would be a \lhs\ in $v_i$ (wholly to the
left of a right segment). But in view of this, the same \lhs\ would appear
in $w_j$ (since $w_j^-$ and $v_i^-$ end with the same $W-1$
letters). 

It follows that we have constructed the history of $v_0^- w_0^+$.
That it contains a copy of the same splitting path, and ends with
$v_r^- w_s^+$, is clear.
\end{proof}

We would like to be able to say that $v_r^-$ is determined by $v_0^-$
and the splitting path but unfortunately this is not quite true. Let
$[v_r^-]$ denote the first word to the left of the last
segment in the splitting path. What the proof of
Lemma~\ref{splicing} shows is that $[v_r^-]$ is determined by $v_0^-$
and the splitting path. If the last segment is a right segment,
then $v_r^- = [v_r^-]$, but if not
the best we can say is that $v_r^-$ is obtained from
$[v_r^-]$ by substitutions entirely inside the latter. Similar
statements hold for $w_s^+$. 

\subsection{Subwords and border letters}
We extend the results of this section to subwords. If
$v_0,\ldots,v_t$ is a history of $v_0$, and $w_0$ is a subword of
$v_0$, how shall we define the history of $w_0$? We can do it by
fixing a {\em deletion convention} for the rewriting system: for each
\lhs, decide which letters are deleted and which are changed to get
the corresponding \rhs. It is then determined, when a substitution takes
place over the boundary of $w_i\subset v_i$, which letters of the
\rhs\ belong to $w_{i+1}$ and which do not. More generally we can
consider $v_0$ to be split up into arbitrarily many subwords; a
deletion convention will determine how each $v_i$ is to be split up. 

We want to define the diagram of $v_0$'s history in such a way that
each subword gets its own ``sub-diagram''. In other words, we want the
history of $w_0\subseteq v_0$ to occupy a rectangular block underneath
$w_0$. Therefore, when a substitution takes place over a subword
boundary, we adjust the widths of the \rhs\ letters on either side of
the new boundary to keep it vertically aligned under the previous
boundary. The problem that arises is that a deletion may occur on one
side only of the subword boundary: in that case
Lemma~\ref{widerletter} fails. We make the following adjustments. 


We designate the $W-1$ letters to either side of a subword boundary as
{\em border letters} (see Figure~\ref{subword}).  When a \rhs\
contains both border and non-border letters, we assign widths as
follows. The number of border letters will be the same in the \rhs\ as
in the \lhs, so we line them up under the border letters of the \lhs\
and keep their widths the same; we expand the non-border letters to
fill the remaining space evenly. Otherwise we assign widths as
previously stated. Note that when a \lhs\ contains a subword boundary,
the \rhs\ will consist only of border letters. Now
Lemma~\ref{widerletter} holds for all non-border letters.

\begin{figure}
\includegraphics[height=6cm]{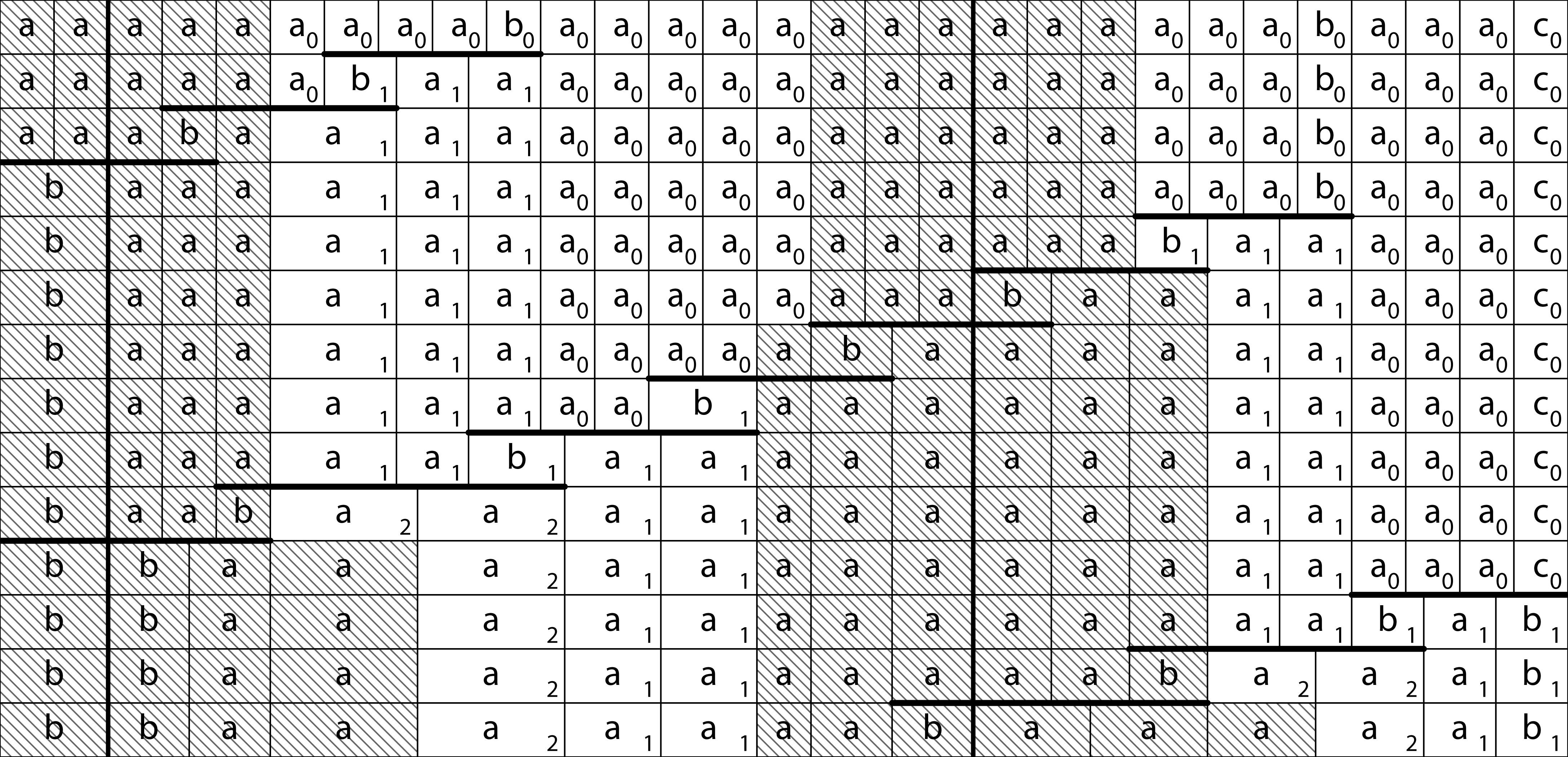}
\caption{A diagram for a history with subwords. Solid vertical lines
indicate subword boundaries. 
Cross-hatching indicates border letters. 
}
\label{subword}
\end{figure}


We restrict the definition of the generation of a letter to non-border
letters: the {\em generation} of a non-border letter in a \rhs\ is one
more than the least generation of the non-border letters of the
corresponding \lhs. With this adjustment, Lemma~\ref{width} goes
through. 

Splitting paths are defined as before except that we forbid any of the
words included in the detail to cross a subword boundary.
Lemma~\ref{pathexists} gives us such a path since we follow the edges
of non-border letters (letters for which the generation is
defined). Clearly a splitting path for $w_0\subseteq v_0$ is also one
for $v_0$.

\def\Wfrac{\left(\frac{W}{W-1}\right)}

We show that the number of splitting paths required to split all
subword histories, with a given starting length, is bounded by a
polynomial function of that starting length. This bound is independent
of the total number of substitutions in the history. It follows that
if we have enough histories, two of them will have equivalent
splitting paths.

\begin{lemma} \label{polysplits}
Let $w_0$ be a subword of $v_0$ of length $\ell(w_0) = N$, and  
let $w_0,\ldots,w_t$ be such that $\ell(w_t) \geq 2W-1$.
Then $w_t$ has a splitting path in one of at most $C_1N^{C_2}$
equivalence classes, where $C_1,C_2$ are positive constants depending
only on $|\A|$ and $W$. The splitting path can be chosen to
end next to any non-border letter of $w_t$. 
\end{lemma}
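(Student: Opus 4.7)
The plan is to chain together three results already established in the section: the width-versus-generation inequality (Lemma~\ref{width}), the short splitting path construction (Lemma~\ref{pathexists}), and the counting bound on equivalence classes (Remark~\ref{numsplits}). The one new observation to slot in is a width bound for non-border letters of $w_t$, coming from the fact that the sub-diagram sitting underneath $w_0$ occupies a fixed strip of width $N$.

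First I would observe that each row of the sub-diagram below $w_0$ has total horizontal extent $N = \ell(w_0)$, and the letters in that row partition this width. In particular, every non-border letter appearing in $w_t$ inside the $w_0$-strip has width at most $N$. Combined with Lemma~\ref{width}, this gives
\[ \Wfrac^{g} \le N, \qquad\text{so}\qquad g \le C_0 \log N, \]
where $g$ is the generation of any such letter and $C_0 = 1/\log(W/(W-1))$ depends only on $W$. The hypothesis $\ell(w_t)\ge 2W-1$ simply ensures that at least one non-border letter exists, since at most $2(W-1)$ letters of $w_t$ can be border letters (the initial and final $W-1$ positions).

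Next I would invoke Lemma~\ref{pathexists}, which the paper has already confirmed carries over to the subword setting: given any non-border letter of generation $g$ in $w_t$, there is a splitting path of length at most $2g+2$ ending next to that letter, and moreover the side on which the path ends can be chosen freely. So for any designated non-border letter the path has length at most $2C_0\log N + 2$. Finally I would apply Remark~\ref{numsplits}: writing $K = 2(W+1)^2|\A|^{2W+1}$, the number of equivalence classes of splitting paths of length $\le n$ is at most $K^{n+1}$; with $n = 2C_0\log N + 2$ this yields
\[ K^{2C_0 \log N + 3} \;=\; K^3 \cdot N^{2C_0 \log K}, \]
which is of the required form $C_1 N^{C_2}$ with $C_1 = K^3$ and $C_2 = 2C_0\log K$, both depending only on $|\A|$ and $W$.

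The only delicate point is the width upper bound on non-border letters. One must check that the adjustments for subword boundaries (border letters keep their widths while non-border letters in the same \rhs\ are expanded evenly to fill the remainder) do not let a non-border letter in the $w_0$-column bulge outside the strip of width $N$. But these adjustments only redistribute widths within each row without altering the horizontal extent of the column under $w_0$, so the bound is immediate. With this observation, the three-step chain above is routine; I expect no genuine obstacle beyond bookkeeping.
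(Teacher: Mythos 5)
Your argument is correct and follows the paper's own proof essentially verbatim: bound the generation of a non-border letter of $w_t$ by $\log_{W/(W-1)}N$ via Lemma~\ref{width} and the fact that widths in the $w_0$-strip cannot exceed $N$, then apply Lemma~\ref{pathexists} and Remark~\ref{numsplits} to turn the logarithmic length bound into a polynomial count of equivalence classes. Your extra remark justifying why the border-letter width conventions keep non-border widths within the strip is a harmless elaboration of a point the paper leaves implicit.
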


\begin{proof}
Since $\ell(w_t) \geq 2W-1$ it contains at least one non-border letter.
Choose one, and let $g$ be its generation. By Lemma~\ref{width}, its
width is at least $\Wfrac^g$. But since this cannot exceed $N=\ell(w_0)$, the
width of $w_0$, we have $g\leq \log_{\Wfrac}N$. 

By Lemma~\ref{pathexists}, we can find a splitting path, ending next
to our chosen letter, of length at most $2g+2$. By
Remark~\ref{numsplits}, the number of classes of splitting path, of
length $\leq 2g+2$, does not exceed $(2(W+1)^2 |\A|^{2W+1})^{2g+3}$.  Since
$g$ is bounded by a logarithm of $N$, the result follows.
\end{proof}

We consider now a word divided into two subwords $u_0,v_0$. We keep $v_0$
fixed, vary $u_0$, and run the algorithm for some amount of
time. Intuitively speaking, our algorithm carries information between
the two subwords, giving in principle a number of possible values for
$v_t$ which is exponential in $\ell(v_0)$. 
We show that the number of distinct $v_t$ that can actually arise is
only polynomial in $\ell(v_0)$.

\begin{lemma} \label{polynomial}
Let $v_0$ be a fixed word of length $N\geq 1$ in $\A^*$. For each word
$u_0$ we choose a time $t$ and let $u_t v_t$ be the
result of applying $t$ substitutions to $u_0 v_0$; $v_t$ is then a
function of $u_0$. There exist positive constants $C_0, C$, depending only on
$|\A|$ and $W$, such that $v_t(u_0)$ takes at most $C_0 N^C$ distinct
values as $u_0$ varies. The same bound applies if we instead define
$v_t u_t$ to be the result of applying $t$ substitutions to $v_0 u_0$. 
\end{lemma}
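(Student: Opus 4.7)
The plan is to parameterize $v_t(u_0)$ by the equivalence class of a splitting path inside $v_0$'s sub-diagram. Lemma~\ref{polysplits} gives at most $C_1 N^{C_2}$ such classes, and Lemma~\ref{splicing} (in its subword form) will show that each class pins down $v_t$ up to a bounded ambiguity plus one further factor of $N$.

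First I would dispose of the trivial case $\ell(v_t) < 2W-1$: then $v_t$ is one of at most $|\A|^{2W-2}$ words, a constant independent of $N$. Assume henceforth that $\ell(v_t) \geq 2W-1$, so $v_t$ contains a non-border letter. For each $u_0$, select the leftmost non-border letter of $v_t$ and apply Lemma~\ref{polysplits} (treating $v_0$ as a subword of length $N$) to produce a splitting path, inside $v_0$'s sub-diagram, ending next to this letter. By that lemma, there are at most $C_1 N^{C_2}$ equivalence classes of such paths.

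Fix an equivalence class $c$. The path cuts $v_t = v_t^L v_t^R$, with $v_t^L$ consisting of at most $W-1$ border letters and hence taking at most $|\A|^{W-1}$ values. For $v_t^R$, I claim that as $u_0$ varies over inputs realizing class $c$ (and a given starting column of the path inside $v_0$, of which there are at most $N$ per $c$), the words $v_t^R$ range over a single deterministic chain of length $\leq N$. Starting from the initial right portion $V_0^R$ of $v_0$ relative to the path, the subword $v_t^R$ can change only through (a) an internal substitution lying entirely in the right columns, or (b) a path-crossing substitution at a segment transition. Case (a) is dictated by the current content of $v_t^R$ via the algorithm's deterministic rules, and case (b) is dictated by the segment detail recorded in $c$; neither effect depends on $u_0$ beyond $c$. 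Formally, if two $u_0$'s realize $c$, splicing the left half of one history with the right half of the other (Lemma~\ref{splicing}) produces a valid reduction of the first history's starting word, so by determinism the spliced right half must appear somewhere in the first history's own sequence of right-sides. Since the algorithm is length-reducing, this chain has length at most $\ell(V_0^R) \leq N$.

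Combining the bounds, $v_t$ takes at most $|\A|^{2W-2} + C_1 N^{C_2} \cdot N \cdot N \cdot |\A|^{W-1} = C_0 N^C$ values for constants $C_0, C$ depending only on $|\A|$ and $W$, proving the suffix case; the prefix case $v_0 u_0$ is symmetric. The main obstacle is formalizing the deterministic-chain claim for $v_t^R$: one must use the caveat after Lemma~\ref{splicing} (that $v_r^+$ may differ from $[v_r^+]$ by further internal substitutions) to conclude that every realization of $v_t^R$ for class $c$ is a stage in the single reduction sequence obtained by running the algorithm on $[v_t^R]$, rather than a word that truly depends on $u_0$.
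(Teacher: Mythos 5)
Your proposal is correct and follows essentially the same route as the paper's own proof: both dispose of the constant-size short case, apply Lemma~\ref{polysplits} with the path ending $W-1$ letters from the left end of $v_t$, bound the left fragment by $|\A|^{W-1}$, and count the right fragment by (splitting-path class) $\times$ (starting position in $v_0$) $\times$ (position in the length-reducing deterministic chain), yielding $C_1 N^{C_2+2}\cdot|\A|^{W-1}$ plus a constant. Your spelled-out ``deterministic chain'' argument via re-splicing is a more explicit version of the paper's appeal to the fact that $[v_t^+]$ is determined by $v_0^+$ and the path class and that $v_t^+$ lies among the at most $\ell([v_t^+])\le N$ words obtainable from it by further substitutions.
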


\def\rvtbar{[v_t^+]}

\begin{proof}
First consider all $v_t$ such that $\ell(v_t)\geq 2W-1$.  Then by
Lemma~\ref{polysplits}, each $v_0,\ldots,v_t$ has a splitting path
ending $W-1$ letters from the start of $v_t$, in one of at most
$C_1N^{C_2}$ classes.

Since $\rvtbar$ is determined by $v_0^+$ and the class of the splitting
path, and $v_0^+$ has at most $N$ possible values (each being a subword
at the end of $v_0$), $\rvtbar$ takes at most $C_1 N^{C_2+1}$ distinct
values as $u_0$ varies. Since $v_t^+$ is one of the, at most
$\ell(\rvtbar) \leq N$, words obtained by making substitutions in
$\rvtbar$, $v_t^+$ itself can take at most $C_1 N^{C_2+2}$ values.  On
the other hand, since $\ell(v_t^-) = W-1$, this can take at most
$|\A|^{W-1}$ values. Multiplying these two gives the required bound on
the number of values $v_t$ can take when $\ell(v_t)\geq 2W-1$. 

The number of possible words of length less than $2W-1$ is constant
and, since we are assuming $N\geq 1$, we can absorb this into $C_0$.

The proof for $v_t u_t$ is similar. 
\end{proof}

\begin{remark}
{\rm 
If the rewriting system were not required to delete a
letter with every substitution, the number of values $v_t(u_0)$ could
take might well be exponential in $\ell(v_0)$. 
}
\end{remark}

\section{Groups which have no \dehn Algorithm}\label{noDehn}

\def\half{\frac{1}{2}}

\noindent
In this section we use the results of the previous section to exhibit
groups which have no \dehn algorithm.

\begin{theorem} \label{expexp}

Let $G$ be a group with some fixed generating set.
Suppose that for each $n \ge 0$ there are sets $S_1(n) \subset
G$ and $S_2(n) \subset G$ satisfying the following:
\begin{enumerate}
\item For $i=1,2$ each element of $S_i(n)$ can be represented by a
word of exactly length $n$.
\item There are $\alpha_0>0$ and $\alpha_1>1$ so that for infinitely
many $n$
\[ |S_i(n)| \ge \alpha_0 \alpha_1^n.\]
\item Each element of $S_1$ commutes with each element of $S_2$.
\end{enumerate}
Then $G$ has no \dehn algorithm.

\end{theorem}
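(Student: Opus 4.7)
The plan is to derive a contradiction by applying the splitting--path pigeonhole of Section~\ref{splicingEtc} to a family of commutator words, exploiting that $S_1(n)$ and $S_2(n)$ commute elementwise. Suppose $G$ has a \dehn algorithm $(\calg,\A,S)$ with reduction map $R$ and maximum \lh\ side length $W$. Pick a large $n$ from the infinite set on which both $|S_1(n)|,|S_2(n)|\ge \alpha_0\alpha_1^n$, fix $b_0\in S_2(n)$ with length-$n$ representative $v\in\calg^n$, and for each $a\in S_1(n)$ fix a length-$n$ representative $u(a)\in\calg^n$. Form the commutator word
\[ w(a)\;=\;u(a)\,v\,u(a)^{-1}\,v^{-1}\]
of length $4n$; by the commuting hypothesis $w(a)$ represents $[a,b_0]=1$, so $R(w(a))=\epsilon$.

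View $w(a)$ as four length-$n$ subwords $A_a,B,C_a,D$ with $A_a=u(a)$, $B=v$, $C_a=u(a)^{-1}$, $D=v^{-1}$, and observe that $B$ and $D$ do not depend on $a$. Tracking $B$ through the reduction of $w(a)$ under a fixed deletion convention, I would apply Lemma~\ref{polysplits} to the subword $B$: at any chosen time $t(a)$ with $\ell(B_{t(a)})\ge 2W-1$ and any chosen non-border base letter, the resulting splitting path in $B$'s sub-diagram belongs to one of at most $C_1 n^{C_2}$ equivalence classes. Augmenting the type by the (at most $n$) positions at which the path meets the top row of $B$ keeps the bound polynomial in $n$, while the number of available $a$'s is exponential, $|S_1(n)|\ge \alpha_0\alpha_1^n$. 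Pigeonhole therefore produces distinct $a\ne a'\in S_1(n)$ whose histories carry equivalent splitting paths meeting $B$ at the same top position.

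Now apply Lemma~\ref{splicing} to these two histories. Because each splitting path is confined to the $B$-region and splits $B$ at the same position, the spliced word $w(a)^-\,w(a')^+$ is literally
\[ u(a)\,v\,u(a')^{-1}\,v^{-1},\]
which by commuting represents $a b_0(a')^{-1}b_0^{-1}=a(a')^{-1}\ne 1$ since $a\ne a'$. The lemma yields a valid algorithm history for this spliced word; I would then extend it by the still-valid left-of-path substitutions from $w(a)$'s remaining history and the right-of-path substitutions from $w(a')$'s remaining history. These two continuations are independent, since by definition no substitution crosses a splitting path, and each reduces its own side to the empty word. Hence the spliced word reduces under the algorithm all the way to $\epsilon$; by the \dehn algorithm hypothesis it must then represent the identity, contradicting the previous sentence.

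The main obstacle will be verifying that the splicing can indeed be extended past the reference time $t(a),t(a')$ down to full reduction: one must check that the splitting path can be prolonged through the short-length tail of each diagram and that the continued substitutions genuinely remain on their respective sides of this extension. The border-letter bookkeeping and deletion-convention framework of Section~\ref{splicingEtc} is built for precisely such extensions, so the verification should be bookkeeping rather than conceptually new work; a secondary minor point is that $b_0$ should be chosen so that $v=u_2(b_0)$ has enough local variation that equivalence of path details pins down the top position inside $B$.
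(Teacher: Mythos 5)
Your plan correctly identifies the commutator words $u(a)\,v\,u(a)^{-1}\,v^{-1}$ and the splitting-path/splicing machinery, but there is a genuine gap at the point you yourself flag as ``bookkeeping.'' It is not bookkeeping; it is where the argument actually lives. Lemma~\ref{splicing} gives you a valid history for the spliced word only \emph{up to time $t$} --- the time at which the splitting path ends. Past time $t$ there is no path separating the two sides, so a substitution in the tail of $w(a)$'s history that applies to the left of the path's endpoint may nonetheless \emph{read} letters to the right of that endpoint, and those letters now come from $w(a')$'s history, not $w(a)$'s. Consequently the claim that ``these two continuations are independent\ldots\ and each reduces its own side to the empty word'' is false in general: the two tails are not guaranteed to remain valid histories of the spliced word, and nothing forces the spliced word to reduce to $\epsilon$.

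The paper's proof closes exactly this gap by a second pigeonhole that you omit. Writing $u_t v_t x_t y_t$ for the state at a carefully defined stopping time $t$ (the first time $\max\{\ell(v_t),\ell(x_t)\}<3W$, which guarantees one of $v_t,x_t$ has length in $[2W,3W-1]$), the paper first fixes $v_0$ so that for at least half the $u_0\in T_1$ it is $v_t$ that stays long (your arbitrary choice of $b_0$ does not ensure this), then pigeonholes simultaneously on the positioned splitting-path class in $v_t$ \emph{and} on the whole suffix $v_t x_t y_t$. The bound on the number of possible $y_t$ comes from Lemma~\ref{polynomial}, which your proposal never invokes. With $v_t x_t y_t = v_t' x_t' y_t'$, the spliced word at time $t$ is \emph{literally} $u_t v_t x_t y_t$, an intermediate state of the original history for $u_0 v_0 u_0^{-1} v_0^{-1}$; since the algorithm is deterministic and that history ends with the empty word, so does the spliced one. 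No independent-continuation claim is needed, and no extension of the splitting path past $t$ is attempted. Without the $v_t x_t y_t$ pigeonhole (and hence Lemma~\ref{polynomial}) your argument does not reach a contradiction.
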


\begin{proof}

Suppose to the contrary that we have a deterministic \dehn algorithm
for $G$.  Let $\A$ be the working alphabet and let
$W$ be the length of the longest left hand side.
Choose $n>0$ such that $|S_i(n)| \ge \alpha_0
\alpha_1^n$, and 
\[\half\alpha_0\alpha_1^n > C_1n^{C_2 + 2} |\A|^{6W} C_0 n^C,\]
where $C,C_0,C_1$ and $C_2$ are as in Lemmas~\ref{polysplits}
and \ref{polynomial}. 
Let $T_i$ be a set of words of length $n$ representing
$S_i(n)$, for $i=1,2$.


We shall consider the effect of our supposed \dehn algorithm on words
of the form $u_0 v_0 u_0^{-1} v_0^{-1}$, for $u_0 \in T_1$ and $v_0\in
T_2$.  All such words must reduce to the empty word since $S_1(n)$
and $S_2(n)$ commute.  Put $x_0 = u_0^{-1}$ and $y_0 = v_0^{-1}$, and
let $u_t v_t x_t y_t$ denote the result of applying $t$ substitutions
to $u_0 v_0 x_0 y_0$.

Define $t$, as a function of $u_0$ and $v_0$, to be the least integer
$t\geq 0$ such that $\max \{\ell(v_t), \ell(x_t)\} < 3W$. I.e., we run
the algorithm until the first time at which both $v_0$ and $x_0$ have
length less than $3W$.  Then $u_t, v_t, x_t$ and $y_t$ are all well
defined functions of $u_0$ and $v_0$.  Since at most $W$ letters are
deleted in each step it follows that $2W \leq \max \{\ell(v_t),
\ell(x_t)\} \leq 3W - 1$.

For each pair $(u_0,v_0) \in T_1\times T_2$, one or both of the
inequalities $\ell(v_t)\geq \ell(x_t)$, $\ell(x_t)\geq \ell(v_t)$
holds. Therefore one of these inequalities must hold for at least half
of $T_1\times T_2$. We shall suppose
it is the first, and argue to obtain a contradiction; were it the
second, a similar argument, interchanging the roles of $v_t$ and $x_t$,
would give a contradiction instead.

\vspace{0.7em}
{\em Step 1, fix a $v_0$:}
Since we are assuming that, for at least half the pairs $(u_0,v_0)$,
$\ell(v_t)\geq \ell(x_t)$, we can certainly find a $v_0\in T_2$
such that for at least half of $u_0\in T_1$,
$\ell(v_t(u_0,v_0))\geq \ell(x_t(u_0,v_0))$.  We fix this $v_0$ and
henceforth regard $u_t,v_t,x_t,y_t$ as functions of $u_0$ alone. Let
$U = \{u_0\in T_1\ |\ \ell(v_t)\geq \ell(x_t)\}$. 
By the choices we have made
have made, $|U|\geq \half |T_1| \geq \half\alpha_0\alpha_1^n$. 

\vspace{0.7em}
{\em Step 2, split the $v_t$ using boundedly many splitting classes:}
By the definitions of $t$ and $U$, $\ell(v_t) \geq 2W$ for each $u_0 \in
U$.  By Lemma~\ref{polysplits}, we can choose a splitting path for each
$v_0,\ldots,v_t$, using at most $C_1n^{C_2}$ classes.  If we take into
account also the position in $v_0$ at which the path begins, and the
position in $v_t$ at which the path ends, we get at most $C_1n^{C_2 +
2}$ classes.

\vspace{0.7em}
{\em Step 3, the map $u_0 \mapsto v_t x_t y_t$ is many-to-one:}
By the definition of $t$, $\ell(v_t), \ell(x_t)\leq 3W - 1$, so the
number of possible values $v_t x_t$ can take, as $u_0$ ranges over
$U$, is less than $|\A|^{6W}$. By Lemma~\ref{polynomial}, the
number of values $y_t$ can take is at most $C_0 n^C$, for positive constants
$C_0$ and $C$ depending only on $|\A|$ and $W$. 
On the other hand $|U| \geq \half\alpha_0\alpha_1^n$ so, by our choice 
of $n$, $|U| > C_1n^{C_2 + 2}
|\A|^{6W} C_0 n^C$. It follows that there exists a set of at least 
$C_1n^{C_2 + 2}$ $u_0$'s in $U$ which all give the same $v_t x_t y_t$. 

\vspace{0.7em}
{\em Step 4, construct a word that breaks the \dehn algorithm:}
From Step~3, we have more than $C_1n^{C_2 + 2}$ $u_0$'s giving the
same $v_t x_t y_t$. From Step~2, we have at most $C_1n^{C_2 + 2}$
positioned splitting path classes for $v_t$. Therefore we can find
$u_0, u_0'\in U$ such that (writing $v_t'$ for $v_t(u_0')$ etc.), $v_t x_t
y_t = v_t' x_t' y_t'$, and $v_0,\ldots,v_t$ and $v_0',\ldots,v_t'$
contain equivalent splitting paths, starting at the same position in
$v_0 = v_0'$ and ending at the same position in $v_t = v_t'$. 

By Lemma~\ref{splicing}, running the algorithm on $u_0 v_0^-
v_0^{\prime +} x_0' y_0'$ yields $u_t v_t^- v_t^{\prime +} x_t' y_t'$.
Now $u_0 v_0^- v_0^{\prime +} x_0' y_0' = u_0 v_0 x_0' y_0$, which
does not represent the identity in $G$ but rather $u_0 u_0^{\prime
-1}$. On the other hand, $u_t v_t^- v_t^{\prime +} x_t' y_t' = u_t
v_t x_t y_t$ which reduces to the empty word.  Therefore this
rewriting system does not implement a \dehn algorithm for $G$.
\end{proof}

In fact, it is not hard to strengthen this to the following.

\begin{theorem} \label{explin}

Let $G$ be a group with some fixed generating set.
Suppose that for each $n \ge 0$ there are sets $S_1(n) \subset
G$ and $S_2(n) \subset G$ satisfying the following:
\begin{enumerate}
\item For $i=1,2$ each element of $S_i(n)$ can be represented by a
word of exactly length $n$.
\item There are $\alpha_0>0$ and $\alpha_1>1$ and $\alpha_2>0$ so that
for all $n$
\[  |S_1(n)| \ge \alpha_0 \alpha_1^n\]
and
\[ |S_2(n)| \ge \alpha_2 n.\]
\item Each element of $S_1$ commutes with each element of $S_2$.
\end{enumerate}
Then $G$ has no \dehn algorithm.
\end{theorem}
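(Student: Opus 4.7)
My plan is to run the proof of Theorem~\ref{expexp} essentially unchanged, but take $u_0$ of length $n$ and $v_0$ of length $m$, where $m$ is a sufficiently large polynomial in $n$ rather than $m=n$. The exponential lower bound $|T_1(n)|\geq\alpha_0\alpha_1^n$ is what powers the ``main'' case of that proof, and the weaker hypothesis $|S_2(m)|\geq\alpha_2 m$ will be enough for the ``swap'' case because the pigeonhole count there will turn out to be a fixed polynomial in $n$ that is independent of $m$.

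Fix constants $C,D>0$ (with $D$ to be chosen large below), set $n$ large and $m=\lceil Cn^D\rceil$. Let $T_1$ consist of one length-$n$ word per element of $S_1(n)$, and $T_2$ of one length-$m$ word per element of $S_2(m)$, so $|T_1|\geq\alpha_0\alpha_1^n$ and $|T_2|\geq\alpha_2m$. For each pair $(u_0,v_0)\in T_1\times T_2$, feed the word $u_0v_0u_0^{-1}v_0^{-1}$ (which represents the identity by commutativity) to the supposed \dehn algorithm, and run it to the first time $t$ at which $\max\{\ell(v_t),\ell(x_t)\}<3W$. As in the proof of Theorem~\ref{expexp}, at least half of the pairs lie in the ``main'' case $\ell(v_t)\geq\ell(x_t)$, or at least half in the ``swap'' case $\ell(x_t)\geq\ell(v_t)$.

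In the main case I would argue verbatim as in Theorem~\ref{expexp}: by averaging, fix $v_0\in T_2$ with at least $\half|T_1|$ main-case $u_0$'s, pigeonhole these on the signature $(v_tx_t,\,y_t,\,\text{splitting-path class through }v_t)$, bound the signature by $|\A|^{6W}\cdot C_0m^C\cdot C_1m^{C_2+2}$ using Lemmas~\ref{polysplits} and \ref{polynomial}, observe that this polynomial in $m$ (hence in $n$) is beaten by $\half\alpha_0\alpha_1^n$ for $n$ large, and splice via Lemma~\ref{splicing} to get a word representing $u_0u_0^{\prime-1}\ne 1$ that reduces to the empty word.

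The swap case is handled symmetrically, and this is the one where the polynomial choice of $m$ is needed. Fix $u_0\in T_1$ with at least $\half|T_2|$ swap-case $v_0$'s, and pigeonhole these $v_0$'s on the signature $(u_t,\,v_t,\,x_t,\,\text{splitting-path class through }x_t)$. Here $u_t$ has at most $C_0n^C$ values by the ``$v_0u_0$'' form of Lemma~\ref{polynomial} applied with the fixed prefix $u_0$ and the varying suffix $v_0x_0y_0$ (the internal structure of the suffix is irrelevant to that lemma); the pair $(v_t,x_t)$ contributes at most $|\A|^{6W}$ since both have length less than $3W$; and the splitting-path class through $x_t$ (a subword of initial length $n$) contributes at most $C_1n^{C_2+2}$. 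The total signature count is a polynomial in $n$ \emph{independent of $m$}. Choosing $D>C+C_2+2$ makes $\half\alpha_2Cn^D$ exceed this polynomial, so two distinct $v_0,v_0'\in T_2$ share a signature. Splicing as in Lemma~\ref{splicing} then produces the word $u_0v_0u_0^{-1}v_0^{\prime-1}$, which reduces to the empty word but represents $v_0v_0^{\prime-1}\ne 1$ (using that $u_0$ commutes with both $v_0$ and $v_0'$), a contradiction.

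The main obstacle I expect is simply verifying that Lemma~\ref{polynomial} applies to bound $u_t$ in the swap case as described; once the fixed prefix / varying suffix decomposition is accepted, the rest is a polynomial-versus-polynomial bookkeeping with $D$ chosen to make $\alpha_2m$ dominant, rather than the polynomial-versus-exponential bookkeeping of Theorem~\ref{expexp}.
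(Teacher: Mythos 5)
Your proposal is correct and takes essentially the same approach as the paper: the paper's proof likewise decouples the two lengths $n_1,n_2$ (just by requiring them to satisfy the two inequalities $\half\alpha_0\alpha_1^{n_1}>C_1n_2^{C_2+2}|\A|^{6W}C_0n_2^C$ and $\half\alpha_2 n_2>C_1n_1^{C_2+2}|\A|^{6W}C_0n_1^C$ rather than fixing $m$ as an explicit polynomial in $n$), and the symmetric swap-case pigeonhole on $(u_t,v_t,x_t,\text{splitting-path class through }x)$ that you spell out is exactly what the paper compresses into ``arguing similarly.''
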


\begin{proof}
The proof is very similar to that of Theorem~\ref{expexp} except that
we have to start with $v_0$ longer than $u_0$.  As before, suppose
that we have a deterministic \dehn algorithm for $G$, with working
alphabet $\A$, and longest \lhs\ of length $W$.
Choose $n_1$ and $n_2$ such that
\begin{equation} \label{enough.u}
\half \alpha_0\alpha_1^{n_1} > C_1n_2^{C_2 + 2} |\A|^{6W} C_0 n_2^C,
\end{equation}
and 
\begin{equation} \label{enough.v}
\half \alpha_2 n_2 > C_1 n_1^{C_2 + 2} |\A|^{6W} C_0 n_1^C.
\end{equation}
Let $T_i$ be a set of words of length $n_i$ bijecting to $S_i(n_i)$,
for $i=1,2$.


As before, let $u_t v_t x_t y_t$ denote the result of
applying $t$ substitutions to $u_0 v_0 u_0^{-1} v_0^{-1}$.
Define $t$, as a function of $u_0,v_0$,
such that $2W \leq \max \{\ell(v_t), \ell(x_t)\} \leq 3W - 1$.

If, for at least half of $(u_0,v_0) \in T_1\times T_2$,
$\ell(v_t) \geq \ell(x_t)$, we can argue as in Steps~1-4, using 
(\ref{enough.u}), to find $u_0,u_0'$ which break the algorithm. 
In the other case, arguing similarly, using (\ref{enough.v}), we can
find $v_0,v_0'$ which break the algorithm. 
\end{proof}

\begin{theorem}
Suppose $G$ has exponential growth and the center of $G$ contains an
infinite cyclic group.  Then $G$ has no \dehn algorithm.
\end{theorem}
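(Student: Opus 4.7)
The plan is to verify the hypotheses of Theorem~\ref{explin}, with $S_2$ built from powers of an infinite-order central element $c$ and $S_1$ built from the group's exponentially growing ball. The key preliminary step is to replace the given semi-group generating set by one that also contains $c$ and $c^{-1}$; this is permissible by Theorem~\ref{changegens} and ensures both that $\ell(c) = 1$ and that $cc^{-1}$ is a length-$2$ word representing the identity. These features will be crucial for sidestepping a parity obstruction below.

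With this generating set, set $S_2(n) = \{c^m : 0 \le m \le n,\ m \equiv n \pmod{2}\}$. Since $c$ has infinite order these powers are distinct, and each $c^m \in S_2(n)$ admits the length-$n$ representation $c^m (cc^{-1})^{(n-m)/2}$; so $|S_2(n)| \ge \lfloor n/2 \rfloor + 1$, giving the required linear bound $|S_2(n)| \ge \alpha_2 n$, and centrality of $c$ makes the commutation hypothesis automatic. For $S_1(n)$, take the set of elements of $G$ representable by a word of length exactly $n$. Exponential growth gives $|B_n| \ge C\lambda^n$ for some $C > 0$ and $\lambda > 1$; for any $g \in B_n$ with word length $m \le n$, padding by copies of $cc^{-1}$ yields representations of every length in $\{m, m+2, m+4, \dots\}$, so whichever of $n$ and $n-1$ matches the parity of $m$ lies in this set, forcing $g \in S_1(n) \cup S_1(n-1)$. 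Consequently $|S_1(n)| + |S_1(n-1)| \ge C\lambda^n$.

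The main obstacle is upgrading this sum-inequality to a lower bound on $|S_1(n)|$ alone: a priori the exponentially many elements of $B_n$ could all sit in $S_1(n-1)$ rather than $S_1(n)$, a parity phenomenon that would break the whole approach. This is exactly where the choice $\ell(c) = 1$ pays off: the map $g \mapsto cg$ injects $S_1(n-1)$ into $S_1(n)$, because if $w$ represents $g$ with $\ell(w) = n-1$ then $wc$ represents $cg$ with length $n$, and the map is injective since $c$ has infinite order. Therefore $|S_1(n)| \ge |S_1(n-1)|$, and combining with the previous inequality gives $|S_1(n)| \ge (C/2)\lambda^n$ for all $n$. With both size conditions of Theorem~\ref{explin} verified, that theorem concludes $G$ has no \dehn algorithm.
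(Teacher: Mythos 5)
Your proof is correct, but it takes a noticeably different route from the paper's. The paper sidesteps the parity issue entirely by enlarging the semi-group generating set to include a letter $r$ that denotes the \emph{identity} element of $G$ (as well as a letter $z$ for the central element). With $r$ available, any element of the ball $B(n)$ that has a geodesic word of length $k\le n$ can be padded to a word of length \emph{exactly} $n$ simply by appending $r^{n-k}$; one may thus take $S_1(n)=B(n)$ and $S_2(n)=\{z^k : |k|\le n\}$ directly and invoke Theorem~\ref{explin}. Your version instead pads by pairs $cc^{-1}$, which changes length only in steps of two and forces the parity bookkeeping. You then repair this with the observation that left multiplication by $c$ injects $S_1(n-1)$ into $S_1(n)$, giving $|S_1(n)|\ge |S_1(n-1)|$ and hence $|S_1(n)|\ge\tfrac12|B(n)|$. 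That is a legitimate fix, just heavier machinery than the paper's one-line device.

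Two small points. First, the injectivity of $g\mapsto cg$ has nothing to do with $c$ having infinite order --- left multiplication by any group element is a bijection of $G$ --- so the justification you give is wrong even though the conclusion is right (the infinite-order hypothesis is what makes $S_2(n)$ large, not what makes the shift injective). Second, it is worth being explicit that adding $c$ and $c^{-1}$ (or, in the paper's version, $z$ and $r$) changes only the \emph{presentation} of the generating set and not the growth type, and that Theorem~\ref{changegens} lets one change generating set without affecting the existence of a Cannon's algorithm --- you do say the latter, and both facts are standard, but the first deserves a word. The paper's $r$-trick is worth internalizing: it makes the bound on $S_1$ a purely growth-theoretic statement needing no algebraic input, reserving the central element solely for $S_2$.
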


\begin{proof}
We take among our generators for $G$ a letter $z$ denoting a central
element of infinite order and a letter $r$ denoting the identity.  We
can then Take $S_1(n) = B(n)$, for if $u$ is a geodesic denoting an
element of length $k \le n$, we can denote this element by
$ur^{n-k}$.  Thus each element of $B(n)$ is represented by a word of
length $n$.  Likewise, we can take $S_2 = \{z^k \mid |k| \le n \}$.
We then apply Theorem~\ref{explin}.
\end{proof}

\begin{corollary}
$F_2 \times \Z$ has no \dehn algorithm. \qed
\end{corollary}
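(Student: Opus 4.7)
The plan is to obtain this as an immediate consequence of the preceding theorem, which says that any group with exponential growth whose center contains an infinite cyclic subgroup has no \dehn algorithm. So I need to verify these two hypotheses for $G = F_2 \times \Z$.

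For exponential growth, I would observe that $F_2$ sits as a subgroup (and a quotient) of $F_2 \times \Z$. The free group on two generators has exponential growth with respect to any finite generating set, and the inclusion $F_2 \hookrightarrow F_2 \times \Z$ is a quasi-isometric embedding onto the first factor, so the number of elements of $F_2 \times \Z$ of length at most $n$ grows at least as fast as the corresponding count in $F_2$. Hence $F_2 \times \Z$ has exponential growth.

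For the center, the subgroup $\{1\} \times \Z \subseteq F_2 \times \Z$ commutes with every element of $F_2 \times \Z$ (since $\Z$ is abelian and distinct direct factors commute). This subgroup is infinite cyclic, so it furnishes the required central infinite cyclic subgroup.

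With both hypotheses verified, the previous theorem applies and gives the conclusion. The only subtlety worth flagging is that the generating set used in the previous theorem is arbitrary, so we may pick any convenient finite generating set for $F_2 \times \Z$; by Theorem~\ref{changegens}, the nonexistence of a \dehn algorithm does not depend on this choice. There is no real obstacle here—the work was all done in the earlier theorem, and this corollary is essentially a one-line verification.
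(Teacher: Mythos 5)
Your proposal is correct and matches the paper's intent exactly: the corollary is stated with \qed precisely because it is an immediate application of the preceding theorem, and your verification of the two hypotheses (exponential growth via the $F_2$ factor, and $\{1\}\times\Z$ as a central infinite cyclic subgroup) is the routine check the authors leave to the reader.
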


\begin{corollary}
If $G$ is a braid group of 3 or more strands, $G$ has no \dehn
algorithm.  \qed
\end{corollary}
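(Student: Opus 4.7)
The plan is to reduce this immediately to the preceding theorem, which says that any group of exponential growth with an infinite cyclic central subgroup admits no \dehn algorithm. Thus all I need to check is that the braid group $B_n$ on $n\geq 3$ strands satisfies both hypotheses.

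For the center, I would cite the classical theorem of Chow (and of Garside) that for $n\geq 3$ the center of $B_n$ is infinite cyclic, generated by the full twist $\Delta^2 = (\sigma_1\sigma_2\cdots\sigma_{n-1})^n$. This is standard and needs no re-derivation here.

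For exponential growth, the cleanest argument is to exhibit a free subgroup of rank $2$. For $n=3$, the quotient $B_3/Z(B_3)\cong PSL(2,\Z)$ contains a free subgroup of rank $2$, so $B_3$ itself contains such a subgroup (lifting generators, since the center is torsion-free one gets a rank-$2$ free group in $B_3$). For $n\geq 4$, one can either use the inclusion $B_3\hookrightarrow B_n$ induced by $\sigma_i\mapsto \sigma_i$, or directly note that any two generators $\sigma_i,\sigma_j$ with $|i-j|\geq 2$ commute and the remaining generators already yield a non-elementary subgroup; the simplest citation is Gorin--Lin or just the general fact that $B_n$ is not virtually solvable for $n\geq 3$. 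Either way, $B_n$ contains a rank-$2$ free subgroup, hence has exponential growth.

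With both hypotheses verified, the previous theorem applies and gives the conclusion. No step here looks like a real obstacle: both facts about $B_n$ are standard, and the previous theorem does all the actual work. The only mild subtlety is making sure the cited center is indeed \emph{infinite} cyclic (as opposed to merely having some central element) and that the exponential growth holds with respect to some finite generating set of $G$ (Theorem~\ref{expexp}/\ref{explin} is stated for a fixed generating set, but exponential growth is a generating-set-independent property, so this is automatic).
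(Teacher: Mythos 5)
Your proposal is correct and matches the paper's intent: the corollary is stated with a \qed precisely because it follows immediately from the preceding theorem, once one knows (as you verify) that $B_n$ for $n\geq 3$ has exponential growth and infinite cyclic center. Your citations of Chow/Garside for the center and a rank-$2$ free subgroup for exponential growth are exactly the standard facts the authors are implicitly invoking.
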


\begin{corollary}
If $M$ is a graph manifold one of whose pieces is a non-closed
Seifert fibered space, then $\pi_1(M)$ has no \dehn algorithm. \qed
\end{corollary}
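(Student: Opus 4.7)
The plan is to apply the preceding theorem---that a group with exponential growth whose center contains an infinite cyclic subgroup admits no \dehn algorithm---to a suitable finitely generated subgroup of $\pi_1(M)$, and then descend via Theorem~\ref{subgroups}.

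First I would single out the non-closed Seifert fibered piece $N$ of $M$ and invoke the standard $\pi_1$-injectivity of pieces of a graph manifold, which embeds $\pi_1(N)$ into $\pi_1(M)$. Because $N$ is non-closed the regular fiber represents an element of infinite order; to make it central I would pass to a finite cover $\tilde N \to N$ on which the Seifert fibration is orientable. Setting $H := \pi_1(\tilde N)$, the fiber then generates an infinite cyclic central subgroup $\langle z \rangle \leq Z(H)$, and via the embedding $H$ is a finitely generated subgroup of $\pi_1(M)$.

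The main substantive step is verifying that $H$ has exponential growth. Since $N$ is non-closed its base orbifold $O$ has non-empty boundary. Excluding the degenerate possibilities---$N$ a solid torus, $T^2 \times I$, or twisted $I$-bundle over the Klein bottle---$\pi_1^{\mathrm{orb}}(O)$ contains a non-abelian free subgroup and hence has exponential growth. Then $H$, being a central extension of a finite index subgroup of $\pi_1^{\mathrm{orb}}(O)$ by $\langle z \rangle$, also has exponential growth. The preceding theorem applied to $H$ gives that $H$ has no \dehn algorithm, whereupon Theorem~\ref{subgroups} rules out a \dehn algorithm for $\pi_1(M)$.

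The main obstacle is to confirm that the degenerate cases just listed can be safely excluded under the convention for ``piece'' in force: in any standard decomposition of a graph manifold these trivial pieces are absorbed into their neighbours and do not arise as pieces in their own right. Beyond this, the only inputs are classical 3-manifold facts---$\pi_1$-injectivity of pieces, and the existence of a finite cover on which a given Seifert fibration becomes orientable---together with the observation that both exponential growth and the existence of a central infinite cyclic subgroup pass from a finite index subgroup of a central extension down to the whole extension.
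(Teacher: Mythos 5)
Your argument is correct and follows the route the paper intends: the corollary is stated with only a \qed\ and is meant as an immediate consequence of the preceding theorem (exponential growth plus infinite cyclic center) together with Theorem~\ref{subgroups} and $\pi_1$-injectivity of JSJ pieces. The paper's intermediate corollary that $F_2\times\Z$ has no \dehn\ algorithm gives a marginally quicker phrasing --- a non-closed Seifert piece with hyperbolic base contains $F_2\times\Z$ (a free subgroup of the base lifted, after an index-two pass to trivialize the monodromy, together with the fiber) --- but this is the same argument in different clothing, and your worry about excluding the degenerate small Seifert pieces is exactly the right thing to flag, since those are ruled out by the standing convention on JSJ pieces.
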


\begin{corollary}
If $M$ is a closed 3-manifold modelled on either $\HTwo \times
\R$ or $PSL_2(\R)$, then $\pi_1(M)$ has no \dehn algorithm.  \qed
\end{corollary}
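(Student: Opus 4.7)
The plan is to invoke the immediately preceding theorem (no \dehn algorithm when the group has exponential growth and its center contains an infinite cyclic subgroup), possibly after passing to a finite-index subgroup. A closed $3$-manifold modelled on $\HTwo\times\R$ or on (the universal cover of) $PSL_2(\R)$ is Seifert fibered over a closed hyperbolic $2$-orbifold $B$, so its fundamental group fits into a short exact sequence
\[
1 \to \Z \to \pi_1(M) \to \pi_1^{\mathrm{orb}}(B) \to 1,
\]
where the kernel is generated by a regular fiber.  I claim that this $\Z$ lies in the center of $\pi_1(M)$, or at worst in the center of a finite-index subgroup: for $\HTwo\times\R$ the $\R$-factor commutes with $\mathrm{Isom}(\HTwo)$, and for $\widetilde{PSL_2(\R)}$ the fiber subgroup is the center of the model group.

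For exponential growth, $\pi_1^{\mathrm{orb}}(B)$ is virtually a closed hyperbolic surface group, so it has exponential growth.  A word of length at most $n$ in the quotient lifts to a word of length at most $n$ in $\pi_1(M)$, so $|B_{\pi_1(M)}(n)| \ge |B_{\pi_1^{\mathrm{orb}}(B)}(n)|$, and $\pi_1(M)$ therefore also has exponential growth.

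Both hypotheses of the preceding theorem are thereby met by $\pi_1(M)$ (or, if desired, by a torsion-free finite-index subgroup $H$ where centrality is manifest), so that group has no \dehn algorithm; in the finite-index case Theorem~\ref{subgroups} transfers the conclusion back to $\pi_1(M)$ itself.  The only step that might require justification is the centrality of the fiber, which is classical for these two geometries; the cleanest way to finesse it is to pass to a finite-index subgroup of $\pi_1(M)$ that lies in the identity component of the isometry group of the model, where the Seifert fiber is immediately seen to be central.
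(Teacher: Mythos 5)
Your proof is correct and uses exactly the approach the paper intends: the corollary is placed immediately after the theorem on exponential growth with infinite cyclic center, and the paper's \verb'\qed' signals that the authors view it as a direct application (via the Seifert fibration with central fiber over a hyperbolic base orbifold, passing to a finite-index subgroup if needed and transferring back by the contrapositive of Theorem~\ref{subgroups}). Your write-up supplies the 3-manifold details the paper leaves implicit.
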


\begin{corollary}
Thompson's group $F$ has no \dehn algorithm.
\end{corollary}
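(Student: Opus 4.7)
The plan is to apply Theorem~\ref{expexp}. Realize $F$ as a group of piecewise linear homeomorphisms of $[0,1]$, and let $H_1,H_2\le F$ be the subgroups of elements supported in $[0,1/2]$ and $[1/2,1]$ respectively. Each $H_i$ is isomorphic to $F$, and because the supports are disjoint, every element of $H_1$ commutes pointwise with every element of $H_2$.

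Fix a finite generating set $\calg$ for $F$ and, as in the preceding corollaries, augment it with a letter $r$ representing the identity (by Theorem~\ref{changegens} this does not change whether $F$ has a \dehn algorithm). Let $S_i(n)$ denote the intersection with $H_i$ of the $\calg$-ball of radius $n$ in $F$; every such element admits a representative of length exactly $n$ over $\calg\cup\{r\}$ by padding a geodesic word with copies of $r$, and $S_1(n)$ commutes with $S_2(n)$ since $H_1$ commutes with $H_2$.

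The central step is to establish $|S_i(n)|\ge \alpha_0 \alpha_1^n$ for some $\alpha_0>0$ and $\alpha_1>1$. I would derive this from two ingredients: (i) $F$ has exponential growth in any finite generating set, so each $H_i\cong F$ has exponential growth in its own word metric; and (ii) the embeddings $H_i\hookrightarrow F$ are undistorted, a known property of the subgroups of $F$ determined by subintervals of $[0,1]$. Combining (i) and (ii) gives the required exponential lower bound, whereupon Theorem~\ref{expexp} delivers the conclusion.

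The main obstacle is ingredient (ii); the undistortion of these subgroups of $F$ is not elementary and would have to be cited. A cleaner fallback is to invoke Theorem~\ref{explin} instead, taking $S_2(n) = \{g^k : |k|\le n/C\}$ for a fixed infinite-order element $g\in H_2$ with $C$ an upper bound on its $F$-word length; then $S_2(n)$ has linear growth trivially and undistortion is only needed for a single copy of $F$.
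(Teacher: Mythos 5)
Your construction and the paper's are mathematically the same: both use the fact that $F$ contains two commuting isomorphic copies $H_1,H_2$ of itself (equivalently, a subgroup isomorphic to $F\times F$) and feed them into the commuting-sets criterion. The one place where you and the paper diverge is in how the exponential lower bound on $|S_i(n)| = |H_i\cap B_F(n)|$ is justified, and here your stated worry is a red herring. You do not need undistortion of $H_i$ in $F$. Undistortion would give the hard direction $\ell_{H_i}(h)\le C'\,\ell_F(h)$, which would bound $|H_i\cap B_F(n)|$ from \emph{above}; for a \emph{lower} bound all you need is the trivial Lipschitz inequality $\ell_F(h)\le C\,\ell_{H_i}(h)$ (with $C$ the maximum $F$-length of an $H_i$-generator), which gives $B_{H_i}(\lfloor n/C\rfloor)\subseteq H_i\cap B_F(n)$. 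Since $H_i\cong F$ has exponential intrinsic growth, $|H_i\cap B_F(n)|$ is exponential in $n$ with no distortion hypothesis at all, and Theorem~\ref{expexp} applies directly; the fallback via Theorem~\ref{explin} is unnecessary.

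The paper packages this slightly differently: it invokes Theorem~\ref{subgroups} to reduce the question to $F\times F$, where $S_1$ and $S_2$ are simply the balls in the two factors under the product generating set, and the exponential growth of each $S_i$ is immediate without even mentioning Lipschitz constants. That is the step you are missing, and it is what lets the paper's proof be a one-liner. Your version, once the undistortion worry is dropped, is a correct unpacking of the same argument; the paper's use of the subgroup closure property is just a cleaner way to organize it.
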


\begin{proof}
Thompson's group $F$ has exponential growth \cite{ThGp} and contains a
subgroup isomorphic to the direct product of two copies of itself.
\end{proof}

We will say that a subgroup $A$ of $G$ has exponential growth in $G$
if $A \cap B(n)$ has exponential growth.

\begin{theorem}
Suppose $G$ has an abelian subgroup which has exponential growth in
$G$.  Then $G$ has no \dehn algorithm.
\end{theorem}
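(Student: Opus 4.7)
The plan is to reduce directly to Theorem~\ref{expexp}. Let $A$ be the abelian subgroup of $G$ with exponential growth in $G$, meaning there exist $\alpha_0 > 0$ and $\alpha_1 > 1$ such that $|A \cap B(n)| \geq \alpha_0\alpha_1^n$ for infinitely many $n$ (where $B(n)$ denotes the ball of radius $n$ in $G$).

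By Theorem~\ref{changegens}, we may freely enlarge the generating set, so we include among the generators of $G$ a letter $r$ representing the identity element. Then any group element represented by a word of length at most $n$ can also be represented by a word of exactly length $n$ by padding on the right with $r$'s. Accordingly, I define $S_1(n) = S_2(n)$ to be the set $A \cap B(n)$, and observe that condition (1) of Theorem~\ref{expexp} is satisfied by this padding trick. Condition (2) is immediate from the exponential growth hypothesis. Condition (3), that each element of $S_1$ commutes with each element of $S_2$, holds because $S_1, S_2 \subseteq A$ and $A$ is abelian. Theorem~\ref{expexp} now yields the conclusion that $G$ admits no \dehn algorithm.

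There is no real obstacle here: the content was already packaged in Theorem~\ref{expexp}, and abelianness of $A$ is exactly what lets us take $S_1 = S_2$ inside $A$ while still having the commutation hypothesis. The only points worth checking, both trivial, are that adjoining an identity-labelled generator does not alter whether $G$ has a \dehn algorithm (this is Theorem~\ref{changegens}) and that the padding does indeed produce words of exactly length $n$ representing the elements of $A \cap B(n)$.
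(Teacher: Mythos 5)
Your proof is correct and follows the same route as the paper: take $S_1(n)=S_2(n)=A\cap B(n)$ and apply Theorem~\ref{expexp}. The paper's own proof is a one-liner and leaves the padding-with-an-identity-generator trick implicit, but you have correctly filled in that detail (it appears explicitly in the preceding theorem's proof).
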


\begin{proof}
In this case we take $S_1(n)=S_2(n)=A \cap B(n)$ and apply
Theorem~\ref{expexp}. 
\end{proof}

\begin{corollary}
If $G$ is a Baumslag-Solitar group
\[\langle a,t \mid ta^pt^{-1} = a^q \rangle \]
with $p \ne \pm q$ then $G$ has no \dehn algorithm.
\end{corollary}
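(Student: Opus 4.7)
The plan is to invoke the previous theorem, which asserts that a group with an abelian subgroup of exponential growth in $G$ admits no \dehn algorithm. Concretely, I would take $A=\langle a\rangle\le G$ -- infinite cyclic, since Britton's lemma forces $a$ to have infinite order when $p,q\neq 0$ -- and exhibit short words realizing exponentially many distinct elements of $A$.

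The mechanism exploits the defining relation $ta^pt^{-1}=a^q$: conjugation by $t$ acts on (rational) exponents of $a$ as multiplication by $q/p$, and the hypothesis $p\neq\pm q$ guarantees $|q/p|\neq 1$, producing genuine expansion. In the cleanest case $p=1$ (so $|q|\geq 2$), one has $t^i a t^{-i}=a^{q^i}\in A$, and telescoping yields
\[
a^{b_0}\, t\, a^{b_1}\, t\, a^{b_2}\cdots t\, a^{b_m}\, t^{-m}\;=\;a^{b_0+b_1 q+\cdots+b_m q^m}
\]
for any digits $b_i\in\{0,1,\ldots,|q|-1\}$. Each such word has length at most $(|q|+1)m+|q|$, and distinct digit sequences give distinct exponents by uniqueness of base-$|q|$ expansion; hence $|A\cap B_G(n)|\gtrsim |q|^{n/(|q|+1)}$, which is exponential. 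The case $|q|=1$ follows symmetrically from $t^{-1}a^qt=a^p$.

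The hard part is $|p|,|q|\geq 2$. Here $tat^{-1}\notin\langle a\rangle$, so the simple telescoping above breaks down, and in fact $\langle a\rangle$ has only polynomial distortion in $G$ (of degree $\log_{|p|}|q|$). One must replace the base-$|q|$ construction with a more delicate one that still produces exponentially many distinct elements of $\langle a\rangle$ with short word representations; the relevant input is the $\Z$-linear independence of the rational powers $(q/p)^n$, a consequence of $|q/p|\neq 1$. Verifying that the resulting integer linear combinations remain distinct and land inside $\langle a\rangle$ (i.e.\ have integer exponents) is the main technical content of the proof, after which the previous theorem applies directly.
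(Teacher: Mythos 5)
Your strategy is the same as the paper's: take $A = \langle a\rangle$ (infinite cyclic by Britton's lemma) and invoke the preceding theorem once $\langle a\rangle$ is shown to have exponential growth in $G$. The $p=1$ telescoping is fine. But your treatment of $|p|,|q|\ge 2$ contains a genuine error, and one that is also internally inconsistent with the rest of your plan: you assert that $\langle a\rangle$ has only polynomial distortion in $G$ (of degree $\log_{|p|}|q|$), yet then propose to ``produce exponentially many distinct elements of $\langle a\rangle$ with short word representations.'' These cannot both hold --- polynomial distortion of a cyclic subgroup forces $|\langle a\rangle\cap B(n)|$ to grow only polynomially, and the theorem would not apply.

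In fact $\langle a\rangle$ is exponentially distorted in every case with $|p|\ne|q|$, and the construction is uniform; no case split is needed, nor does one need divisibility by powers of $p$ as in the na\"{\i}ve telescoping. Assume, replacing $t$ by $t^{-1}$ if necessary, that $|q|>|p|\ge 1$; given $N$, write $N = qm + r$ with $|r|<|q|$. Since $ta^{pm}t^{-1}=a^{qm}$,
\[
a^N \;=\; t\,a^{pm}\,t^{-1}\,a^{r},\qquad |pm| \le \tfrac{|p|}{|q|}\bigl(|N|+|r|\bigr),
\]
so $\ell(a^N)\le \ell(a^{pm})+|q|+2$ and the exponent shrinks by roughly the factor $|p/q|<1$ at each step. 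Iterating $O(\log|N|)$ times drives the exponent to a bounded value and yields $\ell(a^N)=O(\log|N|)$, which is exactly the paper's assertion that $\ell(a^n)=O(\ln n)$; exponential growth of $\langle a\rangle$ in $G$ follows at once. The remainders $r$, each costing only $O(|q|)$ letters, are precisely what let the exponent decay geometrically even when $|p|\ge 2$ --- this is the observation your proposal is missing.
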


\begin{proof}
It is not hard to see that $\ell(a^n) = O(\ln n)$ and consequently,
$\langle a \rangle$ has exponential growth in $G$.
\end{proof}

\begin{corollary}
Suppose $M$ is a closed 3-manifold modelled on solvegeometry.  Then
$\pi_1(M)$ has no \dehn algorithm.
\end{corollary}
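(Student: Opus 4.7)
The plan is to identify an abelian subgroup of $\pi_1(M)$ which has exponential growth in the ambient group, and then invoke the theorem proved just above (any group with an abelian subgroup of exponential growth in it admits no \dehn algorithm).

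First I would recall the structure of closed solvegeometry 3-manifolds. Up to finite cover, such an $M$ is a torus bundle over $S^1$ with Anosov monodromy, so a finite index subgroup of $\pi_1(M)$ is of the form $H = \Z^2 \rtimes_A \Z$ where $A \in SL_2(\Z)$ has two distinct real eigenvalues $\lambda, \lambda^{-1}$ with $|\lambda|>1$. Writing $a, b$ for standard generators of the $\Z^2$ fiber and $t$ for the $\Z$ base, conjugation by $t$ acts on $\Z^2$ as $A$. Because the $\Z^2$ subgroup is characteristic (indeed, it is the intersection of all polycyclic series in the center), it sits inside $\pi_1(M)$ as a $\Z^2$, so it suffices to prove that $\Z^2$ has exponential growth inside $H$; by Theorem~\ref{subgroups} (applied contrapositively), if $H$ has no \dehn algorithm then neither does $\pi_1(M)$.

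Next I would bound the ambient word length of elements of $\Z^2$. For any $v \in \Z^2$ we have $t^n v t^{-n} = A^n v$, a relation of length $2n + \ell(v)$ in the $\{a,b,t\}^{\pm}$ word metric. Choosing $v$ to be a non-zero lattice vector and noting that $|A^n v| \asymp |\lambda|^n$ (up to a multiplicative constant coming from the projection of $v$ onto the expanding eigenline, which is non-zero because the eigenvectors of $A$ are irrational), we obtain an element of the $\Z^2$ subgroup whose $\Z^2$-norm is $\asymp |\lambda|^n$ but whose $\pi_1$-length is at most $2n + O(1)$. Running $v$ over a fundamental-domain family and $n$ up to $N/3$, one sees that $\Z^2 \cap B(N)$ grows at least like $|\lambda|^{N/3}$, which is exponential in $N$. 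Thus $\Z^2$ is an abelian subgroup of $H$ with exponential growth in $H$.

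Finally, apply the previous theorem to conclude that $H$ (and hence $\pi_1(M)$) has no \dehn algorithm. The only mildly technical step is the eigenvector/lattice point argument bounding $|A^n v|$ from below; this is standard hyperbolic-matrix linear algebra and is the main place where the Anosov hypothesis on $A$ (rather than merely $|{\rm tr}\,A|\geq 2$) is used. Everything else is bookkeeping with the finite-index passage and a direct appeal to the previously established machinery.
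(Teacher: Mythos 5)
Your proof is correct and follows essentially the same route as the paper: identify the $\Z^2$ fiber subgroup of the (finite-index) torus bundle group, verify via the Anosov eigenvalue $|\lambda|>1$ that it has exponential growth in the ambient group, and invoke the theorem just proved. The paper packages the finite-index step by observing directly that $A=\Z^2$ has exponential growth in $\pi_1(M)$ (since word metrics of $H$ and $\pi_1(M)$ are comparable), whereas you route through Theorem~\ref{subgroups} applied to $H \le \pi_1(M)$; these are equivalent, and your extra care about the irrational eigendirections is a reasonable elaboration of the paper's one-line "it follows."
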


\begin{proof}
In this case $\pi_1(M)$ contains a finite index subgroup of the form
$A \rtimes \Z$ where $A$ is isomorphic to $\Z^2$ and the
action of the generator of $\Z$ has eigenvalues $\lambda$ and
$\lambda^{-1}$ with the modulus of $\lambda$ greater than 1.  It
follows that $A$ has exponential growth in $\pi_1(M)$.
\end{proof}

Combining these with our results on virtually nilpotent and
geometrically finite groups we have:

\begin{theorem}
Suppose $M$ is a graph manifold.  Then $\pi_1(M)$ has a \dehn
algorithm if and only if none of the following hold:
\begin{enumerate}
\item $M$ is closed $\HTwo \times \R$, $PSL_2(\R)$ or
solvegeometry manifold, or
\item $M$ has a non-closed Seifert fibered piece.
\end{enumerate}
\end{theorem}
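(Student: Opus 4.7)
My approach is to match each case of the theorem to a prior result; the argument is essentially a bookkeeping exercise using the JSJ decomposition of $M$ together with the catalogue of positive and negative results already proved in Sections~\ref{nilpt}, \ref{geofin} and \ref{noDehn}.

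First I would handle the ``only if'' direction. If $M$ is a closed manifold modelled on $\HTwo\times\R$, on $PSL_2(\R)$, or on solvegeometry, then the three corresponding corollaries recorded above give that $\pi_1(M)$ has no \dehn algorithm. If $M$ has a non-closed Seifert fibered piece, the corollary dealing with non-closed Seifert pieces in graph manifolds applies directly. So (1) or (2) implies that $\pi_1(M)$ has no \dehn algorithm.

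For the ``if'' direction I would case-split on the JSJ decomposition of $M$. If $M$ has at least two JSJ pieces, then every piece has non-empty toroidal boundary, so no piece is closed. Hypothesis (2) excludes Seifert pieces with boundary, hence every piece is hyperbolic and Corollary~\ref{hypgphmflds} supplies a \dehn algorithm for $\pi_1(M)$. If instead $M$ has a single JSJ piece then $M$ is closed and geometric; assumption (1) eliminates the geometries $\HTwo\times\R$, $PSL_2(\R)$ and Sol, leaving $S^3$, $S^2\times\R$, $E^3$, Nil, $\HTwo$ and $\H^3$. The first four give a virtually nilpotent (indeed finite, virtually abelian, or a virtual Heisenberg lattice) fundamental group, so Theorem~\ref{virtuallyNilpotent} provides a \dehn algorithm. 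The last two give a word hyperbolic $\pi_1(M)$, whose classical \dehn algorithm is the original one of Dehn.

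The main obstacle is purely organisational: I must check that the geometric classification of closed 3-manifolds covers every case allowed by the two negative hypotheses, and that in the multi-piece situation the Seifert-or-hyperbolic dichotomy for JSJ pieces lets me conclude every piece must be hyperbolic once (2) rules out Seifert pieces with boundary. Both facts are standard consequences of the JSJ/Geometrization picture, so no new substantive argument is required beyond invoking the catalogue.
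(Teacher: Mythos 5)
Your strategy matches the paper's implicit one: the paper states this theorem immediately after the catalogue of corollaries and expects the reader to fit the cases together exactly as you do, so the method of proof is the same. However, your case analysis has a genuine gap and a slip of terminology.

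The gap: you assert that if $M$ has a single JSJ piece then $M$ is closed. This is false. A graph manifold consisting of a single piece can have non-empty toroidal boundary, for instance a one-cusped hyperbolic manifold or a Seifert fibered space with boundary. The latter is excluded by hypothesis~(2), but the former must be covered explicitly by Corollary~\ref{hypgphmflds} (equivalently Corollary~\ref{geofingroup}), not by the geometric classification of closed manifolds. So the ``single piece'' branch needs a further split into bounded and closed subcases before you appeal to the list of closed geometries.

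The slip: once you exclude $\HTwo\times\R$, $\widetilde{PSL_2(\R)}$ and Sol, there are five remaining geometries for a closed 3-manifold, namely $S^3$, $S^2\times\R$, $E^3$, Nil and $\H^3$; there is no 3-manifold geometry modelled on $\HTwo$, and including it both miscounts the list and muddles the split between the virtually nilpotent cases and the hyperbolic case. You might also note that for the closed single-piece branch you are implicitly invoking the fact that a closed JSJ-trivial graph manifold is geometric (i.e.\ it is either Seifert fibered, hence one of the six non-hyperbolic Seifert geometries, or hyperbolic); this is exactly the sort of ``standard fact'' you flag at the end, so it is not a real obstruction, but it is worth stating, since Sol manifolds (which contain essential tori) are not JSJ-trivial and are actually already excluded by~(2) as well as by~(1). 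Once these two points are patched the argument is correct and coincides with what the paper intends.
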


\let\bibname\relax

\end{document}